\newcommand{\Var}{\mbox{\rm var}}
\newcommand{\Cov}{\mbox{\rm cov}}
\newcommand{\cum}{\mbox{\rm cum}}
\def \R{\mathbb{R}}
\def \IR{\mathbb{R}}
\def \E{\mathbb{E}}
\def \IN{\mathbb{N}}
\def \IZ{\mathbb{Z}}
\newcommand{\bea}{\begin{eqnarray}}
\newcommand{\ena}{\end{eqnarray}}
\newcommand{\beq}{\begin{equation}}
\newcommand{\enq}{\end{equation}}
\newcommand{\beas}{\begin{eqnarray*}}
	\newcommand{\enas}{\end{eqnarray*}}
\def\numberlikeadb{\global\def\theequation{\thesection.\arabic{equation}}}
\theoremstyle{plain}
\newtheorem{theorem}{Theorem}[section]
\newtheorem{lemma}[theorem]{Lemma}
\newtheorem{proposition}[theorem]{Proposition}
\newtheorem{remark}[theorem]{Remark}
\newtheorem{assumption}[theorem]{Assumption}
\theoremstyle{remark}
\newcommand{\vO}[2]{{\color{black}{\bm #1}(#2)}}
\newcommand{\cO}[3]{{\color{black}#1_{#2}(#3)}}
\newcommand{\cP}[3]{{\color{black}{#1}_{#2}(#3)}}
\newcommand{\cPpre}[3]{{\color{black}\hat #1^*_{#2}(#3)}}
\newcommand{\cPest}[3]{{\color{black}\hat #1_{#2}(#3)}}
\newcommand{\seq}[1]{{\color{black}\{#1\}}}
\newcommand\code{\bgroup\@makeother\_\@makeother\%\@makeother\~\@makeother\$\@codex}
\def\@codex#1{{\normalfont\ttfamily\hyphenchar\font=-1 #1}\egroup}
\let\proglang=\textsf
\begin{document}
\onehalfspacing

\title{Wasserstein distance bounds on the normal approximation of empirical autocovariances and cross-covariances under non-stationarity and stationarity}

\author{Andreas Anastasiou$^1$ and Tobias Kley$^2$}

\footnotetext[1]{~ Department of Mathematics and Statistics, University of Cyprus, P.O. Box: 20537, 1678, Nicosia, Cyprus.}
\footnotetext[2]{~ Institute for Mathematical Stochastics,
	Georg-August-University of G{\"o}ttingen, Goldschmidtstra{\ss}e 7, 37077 G{\"o}ttingen, Germany.}
%
\date{}
\maketitle

\begin{abstract}
The autocovariance and cross-covariance functions naturally appear in many time series procedures (e.\,g., autoregression or prediction). Under assumptions, empirical versions of the autocovariance and cross-covariance are asymptotically normal with covariance structure depending on the second and fourth order spectra. Under non-restrictive assumptions, we derive a bound for the Wasserstein distance of the finite sample distribution of the estimator of the autocovariance and cross-covariance to the Gaussian limit. An error of approximation to the second-order moments of the estimator and an $m$-dependent approximation are the key ingredients in order to obtain the bound. As a worked example, we discuss how to compute the bound for causal autoregressive processes of order 1 with different distributions for the innovations. To assess our result, we compare our bound to Wasserstein distances obtained via simulation.
\end{abstract}

\noindent\textbf{MSC 2010 subject classifications:} Primary 62E17; secondary 62F12.\\
\textbf{Keywords:} Autocovariance, time series, Wasserstein distance, Stein’s method.

\section{Introduction}
\label{sec:Introduction}
Assessing the quality of various asymptotic results has attracted a lot of interest in recent years. One way to measure the error in distributional approximations is to consider explicit upper bounds on the Wasserstein distance between the limiting and the actual distribution of the quantity of interest; to derive such bounds is undoubtedly a technically tedious task.

We consider the empirical autocovariance and cross-covariance
\begin{enumerate}
	\item without assuming stationarity, and
	\item for the case of weakly stationary time series.
\end{enumerate}
Our aim is to facilitate a bound where the rate, but also explicit constants can be computed for a wide range of time series models. 

We consider the case where a $d$-variate time series $\vO{X}{1}, \ldots, \vO{X}{n}$ is available, i.\,e., $\vO{X}{t}$ are $\mathbb{R}^{d}$-valued, $t=1,\ldots,n$. The components of $\vO{X}{t}$ are denoted by $\cO{X}{a}{t}$, $a=1,\ldots,d$. We are interested in the empirical cross-covariance and autocovariance, defined as
\begin{equation}\label{def:gam_tilde}
\cPpre{\gamma}{ab}{k} :=
\frac{1}{n} \sum\limits_{t=1}^{n-k} (\cO{X}{a}{t+k} - \bar X_a) (\cO{X}{b}{t} - \bar X_b), \quad k=0,\ldots,n-1,\\
\end{equation}
where $\bar X_j := \frac{1}{n} \sum_{t=1}^n \cO{X}{j}{t}$, $j=a,b$, denotes the empirical mean. For $k=-n+1,\ldots,-1$ we define $\cPpre{\gamma}{ab}{k} := \cPpre{\gamma}{ba}{-k}$.
Other definitions, that are asymptotically equivalent under regularity conditions, also exist in the literature. For example, see \cite{Anderson1971}, Chapter~8, for some common variants in the case of the autocovariance and in particular Corollary~8.4.1 in  \cite{Anderson1971} for a result asserting that these variants converge to the same Gaussian limit, under specific regularity conditions. 

In the case of stationary data where the population means are known we may substitute the empirical means in~\eqref{def:gam_tilde} by their population counterparts as below
\begin{equation*}
\frac{1}{n} \sum\limits_{t=1}^{n-k} (\cO{X}{a}{t+k} - \E \cO{X}{a}{t+k}) (\cO{X}{b}{t} - \E \cO{X}{b}{t}), \quad k=0,\ldots,n-1.\\
\end{equation*}
This corresponds to assuming that $\seq{\vO{X}{t}}$ is centered (i.\,e., $\E \vO{X}{t} = 0$), and working with the following definition of the empirical cross-covariance:
\begin{equation}\label{def:gam_hat}
\cPest{\gamma}{ab}{k} := \frac{1}{n} \sum_{t=1}^{n-k} \cO{X}{a}{t+k} \cO{X}{b}{t}, \quad k=0,\ldots,n-1,
\end{equation}
and $\cPest{\gamma}{ab}{k} := \cPest{\gamma}{ba}{-k}$, $k=-n+1,\ldots,-1$.
Autocovariances and cross-covariances are important for many time series methods; e.\,g., autoregression~\citep{Jirak2012,Jirak2014} and forecasting~\citep{BrockwellDavis2006,Kley_etal2019}.

Under conditions, it can be shown that $\cPpre{\gamma}{ab}{k}$ and $\cPest{\gamma}{ab}{k}$ are consistent estimates for\begin{equation}
\label{gamma_ab}
\cP{\gamma}{ab}{k} := \E[\cO{X}{a}{t+k} \cO{X}{b}{t}].
\end{equation}
The asymptotic normality for the distribution of the estimator holds as well. We have that
\begin{equation}
\label{eqn:weak_conv_hat_gamma}
\sqrt{n} \big(\cPpre{\gamma}{ab}{k} - \cP{\gamma}{ab}{k}\big) \xrightarrow[n \rightarrow \infty]{} N, \quad N \sim  \mathcal{N}(0, \Sigma_{ab}(k));
\end{equation}
see, for example, Exercise~7.10.36 in \cite{Brillinger1975}. The asymptotic variance
\begin{equation}
\label{def:Sigma}
\cP{\Sigma}{ab}{k} := \lim_{n \rightarrow \infty} \Var\left(n^{-1/2} \sum_{t=1}^{n-k} \cO{X}{a}{t+k} \cO{X}{b}{t} \right)
\end{equation}
depends on the second and fourth order moment structure of the underlying data; cf. eq.~(7.6.11) in~\cite{Brillinger1975}.
It is usually straightforward to compute $\cP{\Sigma}{ab}{k}$.
Details for the case of an AR(1) time series that we consider in Section~\ref{sec:expl} are provided in Section \ref{app:tech_details:AR1_expl} of the online supplement.
To prove~\eqref{eqn:weak_conv_hat_gamma}, it is common practice to make assumptions limiting the intensity of the dependence structure and the moments of the random variables involved (such as the summability of cumulants); cf.~\cite{HorvathKokoszka2008} for the discussion of cases where normality fails.

We now provide the general framework and notation used throughout the paper.
For $\mathbb{R}^d$-valued random vectors $\boldsymbol{U}$ and $\boldsymbol{V}$, we work with the 1-Wasserstein metric defined as
\begin{equation}
\label{classes_functions}
d_{{\mathrm{W}}}(\mathcal{L}(\boldsymbol{U}),\mathcal{L}(\boldsymbol{V})):=\sup_{h\in\mathcal{H}}|\mathbb{E}[h(\boldsymbol{U})]-\mathbb{E}[h(\boldsymbol{V})]|, \quad
\mathcal{H}=\{h:\mathbb{R}^d\rightarrow\mathbb{R}\,|\,\text{$\|h\|_{\mathrm{Lip}}\leq1$}\},
\end{equation}
where $\mathcal{L}(\boldsymbol{U})$ is the law of $\boldsymbol{U}$. Furthermore, for any vector $\boldsymbol{x} = (x_1, \ldots, x_d)$, we denote its Euclidean norm by $|\boldsymbol{x}| := (\sum_{i=1}^d x_i^2)^{1/2}$ and $\|h\|_{\mathrm{Lip}}=\sup_{\boldsymbol{u}\not=\boldsymbol{v}}|h(\boldsymbol{u})-h(\boldsymbol{v})|/|\boldsymbol{u}-\boldsymbol{v}|$.
In this paper, we refer short to the distance in \eqref{classes_functions} as the Wasserstein distance.
The main purpose of the paper is to assess the quality of the distributional approximation in~\eqref{eqn:weak_conv_hat_gamma} through upper bounds on the Wasserstein distance between the actual distribution of the quantity of interest on the left-hand side of \eqref{eqn:weak_conv_hat_gamma} and its limiting normal distribution; for centered data, this is achieved in Theorems~\ref{general_theorem_nonstat} and \ref{general_theorem} for the case of a non-stationary or weakly stationary sequence, respectively. Combining Theorem \ref{general_theorem} with Lemma~\ref{lem:diff_pre_est} to bound the Wasserstein distance between $\sqrt{n} \big(\cPpre{\gamma}{ab}{k} - \cP{\gamma}{ab}{k}\big)$ and $\sqrt{n} \big(\cPest{\gamma}{ab}{k} - \cP{\gamma}{ab}{k}\big)$ in the stationary case, we obtain a bound when the data are non-centered; details can be found in Sections~\ref{sec:bound_original_data} and~\ref{app:non_central}.


Our approach depends on the existence of an $m$-dependent sequence, which allows us to use Stein's method, a powerful probabilistic technique first introduced in \cite{Stein1972}, under a local dependence structure. Stein's method is particularly powerful in assessing whether a given random variable has a distribution close to a target distribution in the presence of such dependence structures between the random variables. The bounds obtained through Stein's method are explicit in terms of the constants and in terms of the sample size; see for example \cite{Anastasiou_mdependence}, where bounds for the normal approximation of the maximum likelihood estimator are provided under a local dependence structure between the random variables.

There has been a lot of interest recently on the assessment of the quality of the normal approximation related to the sum $\sum_{i=1}^{n}X_i$, where $X_1, \ldots, X_n$ are centered and follow a specific dependence structure.
While at first sight, it seems that the empirical autocovariance and cross-covariance fit into this framework (replace $n$ by $n-k$ and $X_i$ by $n^{-1}\cO{X}{a}{t+k} \cO{X}{b}{t}$), the results in the literature for $\sum_{i=1}^{n}X_i$ do not immediately provide us with the result that we are interested in; an explicit finite-sample bound assessing the quality of the approximation in \eqref{eqn:weak_conv_hat_gamma}. Amongst other reasons, this is due to the fact that the empirical autocovariance and cross-covariance are biased. We consider the empirical autocovariance and cross-covariance to be of such fundamental importance for applications that results to assess their finite sample distributional approximation, fully explicit in terms of the underlying process/model parameters, segment size $n$ and lag $k$, should be available.

We now continue to discuss work related to assessing the quality of the normal approximation for sums of dependent data.
Staying in the setting of explicit bounds but moving away from the $m$-dependence structure that we use, \cite{Roellin2018} provides bounds on the Wasserstein distance between the distribution of $\sum_{i=1}^{n}X_i$, where $X_1,\ldots, X_n$ is a discrete time martingale difference sequence, and the standard normal distribution. The bound is of the order $\mathcal{O}(n^{-1/2}\log n)$ and the strategy followed to obtain the upper bounds consists of a combination of Stein's method and Lindeberg's argument. In their work related to the Polyak-Ruppert averaged stochastic gradient descent, \cite{ABE_2019} derive an explicit upper bound on the distributional distance between the distribution of the summation of a multivariate martingale difference sequence and the multivariate normal distribution.
In their recent work, \cite{Fan_Ma_2020} extend the results of \cite{Roellin2018} by relaxing conditions used in the latter. Apart from the setting of discrete time martingales, work has been done on assessing the normal approximation of a sum of random variables when these satisfy specific mixing conditions; see \cite{Sunklodas_2007} and \cite{Sunklodas_2011} for the cases of strong and $\varphi$-mixing conditions, respectively.
\cite{DedeckerRio2008} provide bounds for the Wasserstein distance between the distribution of $\sum_{i=1}^{n}X_i$ and the normal distribution, when either strong mixing assumptions are satisfied or when $(X_i)_{i \in \mathbb{Z}}$ is either an ergodic martingale difference sequence or an ergodic stationary sequence that satisfies specific projective criteria.

Moving away from the scenario of explicit constants in the bounds, \cite{Dedecker_Merlevede} provide, in the case of $X_1, \ldots, X_n$ being a martingale difference sequence, rates of convergence for minimal distances between linear statistics of the form $\sum_{i=1}^{n}c_{n,i}X_n$, where $c_{n,i}\in \mathbb{R}$, and their limiting Gaussian distribution. \cite{Fan_2019} gives rates of convergence for the Central Limit Theorem of a martingale difference sequence with conditional moment assumptions. For $X_1, \ldots, X_n$ a stationary sequence with finite $p \in (2,3]$ moments, \cite{Jirak2016} proves under a weak dependence condition a Berry-Esseen theorem and shows convergence rates in $L^q$-norm, where $q\geq 1$.
The obtained bounds are though not explicit, in the sense that they depend on a varying absolute constant not given explicitly.

Apart from the machinery employed, the proof methodology followed, and the focus to the specific statistics of the empirical autocovariance and cross-covariance functions, the results presented in this paper are novel in three additional main aspects. Firstly, our results are applicable to non-stationary data sequences. Secondly, our focus is not only on rates of convergence, but the Wasserstein distance bounds derived in the paper are fully explicit in terms of the sample size $n$, the lag $k$, as well as constants that are related to the underlying data; this makes the bound completely computable in examples. Thirdly, the assumptions that we have used are non-restrictive, and they are partly based on an $m$-dependence approximation of the original time series, which is convenient to work with in applications, making our results applicable in a wide range of scenarios. In the case where the range of dependence is finite, for example independent observations or a moving average process of fixed order, the order of our bound is $\mathcal{O}(n^{-1/2})$. In more general cases where the serial dependence vanishes quickly at large lags and moments of order eight exist, the order of our bound is $\mathcal{O}(n^{-1/2} \log n)$. A discussion on the order of the bound can be found in Remark~\ref{rem:order} and, in more detail, in Section~\ref{sec:order_bound}.

The paper is organized as follows. In Section~\ref{sec:upper_bound_general}, we give our main result in the general case; this is an upper bound on the Wasserstein distance between the distribution of the empirical autocovariance and cross-covariance functions~$\cPest{\gamma}{ab}{k}$, defined in~\eqref{def:gam_hat}, and their limiting normal distribution. We highlight that the data are not necessarily obtained from a stationary process.
In Section~\ref{sec:local_dep}, we state and discuss the key assumption for the weakly stationary case.
In Section~\ref{sec:upper_bound}, the main result under stationarity is given.
Sections~\ref{sec:bound_mdep_appr_known} and~\ref{sec:bound_original_data} are devoted to computing bounds in terms of moments of the $m$-dependent approximation or the original centered process, respectively; details on the computation of bounds in terms of moments of the original uncentered process are deferred to Section \ref{app:non_central}. A detailed explanation of the order of the bound with respect to the sample size $n$ is given in Section~\ref{sec:order_bound}.
In Section~\ref{sec:expl}, we apply our general results to the specific case of a causal autoregressive process of order~1.
In Section~\ref{sec:Proofs}, our main result is proven.
Section~\ref{sec:Discussion} concludes the paper with a brief discussion on the results.
Technical details on the computation of the bound from Section~\ref{sec:bound_mdep_appr_known}, step-by-step proofs that were not included in the main text, technical details regarding computation and simulation, as well as additional tables for the example in Section~\ref{sec:expl} are provided in a supplement, which is available online.
Sections, results, et cetera that are numbered with letters from the Latin alphabet are always to be found in the supplement.

\section{Main results}
\label{sec:main_result}
\subsection{The explicit upper bound for the general case}
\label{sec:upper_bound_general}
In this section we present a general result that does not require the data to be from a stationary process. To apply the result, an $m$-dependent sequence of the same length, $n$, and dimension, $d$, with finite sixth moments needs to exist. The closeness of the data to the $m$-dependent approximation will determine the size of the bound.

For ease of presentation, some notation is in order.
For any vector $\boldsymbol{x} = (x_1, \ldots, x_d)$, we denote its Euclidean norm by $|\boldsymbol{x}| := (\sum_{i=1}^d x_i^2)^{1/2}$, while for a random vector $\boldsymbol{X}$, its $L^q$-norm is denoted by $\| \boldsymbol{X} \|_q := (\E[|\boldsymbol{X}|^q])^{1/q}$, $q \geq 1$.
We denote $\IN := \{1, 2, \ldots\}$ and $\IN_0 := \IN \cup \{0\}$.
Recall, the $r^{th}$ order joint cumulant of a random vector $(\zeta_1, \ldots, \zeta_r)$ is defined as
\begin{equation}\label{def:cum}
\cum(\zeta_1, \ldots, \zeta_r) := \sum_{\nu} (-1)^{p-1} (p-1)! \Big( \E \prod_{j \in \nu_1} \zeta_j\Big) \cdots \Big(\E  \prod_{j \in \nu_p} \zeta_j\Big),
\end{equation}
where the sum is with respect to all partitions $\nu := \{\nu_1, \ldots, \nu_p\}$ of $\{1, \ldots, r\}$; cf. \cite{Brillinger1975}. The general result for the case of a not necessarily stationary sequence is given in Theorem~\ref{general_theorem_nonstat} below. Its proof is deferred to Section~\ref{sec:Proofs}.
\begin{theorem}
	\label{general_theorem_nonstat}
Let $\vO{X}{1}, \ldots, \vO{X}{n}$ be a sequence of $d$-variate random vectors, and assume $\E(\vO{X}{t}) = 0$ for all $t \in \left\lbrace 1,\ldots, n\right\rbrace$. Fix $a,b \in  \left\lbrace 1,\ldots, d\right\rbrace$, and $k \in \left\lbrace 0,\ldots, n-1\right\rbrace$, and let $\hat{\gamma}_{ab}(k)$ be defined as in \eqref{def:gam_hat}. Fix $m \in \mathbb{N}_0$, and let $\boldsymbol{Y}(1), \ldots, \boldsymbol{Y}(n)$ be a sequence of $m$-dependent $d$-variate random vectors, such that $\| \vO{Y}{t} \|_6 < \infty$ for all $t=1,\ldots,n$ and assume that
\begin{equation}\label{def:tildeSigma}
\tilde{\Sigma}_{ab}(k) := \Var\left(n^{-1/2} \sum_{t=1}^{n-k} \cO{Y}{a}{t+k} \cO{Y}{b}{t} \right) > 0.
\end{equation}
Let
\begin{equation}
\nonumber
\tilde D_{a,t} := \| \cO{X}{a}{t} - \cO{Y}{a}{t} \|_2 < \infty, \quad t \in \left\lbrace 1,\ldots, n\right\rbrace,
\end{equation}	
and if $b \neq a$, let $\tilde D_{b,t}$ be defined analogously. Moreover, for $t = 1, \ldots, n-k$, let
\begin{equation}
\nonumber
\tilde{Z}(t) := \cO{Y}{a}{t+k} \cO{Y}{b}{t} - \E[\cO{Y}{a}{t+k} \cO{Y}{b}{t}]
\end{equation}
and define the quantities
\begin{equation}
\label{tildeS_mn}
\tilde Q_t := 
\E\left|\left(\tilde{Z}(t)\tilde{Z}(A_t) - \E\left( \tilde{Z}(t)\tilde{Z}(A_t)\right)\right)\tilde{Z}(B_t)\right| + \frac{1}{2}\E\left|\tilde{Z}(t)\left(\tilde{Z}(A_t)\right)^2\right|,
\end{equation}
where $\tilde{Z}(A) = \sum_{j \in A}\tilde{Z}(j)$ for $A \subset \mathbb{N}$ and
\begin{align}
\nonumber & A_t := \{\ell = 1, \ldots, n-k : | \ell - t | \leq m + k \},\\
\nonumber & B_t := \{ \ell = 1, \ldots, n-k : | \ell - t | \leq 2(m + k) \}.
\end{align}
Finally, let
\begin{equation}
\nonumber \tilde K_t := \tilde D_{a,t+k}\| \cO{X}{b}{t} \|_2 + \tilde D_{a,t+k} \tilde D_{b,t} + \| \cO{X}{a}{t+k} \|_2 \tilde D_{b,t}.
\end{equation}
Then, for any $\gamma \in \IR$ and any $\sigma^2 > 0$, 
\begin{align}
\label{final_upper_bound_nonstat}
d_{{\mathrm{W}}}\left(\mathcal{L}\left(n^{1/2}\left(\cPest{\gamma}{ab}{k} - \gamma\right)\right),\mathcal{N}(0,\sigma^2)\right) 
	\nonumber  & \leq \frac{1}{\sqrt{n}} \sum_{t=1}^{n-k} \tilde K_t + \frac{1}{\sqrt{n}}\sum_{t=1}^{n-k}\left| \frac{n}{n-k} \gamma - \E[\cO{Y}{a}{t+k}\cO{Y}{b}{t}]\right| \\
	& \quad + \sqrt{\frac{2}{\pi \sigma^2}}\left|\sigma^2 - \tilde{\Sigma}_{ab}(k)\right|
	+ \frac{2}{n^{3/2}\left(\tilde{\Sigma}_{ab}(k)\right)^{3/2}} \sum_{t=1}^{n-k} \tilde{Q}_t.
\end{align}
\end{theorem}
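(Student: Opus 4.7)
The plan is to introduce the $m$-dependent surrogate statistic
$$\tilde\gamma_{ab}(k) := \frac{1}{n} \sum_{t=1}^{n-k} \cO{Y}{a}{t+k} \cO{Y}{b}{t}, \qquad \mu_{n,k} := \E\bigl[\tilde\gamma_{ab}(k)\bigr],$$
and to use the telescoping decomposition
$$\sqrt{n}\bigl(\cPest{\gamma}{ab}{k} - \gamma\bigr) = \sqrt{n}\bigl(\cPest{\gamma}{ab}{k} - \tilde\gamma_{ab}(k)\bigr) + \sqrt{n}\bigl(\tilde\gamma_{ab}(k) - \mu_{n,k}\bigr) + \sqrt{n}\bigl(\mu_{n,k} - \gamma\bigr).$$
Combining this with the two standard observations $d_{{\mathrm{W}}}(\mathcal{L}(U+V),\mathcal{L}(V)) \leq \E|U|$ and $d_{{\mathrm{W}}}(\mathcal{L}(V+c),\mathcal{L}(V)) = |c|$ for deterministic $c$, and then threading the target $\mathcal{N}(0,\sigma^2)$ through the auxiliary normal $\mathcal{N}(0,\tilde\Sigma_{ab}(k))$ by the triangle inequality, I would reduce the task to bounding four pieces corresponding to the four summands on the right-hand side of~\eqref{final_upper_bound_nonstat}.

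For the coupling term, I would use the identity $\cO{X}{a}{t+k}\cO{X}{b}{t} - \cO{Y}{a}{t+k}\cO{Y}{b}{t} = \cO{X}{a}{t+k}(\cO{X}{b}{t} - \cO{Y}{b}{t}) + (\cO{X}{a}{t+k} - \cO{Y}{a}{t+k})\cO{Y}{b}{t}$, followed by Cauchy--Schwarz and the triangle inequality $\|\cO{Y}{b}{t}\|_2 \leq \|\cO{X}{b}{t}\|_2 + \tilde D_{b,t}$, which produces precisely $\tilde K_t$ per summand. The centering contribution $\sqrt{n}\,|\mu_{n,k} - \gamma|$ is deterministic, and rewriting $\gamma = \frac{1}{n}\sum_{t=1}^{n-k} \frac{n}{n-k}\gamma$ yields the second summand. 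The normal-to-normal piece follows from the closed form $d_{{\mathrm{W}}}(\mathcal{N}(0,\sigma_1^2), \mathcal{N}(0,\sigma_2^2)) = \sqrt{2/\pi}\,|\sigma_1 - \sigma_2|$, combined with $|\sigma_1 - \sigma_2| = |\sigma_1^2 - \sigma_2^2|/(\sigma_1+\sigma_2) \leq |\sigma_1^2 - \sigma_2^2|/\sigma$, producing the third summand.

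The core step is the normal approximation of $\sqrt{n}(\tilde\gamma_{ab}(k) - \mu_{n,k}) = n^{-1/2} \sum_{t=1}^{n-k} \tilde Z(t)$ by $\mathcal{N}(0,\tilde\Sigma_{ab}(k))$. The crucial observation is that $\tilde Z(t)$ depends only on $\vO{Y}{t}$ and $\vO{Y}{t+k}$, so the $m$-dependence of $\{\vO{Y}{t}\}$ propagates into $(m+k)$-dependence of $\{\tilde Z(t)\}$. With $A_t$ and $B_t$ as in the statement, $\tilde Z(t)$ is independent of $\{\tilde Z(s):s \notin A_t\}$, and $\{\tilde Z(s):s \in A_t\}$ is independent of $\{\tilde Z(s):s \notin B_t\}$. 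I would then invoke the standard Stein-method bound for sums under nested local dependence, applied to the unit-variance version $n^{-1/2} \sum_t \tilde Z(t)/\sqrt{\tilde\Sigma_{ab}(k)}$; after bookkeeping of constants this produces $\tfrac{2}{n^{3/2}\,\tilde\Sigma_{ab}(k)^{3/2}} \sum_t \tilde Q_t$, where the explicit constant $2$ traces back to the bound $\|f''\|_\infty \leq 2$ on the solution of Stein's equation for $1$-Lipschitz test functions.

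The hard part will be this final Stein step: tracking the explicit constants through the local-dependence expansion (rather than hiding them behind an absolute constant), carrying through the renormalisation by $\sqrt{\tilde\Sigma_{ab}(k)}$ cleanly, and verifying that the centred form $\E|(\tilde Z(t)\tilde Z(A_t) - \E(\tilde Z(t)\tilde Z(A_t)))\tilde Z(B_t)|$ emerges naturally once one subtracts the conditional expectation that appears when $\tilde Z(B_t)$ is replaced by a mean-zero perturbation. The remaining three summands are routine consequences of the triangle, Cauchy--Schwarz, and Lipschitz inequalities.
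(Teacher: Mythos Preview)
Your proposal is correct and follows essentially the same route as the paper. The paper also splits via the triangle inequality for $d_{\mathrm{W}}$ through the intermediate law $\mathcal{L}\bigl(n^{-1/2}\sum_t \tilde Z(t)\bigr)$ and then through $\mathcal{N}(0,\tilde\Sigma_{ab}(k))$, bounds the first piece by the Lipschitz inequality and the same telescoping identity for $X_aX_b - Y_aY_b$ (their Lemma~\ref{lem:approx_jnt_moments_nonstat} with $p=2$), handles the Stein step via a general-variance local-dependence lemma (their Lemma~\ref{Lemma_locally_dependent}, so no prior rescaling to unit variance is needed), and obtains the normal-to-normal term from the same closed-form bound you cite. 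The only cosmetic difference is that the paper keeps the deterministic centering $\sqrt{n}(\mu_{n,k}-\gamma)$ bundled with the random coupling term inside a single $D_1$, whereas you peel it off first; both lead to the identical four summands.
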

In the theorem and throughout the rest of the paper, we use the convention to distinguish notation related to the $m$-dependent approximation with the tilde symbol (e.g. $\tilde{D}_{a,t}, \tilde{Z}(t), \tilde{Q}_t$).

Due to the non-restrictive assumptions and explicitness of the constants, Theorem~\ref{general_theorem_nonstat} can, for example, be used to show asymptotic normality of sequences of estimators where the underlying model or the lag $k$ depends on the segment length $n$. It can also be applied to models with time-varying coefficients. Because of the countless situations in which the result can potentially be applied, but this paper only offers limited space, we will focus on one of the most relevant situations for applications in the following sections: the case of weakly stationary data.

\subsection{The key assumption for the stationary case}
\label{sec:local_dep}

From this section onwards, we consider $\seq{\vO{X}{t} : t \in \mathbb{Z}}$ to be a $d$-variate, centered and weakly stationary process, denoted by $\seq{\vO{X}{t}}$, from which a sequence $\vO{X}{1}, \ldots, \vO{X}{n}$ is available, with $\vO{X}{t}$ being $\mathbb{R}^{d}$-valued, $t=1,\ldots,n$. The main assumption used for the result under stationarity is given below.
\begin{assumption}\label{as:m_dep_appr}
	For a given $m \in \IN_0$ there exists an $m$-dependent, $d$-variate process $\seq{\vO{Y}{t}}$ where, for $a \in \left\lbrace 1,\ldots,d\right\rbrace$, and $t \in \left\lbrace 1,\ldots, n\right\rbrace$,
\begin{equation}
\nonumber
\tilde D_{a,t}^{(q)} := \| \cO{X}{a}{t} - \cO{Y}{a}{t} \|_q < \infty.
\end{equation}
The number $q \geq 1$ is specified whenever we refer to Assumption \ref{as:m_dep_appr}.
\end{assumption}
{\raggedright{We denote by}} 
\begin{equation}
\label{A:mdep}
\tilde D_{a}^{(q)} := \sup_{1 \leq t \leq n} \tilde D_{a,t}^{(q)}.
\end{equation}
We highlight that if there is a choice on the $m$-dependent sequence, then one could define $\tilde{D}_{a}^{(q)} := \inf_{\bm{Y}} \sup_{1 \leq t \leq n} \| \cO{X}{a}{t} - \cO{Y}{a}{t} \|_q$, with the infimum taken with respect to all possible choices of $\seq{\vO{Y}{t}}$ that satisfy $\| \cO{X}{a}{t} - \cO{Y}{a}{t} \|_q < \infty$; in the interest of obtaining an easily computable bound, however, we state our result for a specific choice.

Even though in this section we assume stationarity of $\seq{\vO{X}{t}}$ in order to allow for a meaningful definition of $\cP{\gamma}{ab}{k}$, our method of proof, as already stated, does not require stationarity. We state and discuss the result for the stationary case in full detail because of its relevance for applications and because it sheds light on the more general result.
Assumption~\ref{as:m_dep_appr} implies that the original process can be approximated in $L^q$ by an $m$-dependent sequence.
For Theorem~\ref{general_theorem}, approximation in $L^2$ is sufficient; i.\,e., we require Assumption~\ref{as:m_dep_appr} with $q=2$.
In Section~\ref{sec:bound_original_data} we explain the general steps to obtaining a bound in terms of properties of the original process $\seq{\vO{X}{t}}$. Lemmas~\ref{lem:bnd_var_mdep_est} and~\ref{lem:bnd1_Smn} can be used to pursue such a bound; Assumption~\ref{as:m_dep_appr} with $q=4$ and $q=6$, respectively, is then required.
We do not require $\seq{\vO{Y}{t}}$ to be stationary or centered, though in applications this will often be the case.
In the example discussed in Section~\ref{sec:expl},  $\| \cO{X}{a}{t} - \cO{Y}{a}{t} \|_q$ is actually independent of $t$; details on how to compute $\tilde D_{a}^{(q)}$ as in \eqref{A:mdep} for the example are available in Section~\ref{app:tech_details:AR1_expl}.
If $\seq{(\vO{X}{t}, \vO{Y}{t})}$ is jointly stationary up to moments of order $q$, then $\tilde D_{a}^{(q)} = \tilde D_{a,0}^{(q)}$ and the supremum in~\eqref{A:mdep} could be omitted.
Our Assumption~\ref{as:m_dep_appr} is similar in spirit to Assumption~2.1 in \cite{AueEtAl2009}; also see the  examples provided in their Section~4 that illustrate how to apply such a framework to several popular time series models.
Note the following important difference though.
The quantity $\tilde D_{a}^{(q)}$ gives a bound to the goodness of the $m$-dependent approximation measured in $L^q$ and while a larger $m$ will typically result in a better approximation (i.\,e., a smaller $\tilde D_{a}^{(q)}$), there is no requirement at the rate of decay that we would usually have if we were deriving an asymptotic result.
For our main results, that are finite sample in nature, we only require that $\cO{X}{a}{t} - \cO{Y}{a}{t}$ is in $L^q$; i.\,e. the quantity $\tilde D_{a,t}^{(q)}$ is finite.
\subsection{The explicit upper bound for centered stationary data}
\label{sec:upper_bound}
The upper bound on the quantity of interest in the case of a weakly stationary sequence is given below. 
\begin{theorem}
\label{general_theorem}
Let $\seq{\vO{X}{t}}$ be a $d$-variate, centered and weakly stationary process. Fix $a,b \in \left\lbrace 1,\ldots, d \right\rbrace$, and $k \in \left\lbrace 0,\ldots, n-1\right\rbrace$, and let $\hat{\gamma}_{ab}$ and $\gamma_{ab}(k)$ be defined as in \eqref{def:gam_hat} and \eqref{gamma_ab}, respectively. Fix $m \in \mathbb{N}_0$, and let $\seq{\vO{Y}{t}}$ be a process as in Assumption~\ref{as:m_dep_appr}, which we assume holds with $q=2$, and also $\|\vO{Y}{t}\|_6 < \infty$ for all $t = 1,\ldots, n$. For given $n \in \mathbb{N}$, assume that both $\cP{\Sigma}{ab}{k}$ and $\tilde{\Sigma}_{ab}(k)$, defined in \eqref{def:Sigma} and \eqref{def:tildeSigma}, respectively, are positive. Finally, with $\tilde D_{a}^{(q)}$ as in \eqref{A:mdep}, let,
\begin{equation}
\nonumber \tilde K := \tilde D_{a}^{(2)} \| \cO{X}{b}{0} \|_2 + \tilde D_{a}^{(2)} \tilde D_{b}^{(2)} + \| \cO{X}{a}{0} \|_2 \tilde D_{b}^{(2)}.
\end{equation}
For $A_t$ and $B_t$ as in Theorem~\ref{general_theorem_nonstat}, and $\tilde Q_t$ as in~\eqref{tildeS_mn}, we have that
\begin{align}
\label{final_upper_bound}
d_{{\mathrm{W}}}\left(\mathcal{L}\left(\sqrt{n}\left(\cPest{\gamma}{ab}{k} - \cP{\gamma}{ab}{k}\right)\right),\mathcal{N}\left(0,\cP{\Sigma}{ab}{k}\right)\right)
	\nonumber & \leq \frac{k}{\sqrt{n}} |\cO{\gamma}{ab}{k}| + \frac{\sqrt{2}}{\sqrt{\pi\cO{\Sigma}{ab}{k}}}\left|\cO{\Sigma}{ab}{k} - \tilde{\Sigma}_{ab}(k)\right|\\
	& \quad + \frac{2(n-k)}{\sqrt{n}} \tilde K + \frac{2}{\left(n\tilde{\Sigma}_{ab}(k)\right)^{3/2}} \sum_{t=1}^{n-k} \tilde Q_t.
\end{align}
\end{theorem}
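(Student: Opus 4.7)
The plan is to obtain Theorem \ref{general_theorem} as a specialization of Theorem \ref{general_theorem_nonstat}, which is already available. Since $\seq{\vO{X}{t}}$ is now assumed centered and weakly stationary, the quantity $\gamma_{ab}(k) = \E[\cO{X}{a}{t+k}\cO{X}{b}{t}]$ is well-defined and does not depend on $t$, and likewise $\|\cO{X}{a}{t}\|_2 = \|\cO{X}{a}{0}\|_2$ and $\tilde D_{a,t}^{(2)} \le \tilde D_a^{(2)}$. I would therefore apply \eqref{final_upper_bound_nonstat} with the natural choices $\gamma = \cO{\gamma}{ab}{k}$ and $\sigma^2 = \cO{\Sigma}{ab}{k}$, and then simplify each of the four summands on the right-hand side in turn.

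The third and fourth summands match \eqref{final_upper_bound} immediately after substitution, using the identity $n^{3/2}(\tilde{\Sigma}_{ab}(k))^{3/2} = (n\tilde{\Sigma}_{ab}(k))^{3/2}$. For the first summand, stationarity gives $\tilde K_t \le \tilde K$ uniformly in $t$, so that $n^{-1/2}\sum_{t=1}^{n-k}\tilde K_t \le \frac{n-k}{\sqrt{n}}\tilde K$. The main work lies in reorganizing the second summand of \eqref{final_upper_bound_nonstat}, namely
\begin{equation*}
\frac{1}{\sqrt{n}}\sum_{t=1}^{n-k}\left|\tfrac{n}{n-k}\cO{\gamma}{ab}{k} - \E[\cO{Y}{a}{t+k}\cO{Y}{b}{t}]\right|.
\end{equation*}
I would split via $\tfrac{n}{n-k} = 1 + \tfrac{k}{n-k}$ and the triangle inequality, isolating a bias-like term of size $\tfrac{k}{n-k}|\cO{\gamma}{ab}{k}|$ that sums (over $n-k$ indices, divided by $\sqrt{n}$) to exactly $\tfrac{k}{\sqrt{n}}|\cO{\gamma}{ab}{k}|$, the first summand of \eqref{final_upper_bound}.

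The remaining piece $|\cO{\gamma}{ab}{k} - \E[\cO{Y}{a}{t+k}\cO{Y}{b}{t}]|$ must be bounded by $\tilde K$ so that it can be combined with the first summand to produce the coefficient $\tfrac{2(n-k)}{\sqrt{n}}$ of $\tilde K$ in \eqref{final_upper_bound}. For this I would use the algebraic identity
\begin{equation*}
\cO{X}{a}{t+k}\cO{X}{b}{t} - \cO{Y}{a}{t+k}\cO{Y}{b}{t} = (\cO{X}{a}{t+k}-\cO{Y}{a}{t+k})\cO{X}{b}{t} + \cO{Y}{a}{t+k}(\cO{X}{b}{t}-\cO{Y}{b}{t}),
\end{equation*}
take expectations, apply Cauchy--Schwarz, and bound $\|\cO{Y}{a}{t+k}\|_2 \le \|\cO{X}{a}{0}\|_2 + \tilde D_a^{(2)}$ by the triangle inequality, arriving precisely at $\tilde K$. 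The only subtle point is this matching of the cross term $\tilde D_a^{(2)} \tilde D_b^{(2)}$, which is why the $L^2$-approximation error of $\cO{Y}{a}{\cdot}$ enters twice; beyond that, the derivation is a direct application of Theorem \ref{general_theorem_nonstat} together with the uniform-in-$t$ bounds that stationarity provides.
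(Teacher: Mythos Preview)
Your proposal is correct and follows essentially the same route as the paper's proof: apply Theorem~\ref{general_theorem_nonstat} with $\gamma=\cO{\gamma}{ab}{k}$, $\sigma^2=\cO{\Sigma}{ab}{k}$, use $\tilde K_t\le\tilde K$ by stationarity, and split the second summand via $\tfrac{n}{n-k}=1+\tfrac{k}{n-k}$ into the bias term $\tfrac{k}{\sqrt n}|\cO{\gamma}{ab}{k}|$ plus a residual bounded by $\tilde K$ through the telescoping identity and Cauchy--Schwarz. The paper packages this last step as an appeal to Lemma~\ref{lem:approx_jnt_moments} with $\alpha=1$, $p=2$, but the content is exactly your two-term decomposition and the bound $\|\cO{Y}{a}{t+k}\|_2\le\|\cO{X}{a}{0}\|_2+\tilde D_a^{(2)}$.
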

\noindent
\textbf{Proof of Theorem~\ref{general_theorem}.}
Choose $\gamma := \cP{\gamma}{ab}{k}$ and $\sigma^2 := \cO{\Sigma}{ab}{k}$. Note that the conditions of Theorem~\ref{general_theorem} imply that the conditions of Theorem~\ref{general_theorem_nonstat} are satisfied. The bound in the stationary case then follows from $\tilde K_t \leq \tilde K$ for all $t = 1,\ldots, n-k$ and
\begin{equation*}
\begin{split}
& \frac{1}{\sqrt{n}}\sum_{t=1}^{n-k}\left| \frac{n}{n-k} \gamma - \E[\cO{Y}{a}{t+k}\cO{Y}{b}{t}]\right| \\
& \leq \frac{1}{\sqrt{n}}\sum_{t=1}^{n-k}\left| \frac{n}{n-k} \cP{\gamma}{ab}{k} - \cP{\gamma}{ab}{k} \right| + \frac{1}{\sqrt{n}}\sum_{t=1}^{n-k} \E\left| \cO{X}{a}{t+k}\cO{X}{b}{t} - \cO{Y}{a}{t+k}\cO{Y}{b}{t}\right| \\
& \leq \frac{n-k}{\sqrt{n}} \left| \frac{k}{n-k} \cP{\gamma}{ab}{k} \right| + \frac{n-k}{\sqrt{n}}\tilde K,
\end{split}
\end{equation*}
where we used \eqref{gamma_ab} and a telescoping sum argument (Lemma~\ref{lem:approx_jnt_moments} with $\alpha = 1$ and $p = 2$).
\qed

\begin{remark}
	The four terms that make up the right-hand side of~\eqref{final_upper_bound} can roughly be interpreted as follows: (i) $k n^{-1/2} |\cO{\gamma}{ab}{k}|$, is due to the fact that $\cPest{\gamma}{ab}{k}$ is a biased estimate for $\cP{\gamma}{ab}{k}$, but the limiting normal distribution has mean equal to zero; (ii) $\sqrt{2/(\pi\cO{\Sigma}{ab}{k})} |\cO{\Sigma}{ab}{k} - \tilde{\Sigma}_{ab}(k)|$, is related to the fact that the variance of $\cPest{\gamma}{ab}{k}$ may differ from $n^{-1} \Sigma_{ab}(k)$; \linebreak (iii) $2(n-k) n^{-1/2} \tilde K$, is due to our method of proof where we use the $m$-dependent approximation, and (iv) $2(n\tilde{\Sigma}_{ab}(k))^{-3/2} \sum_{t=1}^{n-k} \tilde Q_t$, is due to an application of Stein's method; cf.\ Lemma~\ref{Lemma_locally_dependent}.
\end{remark}
The following remark provides a brief discussion of the computation and of the order of the bound; detailed explanations are given in Section~\ref{sec:order_bound}.
\begin{remark}
	\label{rem:order}
	At first glance, the bound might seem slightly complicated, especially due to the expression $\tilde{Q}_{t}$. In Section~\ref{sec:bound_mdep_appr_known}, we explain two methods that allow to bound $\tilde Q_t$ by expressions whose exact value can be computed in examples. In Section~\ref{sec:expl}, we then calculate the exact value of such a bound term by term for the case of a causal autoregressive process. To obtain a rate, we choose $m$ as a function of $n$. The choice that allows optimization of the order of the bound with respect to $n$ depends on the underlying process $\seq{\vO{X}{t}}$. In Section~\ref{sec:order_bound}, we discuss two general scenarios where the bound is of the order $\mathcal{O}(n^{-1/2})$ or $\mathcal{O}(n^{-1/2} \log n)$, respectively.
\end{remark}
\subsection{The bound in Theorem \ref{general_theorem} when the $m$-dependent approximation is known}
\label{sec:bound_mdep_appr_known}
Let $\vO{X}{t}$ be such that, for given $n$, $k$, and $m$, we can compute $\cO{\gamma}{ab}{k}$, $\cO{\Sigma}{ab}{k}$, $\| \cO{X}{a}{0} \|_2$ and $\| \cO{X}{b}{0} \|_2$.
Assume further that we may choose $\vO{Y}{t}$ such that $\tilde{\Sigma}_{ab}(k)$, $\tilde D_{a}^{(2)}$ and $\tilde D_{b}^{(2)}$ can be computed.
Then, the only missing piece to obtain the upper bound in~\eqref{final_upper_bound} is $\tilde Q_t$, defined in~\eqref{tildeS_mn}.
The absolute joint moments in the definition of $\tilde Q_t$ can be inconvenient.
To address potential problems in the computation of $\tilde Q_t$, we now describe two ways to bound $\tilde Q_t$ by quantities that can be explicitly computed in examples.
Firstly, if $\seq{\vO{Y}{t}}$ is stationary (otherwise see below, within Method 1), we bound $\tilde Q_t$ in terms of $\|\vO{Y}{0}\|_6$, which is finite from the statement of Theorem \ref{general_theorem}. Secondly, we obtain a better bound for $\tilde Q_t$ when $m$ is large. The price we pay for the second method is a more complicated computation and the requirement that $\|\vO{Y}{0}\|_8 < \infty$.

{\raggedright{\textbf{Method 1 to bound $\tilde Q_t$.}}} Denoting $\mu_{jq} := \|\cO{Y}{j}{0} \|_q$, we have
\begin{equation}\label{bnd:Qt_naive}
\begin{split}
& \tilde Q_t \leq 
\sum_{j_1 \in A_t} \sum_{j_2 \in B_t} \Big( \E\big| \tilde{Z}(t) \tilde{Z}(j_1) \tilde{Z}(j_2) \big|
+ \E\big| \tilde{Z}(t) \tilde{Z}(j_1) \big| \E\big|  \tilde{Z}(j_2) \big| \Big) + \frac{1}{2} \sum_{j_1 \in A_t} \sum_{j_2 \in A_t}  \E\big| \tilde{Z}(t) \tilde{Z}(j_1) \tilde{Z}(j_2) \big| \\
& \leq |A_t| |B_t| \Big( \mu_{a6}^3 \mu_{b6}^3 + \mu_{a4}^2 \mu_{b4}^2 \mu_{a2} \mu_{b2} \Big) + \frac{1}{2} |A_t|^2 \mu_{a6}^3 \mu_{b6}^3 \leq \frac{5}{2} (4m + 4k + 1)^2 \|\vO{Y}{0}\|_6^6.
\end{split}
\end{equation}
Employing the triangle inequality, a generalized version of H\"{o}lder's inequality, and the stationarity of $\vO{Y}{t}$, the joint moments in the definition of~$\tilde Q_t$ were broken up into moments of the marginals.
If $\seq{\vO{Y}{t}}$ is not stationary we can use the $\sup_{t=1,\ldots,n} \|\vO{Y}{t}\|_6$ instead.
The bound in~\eqref{bnd:Qt_naive} is particularly simple and straightforward to compute.
In essence, we see a product of the marginal moments $\|\cO{Y}{j}{0} \|_q$ for $j=a,b$ and $q=2,4,6$ scaled by a multiple of~$m^2$.
A bound of the order~$m^2$ is most useful when $m$ is small.
To improve upon~\eqref{bnd:Qt_naive} in the case when~$m$ is large, we next derive a bound for $\tilde Q_t$ in terms of joint (non-absolute) moments.

{\raggedright{\textbf{Method 2 to bound $\tilde Q_t$.}}} We apply the Cauchy-Schwarz inequality and $\E(\tilde{Z}(t)) = 0$ to obtain  
\begin{equation}\label{bnd:Smn_new_bound}
\begin{split}
\tilde Q_t \leq \Var \Big( \tilde{Z}(t) \tilde{Z}(A_t) \Big)^{1/2} \Var\Big( \tilde{Z}(B_t) \Big)^{1/2}
 + \frac{1}{2}  \Big[ \E \Big( \tilde{Z}(t) \tilde{Z}(A_t) \Big)^2 \Big]^{1/2} \Var \Big( \tilde{Z}(A_t) \Big)^{1/2}.
\end{split}
\end{equation}
A crucial difference between the right-hand side of~\eqref{bnd:Smn_new_bound} and the first bound in~\eqref{bnd:Qt_naive} is that the former is in terms of joint moments of $\tilde{Z}(t)$ and the later in terms of joint moments of $|\tilde{Z}(t)|$.
Using standard combinatorial arguments (cf.\ Theorem 2.3.2 in \cite{Brillinger1975}), the right-hand side in~\eqref{bnd:Smn_new_bound} can be computed from cumulants of $\seq{\vO{Y}{t}}$.
These arguments are straightforward but tedious and therefore deferred to Section~\ref{app:tech_details:Sec23}.
Another important advantage of the second method is that, in the common situation where serial dependence is less pronounced at larger lags, such that cumulants are summable, the bound obtained by the second method is of the order $\mathcal{O}(m)$, which, compared to the $\mathcal{O}(m^2)$ bound obtained by Method 1, is much advantageous when $m$ is large. Intuitively, this can be seen from the fact that the variance of a sum of $m$ elements of a short range dependent sequence is of the order~$m$.
Additional details are available in the proof of Proposition~\ref{prop:rate} that can be found in Section~\ref{prf:prop:rate}.

\subsection{The bound with respect to the original data}
\label{sec:bound_original_data}
In Section~\ref{sec:bound_mdep_appr_known} we explained computational details regarding a bound for the case when $\seq{\vO{Y}{t}}$, the $m$-dependent approximation, is known. The method described required the computation of joint moments of $\seq{\vO{Y}{t}}$ in order to obtain $\tilde{\Sigma}_{ab}(k)$ and the right-hand side of either~\eqref{bnd:Qt_naive} or~\eqref{bnd:Smn_new_bound}. If such computation is possible, then numerically evaluating the bound obtained from~\eqref{final_upper_bound} in combination with~\eqref{bnd:Qt_naive} or~\eqref{bnd:Smn_new_bound}, for fixed values of $n$ and $m$, is the preferred method. The aim of this section is to facilitate our result for situations where a bound that depends on $\seq{\vO{Y}{t}}$ might be inconvenient (e.\,g., when $\seq{\vO{Y}{t}}$ is unknown).

There are, at least two, good reasons to pursue a bound that only depends on quantities defined in terms of $\seq{\vO{X}{t}}$. The first reason is a philosophical one. Noting that the statistic of interest, $\cPest{\gamma}{ab}{k}$, is defined in terms of the original process $\seq{\vO{X}{t}}$ we observe that the left-hand side of~\eqref{final_upper_bound} only depends on $\seq{\vO{X}{t}}$, too. Therefore, the right-hand side of~\eqref{final_upper_bound} being defined, amongst others, in terms of $\tilde{\Sigma}_{ab}(k)$ and $\tilde Q_t$, both depending on $\seq{\vO{Y}{t}}$, can be considered a discrepancy. The second reason is a practical one. In Sections~\ref{sec:order_bound} and~\ref{prf:prop:rate} it can be seen that the discussion of asymptotic properties of the bound can be simplified when the dependence on $m$ is not via properties of $\vO{Y}{t}$.

To obtain a bound in terms of moments of $\seq{\vO{X}{t}}$, it suffices to quantify the effect of replacing $\tilde{\Sigma}_{ab}(k)$ by $\cP{\Sigma}{ab}{k}$ and the effect of replacing $\tilde Q_t$ by 
\begin{equation}
\label{notation_for_fourth_term}
Q_t := \E\left|\left(Z(t) Z(A_t) - \E\Big( Z(t) Z(A_t) \Big)\right) Z(B_t) \right| + \frac{1}{2}\E\left| Z(t) \Big( Z(A_t) \Big)^2\right|,
\end{equation}
where $Z(t) := \cO{X}{a}{t+k} \cO{X}{b}{t} - \E[\cO{X}{a}{t+k} \cO{X}{b}{t}]$, $t \in \IN$, and $Z(A) = \sum_{j \in A}Z(j), A \subset \mathbb{N}$.

In Section~\ref{app:lemmas_for_S2.5} we provide results that can be used to derive a bound in terms of moments of $\seq{\vO{X}{t}}$. Further, in Section~\ref{app:non_central}, we discuss the case of non-centred data and provide a result to derive a bound for $d_{{\mathrm{W}}}\left(\mathcal{L}\left(\sqrt{n}\left(\cPpre{\gamma}{ab}{k} - \cP{\gamma}{ab}{k}\right)\right),\mathcal{N}(0,\Sigma_{ab}(k))\right)$ in this case.

\subsection{Explanation on the order of the bound}
\label{sec:order_bound}
In Remark~\ref{rem:order} we have stated the outcomes of the asymptotic analysis of our bound. In this section, the details are provided. We begin by making the conditions we work under precise. For simplicity, we consider only the case where the underlying process $\seq{\vO{X}{t}}$ and the lag $k$ are not allowed to change with $n$. The two regimes we consider are:

{\raggedright{\textbf{Regime 1.}}} Let $\seq{\vO{X}{t}}$ be $d$-variate, centered and stationary, with $\|\vO{X}{0}\|_6 < \infty$, and also be $M$-dependent (for a fixed $M \in \IN_0$).

{\raggedright{\textbf{Regime 2.}}} Let $\seq{\vO{X}{t}}$ be $d$-variate, centered and stationary, with $\|\vO{X}{0}\|_8 < \infty$, and it also satisfies Assumption~\ref{as:m_dep_appr} with $q=8$, and~\eqref{eqn:sum_cum} and~\eqref{A:mdep_geom}, below.

We assume stationarity up to moment of order 6 or 8 in Regimes~1 and~2, respectively. In Regime~2, we require summability of cumulants up to order 8; i.\,e., for $p=2, \ldots, 8$, we have
\begin{equation}\label{eqn:sum_cum}
\sum_{k_1, \ldots, k_{p-1} = -\infty} (1+|k_j|) | \cum(\cO{X}{a_1}{k_1}, \ldots, \cO{X}{a_{p-1}}{k_{p-1}}, \cO{X}{a_p}{0} | < \infty,
\end{equation}
for $j = 1, \ldots, p-1$ and any $p$ tuple $a_1, \ldots, a_p$. Further, we require that the $m$-dependent approximation from Assumption~\ref{as:m_dep_appr} is good enough such that the $L^q$-error vanishes at an exponential rate; i.\,e., there exist constants $K \geq 0$ and $\rho \in (0,1)$ such that for every $n \in \IN$ we can choose $m = m_n \in \IN_0$ and an $m$-dependent, $d$-variate process $\seq{\vO{Y^{(m)}}{t}}$ that satisfies
\begin{equation}
\label{A:mdep_geom}
\tilde D_{a}^{(q)} := \sup_{1 \leq t \leq n} \| \cO{X}{a}{t} - \cO{Y^{(m)}}{a}{t} \|_q \leq K \rho^m, \quad \text{ for }\; a=1,\ldots,d;\; q=8.
\end{equation}

\begin{remark}\label{rem:asym_reg}
	(i) Examples for Regime~1 include moving average processes of finite order and independent data. In this regime, \eqref{eqn:sum_cum} holds for $p=2,\ldots,6$, as cumulants vanish if one of the variables is independent of the others. Further, for any $M$-dependent process, as in Regime~1, the canonical choice for the $m$-dependent approximation of Assumption~\ref{as:m_dep_appr} is $\vO{Y^{(m)}}{t} = \vO{X}{t}$ for $m \geq M$. Choosing the quantity $m$ in the bound~\eqref{final_upper_bound} as $m = \min\{M, n\}$ we see that a stronger version of~\eqref{A:mdep_geom} is satisfied, where we have $\tilde{D}_a^{(q)} = 0$ for $n \geq M$.
	\vspace{0.05in}
	\\
	(ii) As an example for Regime 2, one can consider a linear process $\vO{X}{t} = \sum_{j=0}^{\infty} \vO{\Psi}{j} \vO{\epsilon}{t-j}$ where the spectral norms of the coefficients satisfy $\| \vO{\Psi}{j}\|_2 \leq \rho^j$ for some $\rho \in (0,1)$ and the innovations are i.\,i.\,d with $\|\vO{\epsilon}{t}\|_8 < \infty$. Then, it can be shown that~\eqref{eqn:sum_cum} holds and~\eqref{A:mdep_geom} holds with $C := \|\vO{\epsilon}{t}\|_8 \rho/(1-\rho)$. In particular, causal autoregressive processes are included.
\end{remark}

The following proposition gives the order of the bound~\eqref{final_upper_bound} in Theorem~\ref{general_theorem}. The proof is in Section \ref{prf:prop:rate}.
\begin{proposition}\label{prop:rate}
(i) In Regime~1, with $m := \min\{M,n\}$, the order of the bound is $\mathcal{O}(n^{-1/2})$.\\	
(ii) In Regime~2, with $m := C \log n$, $C \geq \frac{3}{2\log(1/\rho)}$, where $\rho$ is as in \eqref{A:mdep_geom}, the order of the bound is $\mathcal{O}(n^{-1/2} \log n)$.
\end{proposition}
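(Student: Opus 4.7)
The plan is to bound each of the four summands on the right-hand side of~\eqref{final_upper_bound} separately under each regime.

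\emph{Regime 1.} Since $\seq{\vO{X}{t}}$ is itself $M$-dependent, I would set $\vO{Y^{(m)}}{t} := \vO{X}{t}$ for $m := \min\{M, n\}$. For $n \geq M$ this forces $\tilde D_a^{(q)} = 0$, so $\tilde K = 0$ kills term~three, and $\tilde\Sigma_{ab}(k) = \Sigma_{ab}^{(n)}(k) := n^{-1}\Var\!\big(\sum_{t=1}^{n-k}\cO{X}{a}{t+k}\cO{X}{b}{t}\big)$. A standard Ces\`aro-type calculation using $\sum_h |h| |c(h)| < \infty$ with $c(h) := \Cov(\cO{X}{a}{h+k}\cO{X}{b}{h}, \cO{X}{a}{k}\cO{X}{b}{0})$, which follows from $M$-dependence and finite sixth moments, yields $|\cP{\Sigma}{ab}{k} - \Sigma_{ab}^{(n)}(k)| = \mathcal{O}(1/n)$, making term~two $\mathcal{O}(1/n)$. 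Term~one is $\mathcal{O}(n^{-1/2})$ trivially. For term~four, Method~1 \eqref{bnd:Qt_naive} gives $\tilde Q_t \leq \tfrac{5}{2}(4M+4k+1)^2\|\vO{X}{0}\|_6^6 = \mathcal{O}(1)$ uniformly in $t$, so $\sum_t \tilde Q_t = \mathcal{O}(n)$ and term~four is $\mathcal{O}(n^{-1/2})$.

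\emph{Regime 2.} With $m := \lceil C\log n \rceil$ and $C \geq 3/(2\log(1/\rho))$, we have $\rho^m \leq n^{-3/2}$. Term~one is $\mathcal{O}(n^{-1/2})$. Using $\tilde D_a^{(2)} \leq \tilde D_a^{(8)} \leq K\rho^m$ (by Jensen applied to \eqref{A:mdep_geom}, and analogously for $b$), we get $\tilde K = \mathcal{O}(\rho^m)$, and hence term~three is $\mathcal{O}(n^{1/2}\rho^m) = \mathcal{O}(n^{-1})$. For term~two, I split $|\cP{\Sigma}{ab}{k} - \tilde\Sigma_{ab}(k)|$ through $\Sigma_{ab}^{(n)}(k)$: cumulant summability \eqref{eqn:sum_cum} yields the Ces\`aro bound $|\cP{\Sigma}{ab}{k} - \Sigma_{ab}^{(n)}(k)| = \mathcal{O}(1/n)$, while the identity $|\Var(U) - \Var(V)| = |\Cov(U-V,U+V)| \leq \|U-V\|_2(\|U\|_2+\|V\|_2)$ applied with $U := n^{-1/2}\sum_t\cO{X}{a}{t+k}\cO{X}{b}{t}$ and $V$ its $\seq{\vO{Y^{(m)}}{t}}$-analogue, together with H\"older's inequality, the triangle inequality, and \eqref{A:mdep_geom}, gives $|\Sigma_{ab}^{(n)}(k) - \tilde\Sigma_{ab}(k)| = \mathcal{O}(n^{1/2}\rho^m) = \mathcal{O}(n^{-1})$. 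Hence term~two is $\mathcal{O}(1/n)$.

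The dominant contribution in Regime 2 is term~four, which I would handle via Method~2 \eqref{bnd:Smn_new_bound}. Expanding the two resulting variances of products of four $\tilde Z$'s into joint cumulants via Theorem~2.3.2 of \cite{Brillinger1975} and invoking the cumulant summability \eqref{eqn:sum_cum} (inherited by the $m$-dependent sequence provided it is chosen coherently), each variance is of order $\mathcal{O}(m)$ rather than the trivial $\mathcal{O}(m^2)$, yielding $\tilde Q_t = \mathcal{O}(m)$ uniformly in $t$. Since $\tilde\Sigma_{ab}(k) \to \cP{\Sigma}{ab}{k} > 0$ by the bound on term~two, the factor $(\tilde\Sigma_{ab}(k))^{-3/2}$ is eventually bounded, so term~four is $\mathcal{O}(nm/n^{3/2}) = \mathcal{O}(\log n/n^{1/2})$, dominating the other terms and giving the stated rate. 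I expect the main obstacle to be precisely this last step: the combinatorial bookkeeping showing that the variances appearing in Method~2 collapse from $\mathcal{O}(m^2)$ to $\mathcal{O}(m)$ under short-range dependence, which relies crucially on cumulant summability rather than mere finiteness of the eighth moment.
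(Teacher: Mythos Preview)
Your treatment of the first three terms matches the paper's argument closely (the paper invokes Lemma~\ref{lem:bnd_var_mdep_est} rather than your covariance identity for the second term, but the outcome is the same). The meaningful difference lies in how the fourth term is handled under Regime~2.

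You propose to apply Method~2 directly to $\tilde Q_t$, i.e.\ to the $\tilde Z$'s built from $\seq{\vO{Y^{(m)}}{t}}$, and then invoke cumulant summability, noting parenthetically that it is ``inherited by the $m$-dependent sequence provided it is chosen coherently''. This is the soft spot. Condition~\eqref{eqn:sum_cum} is assumed for $\seq{\vO{X}{t}}$ only, and nothing in the statement of Regime~2 guarantees that the eighth-order cumulant sums of $\seq{\vO{Y^{(m)}}{t}}$ are bounded \emph{uniformly in $m$}; you would have to prove this, which amounts to controlling $\big|\cum(Y^{(m)}\text{'s}) - \cum(X\text{'s})\big|$ via $\tilde D^{(8)}$ and checking that the resulting $\mathcal{O}(m^2\rho^m)$ correction vanishes. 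The paper avoids this issue entirely by first transferring from $\tilde Q_t$ to $Q_t$ at the level of the bound itself: using~\eqref{eqn:outl_bnd_tildeSmn_1} and Lemma~\ref{lem:bnd1_Smn}, one shows $|\tilde Q_t - Q_t| = \mathcal{O}(m^2\rho^m) = o(1)$, and then applies Method~2 to $Q_t$, where the cumulants are those of $\seq{\vO{X}{t}}$ and~\eqref{eqn:sum_cum} applies directly.

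Once that reduction is in place, the remaining task --- the one you flag as the main obstacle --- is to show that $\sum_{j_1,j_2\in A_t}\cum(Z_t,Z_t,Z_{j_1},Z_{j_2})$ is $\mathcal{O}(1)$ uniformly in~$t$. The paper carries this out by expanding into cumulants of $\seq{\vO{X}{t}}$ via the product-cumulant formula and then running a case analysis over all indecomposable partitions of the $4\times 2$ index table: for each partition one identifies either a single block containing a $j_1$-index, a $j_2$-index and a non-$j$-index (so both sums collapse into one absolute-cumulant sum), or two blocks that separate the $j_1$- and $j_2$-sums. Your proposal anticipates this bookkeeping but does not execute it; the paper's case-by-case argument is where the work actually lies.
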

\section{Examples}\label{sec:expl}
\subsection{Causal autoregressive processes of order 1}\label{sec:expl:AR1}
As an example, for which we discuss the result of Theorem~\ref{general_theorem}, we now consider the case where the data stem from a causal AR(1) process $\seq{\cO{X}{}{t}}$ that satisfies $\cO{X}{}{t} = \alpha \cO{X}{}{t-1} + \cO{\varepsilon}{}{t},$ with $|\alpha| < 1$, where $\seq{\cO{\varepsilon}{}{t}}$ are i.\,i.\,d.\ and satisfy $\E |\cO{\varepsilon}{}{t} |^8 < \infty$. We consider $\alpha \in \{0, 0.1, 0.3, 0.5, 0.7\}$ and three cases for the distribution of the innovations:
\begin{itemize}
	\item $\varepsilon(t) \sim \mathcal{N}(0,1)$, or
	\item $\varepsilon(t) \sim \nu^{-1/2} (\nu-2)^{1/2} t_{\nu}$, where we choose $\nu \in \{ 9, 14 \}$.
\end{itemize}
We have chosen the normal distribution as an example with light tails, the scaled $t_{9}$-distribution as a distribution with heavier tails that still satisfies the condition of existence of the 8th moments, and the scaled $t_{14}$-distribution as an example in-between. Note that for each of these three cases we have standardized cumulants of orders 1 and 2; i.\,e., $\kappa_1 := \E\left(\varepsilon(t)\right) = 0$ and $\kappa_2 := \Var\left( \varepsilon(t)\right) = 1$. Cumulants of higher order depend on the distribution of the innovations. If $\varepsilon(t) \sim \mathcal{N}(0,1)$, then cumulants of order higher than or equal to 3 vanish; i.\,e., $\kappa_p := \cum_p(\varepsilon(t)) = 0$, for $p = 3,4,\ldots$.
In the case when $\varepsilon(t) \sim \nu^{-1/2} (\nu-2)^{1/2} t_{\nu}$, $\nu > 8$, we have that cumulants of orders $p = 3, 5, 7$ vanish due to symmetry, and that cumulants of order 4, 6 and 8 are
\[\kappa_4 = \frac{6}{\nu-4}, \quad \kappa_6 = \frac{240}{(\nu-4)(\nu-6)}, \quad \text{and} \quad \kappa_8 = \frac{5040 (5 \nu - 22)}{(\nu-4)^2 (\nu-6) (\nu-8)}.\]
\proglang{R} code and instructions to replicate the results of Section \ref{sec:expl} are available on \linebreak \url{https://github.com/tobiaskley/ccf_bounds_replication_package}.

\subsection{Computing the bound}\label{sec:comp_bnd}
We compute the bound from Theorem~\ref{general_theorem} in combination with~\eqref{bnd:Smn_new_bound} of the second method to bound $\tilde Q_t$, described in Section~\ref{sec:bound_mdep_appr_known}, where the data stem from an AR(1) process as described in Section~\ref{sec:expl:AR1}. Details of how the bound is obtained in the case of the example are deferred to Section \ref{app:tech_details:AR1_expl}. Note that, for given autoregressive parameter $\alpha$, distribution of $\varepsilon(t)$, segment length $n$, and lag $k$ the bound is still a function of $m$. We denote the bound by $B_n(m)$ to emphasize that it can be computed for different values of $m$. Further, we denote by $m^* := \arg\min_{m=0,1,\ldots, m_{\max}} B_n(m)$ the value of $m$ for which the minimum is achieved. We have introduced the upper bound $m_{\max}$ as a stopping rule for computations which we chose large enough such that $m^* < m_{\max}$ was satisfied in all cases of our example, meaning that the minimum is not obtained for $m = m_{\max}$. We chose $m_{\max} = 30$. In Figure~\ref{fig:bnd_fct_m}, values of the bound $B_n(m)$ are shown as they depend on $m$, for different $n$ and different distributions of $\varepsilon(t)$.
Comparing the plots in Figure~\ref{fig:bnd_fct_m} from left to right, it can be seen that $m^*$ increases very slowly as $n$ increases.
This is unsurprising because in this example of the causal AR(1) process, we are under Regime 2 explained in Section \ref{sec:order_bound}; recall the asymptotic considerations of Proposition \ref{prop:rate} where $m^* = \mathcal{O}(\log n)$, which leads to $B_n(m^*) = \mathcal{O}(n^{-1/2} \log n)$ .
Comparing the plots in Figure~\ref{fig:bnd_fct_m} from top to bottom, it can be seen that the value of the bound gets larger as the tails get heavier.
We expect this as well, as the cumulants of the distribution of the innovations $\varepsilon(t)$ become larger when we have distributions with heavier tails.

\begin{figure}[t]
	\hspace*{0.5cm}
	\begin{minipage}{1.1\linewidth}
		\includegraphics[scale = 0.35]{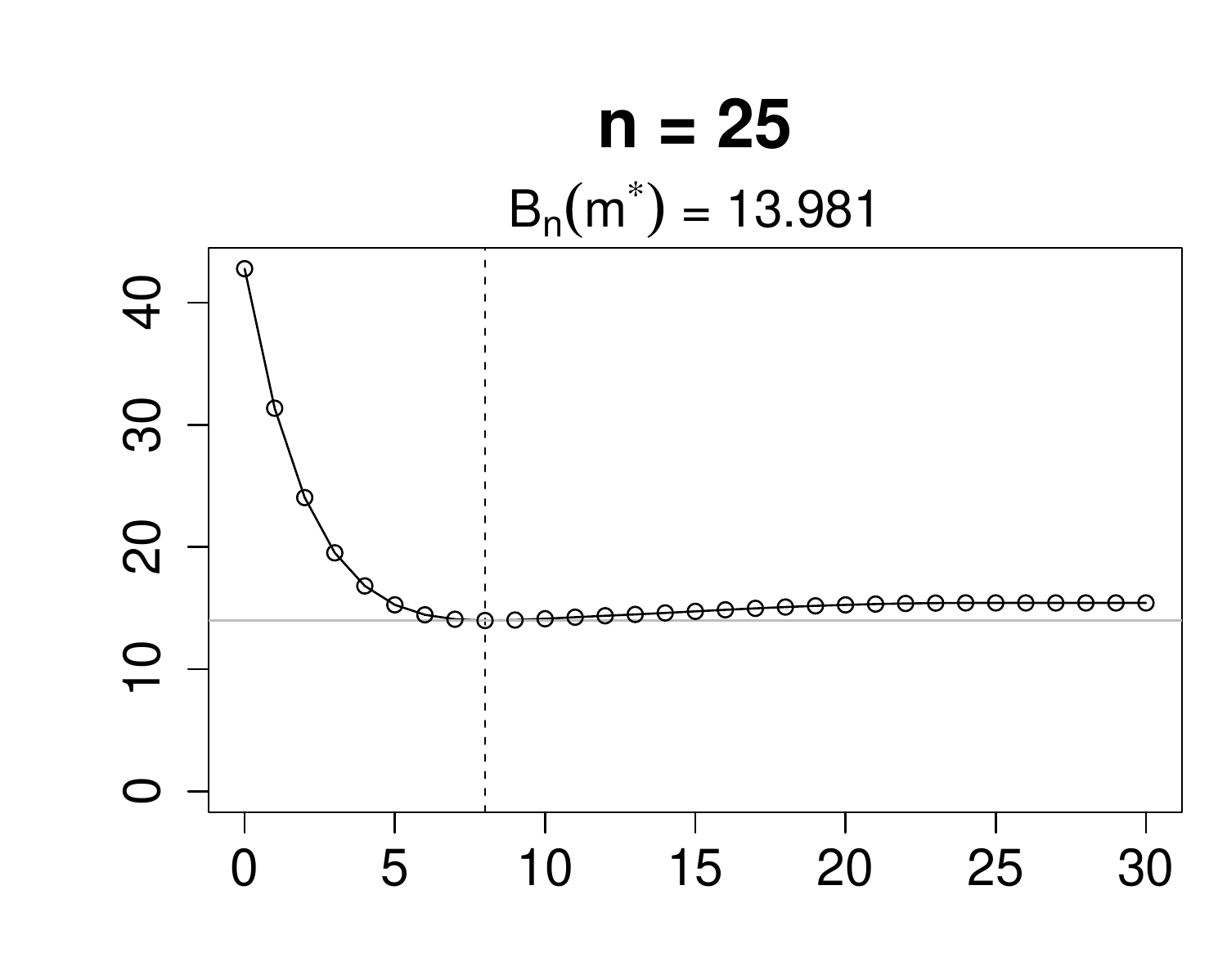}
		\hspace*{-0.55cm}
		\includegraphics[scale = 0.35]{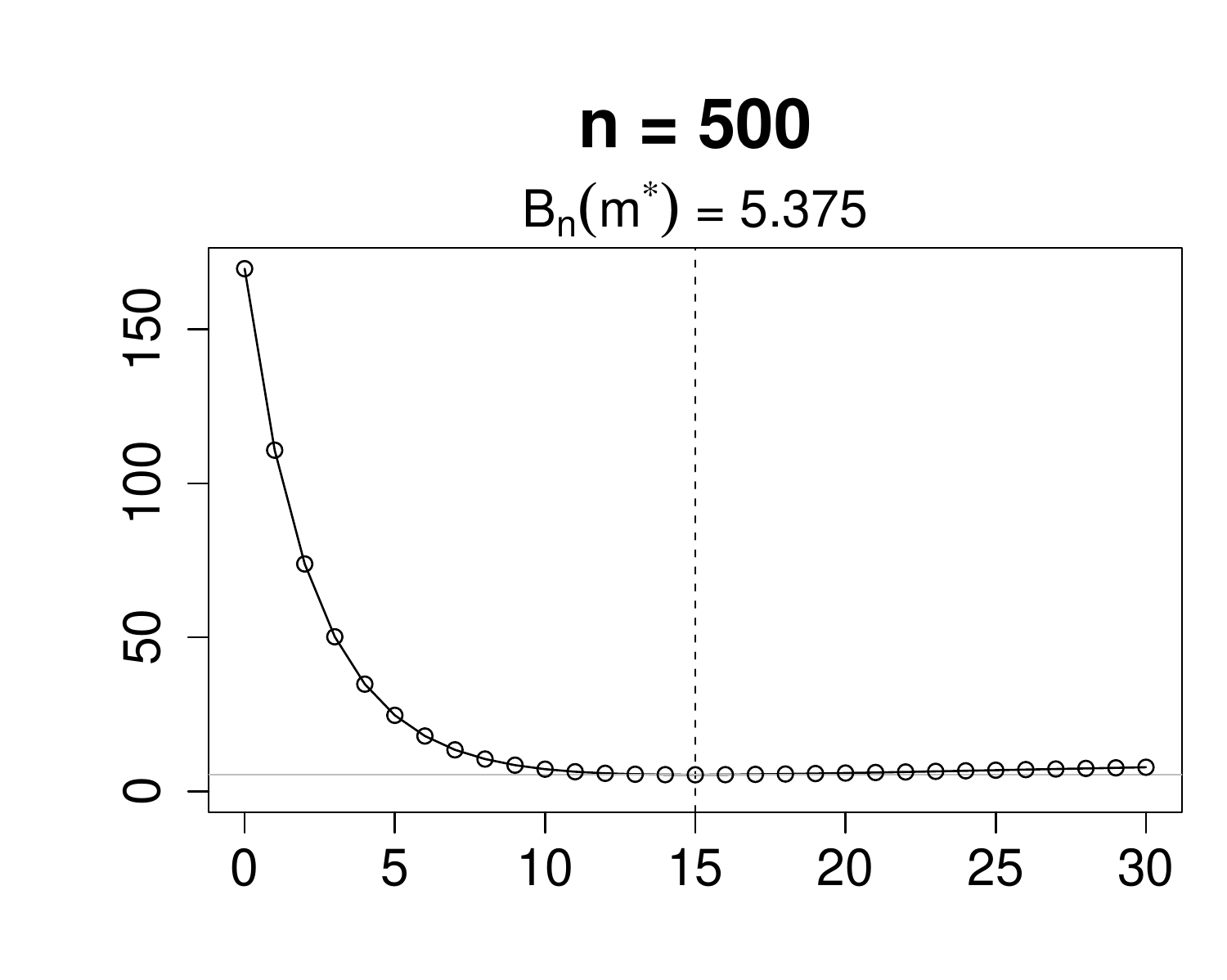}
		\hspace*{-0.55cm}
		\includegraphics[scale = 0.35]{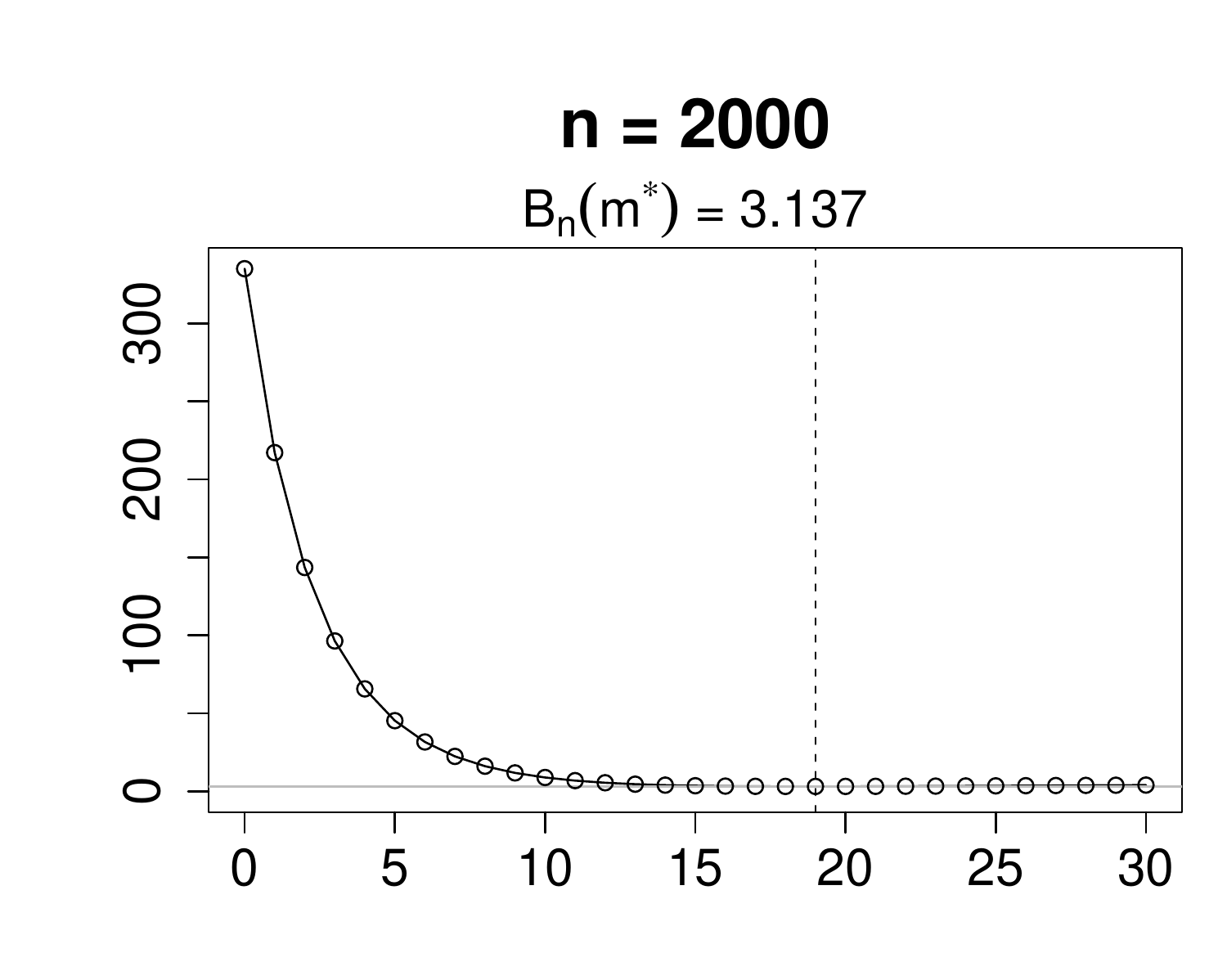}
		
		\includegraphics[scale = 0.35]{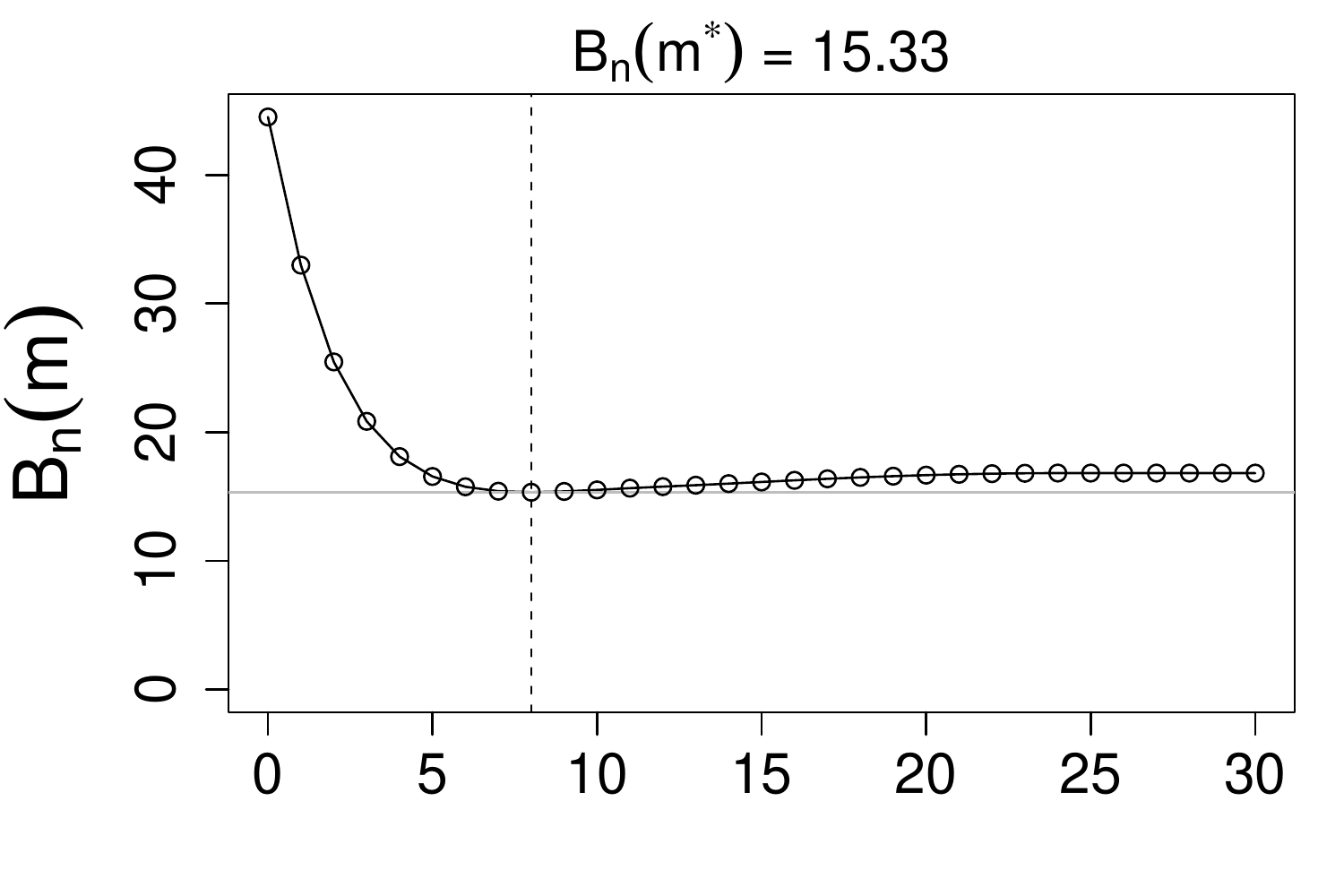}
		\hspace*{-0.55cm}
		\includegraphics[scale = 0.35]{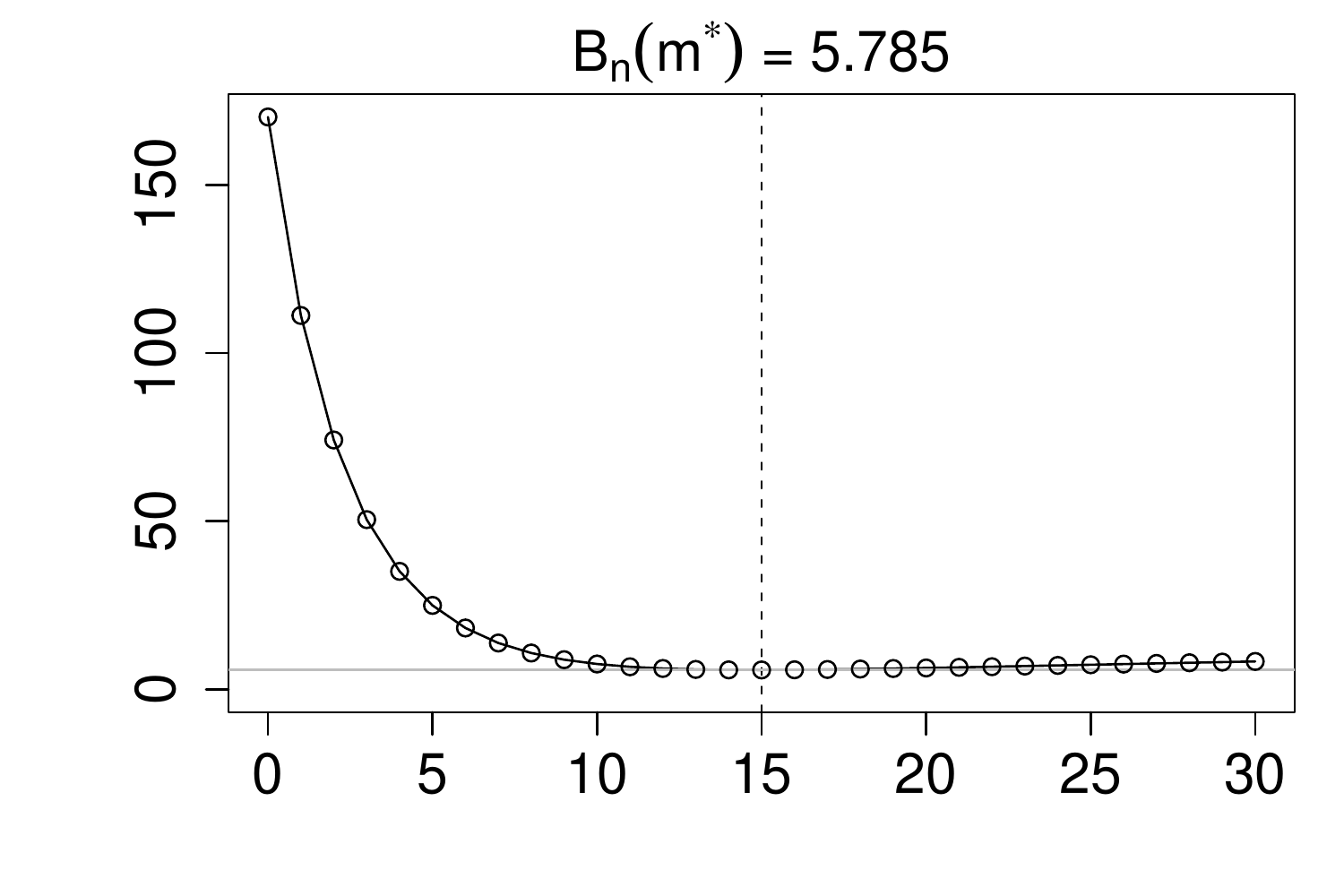}
		\hspace*{-0.55cm}
		\includegraphics[scale = 0.35]{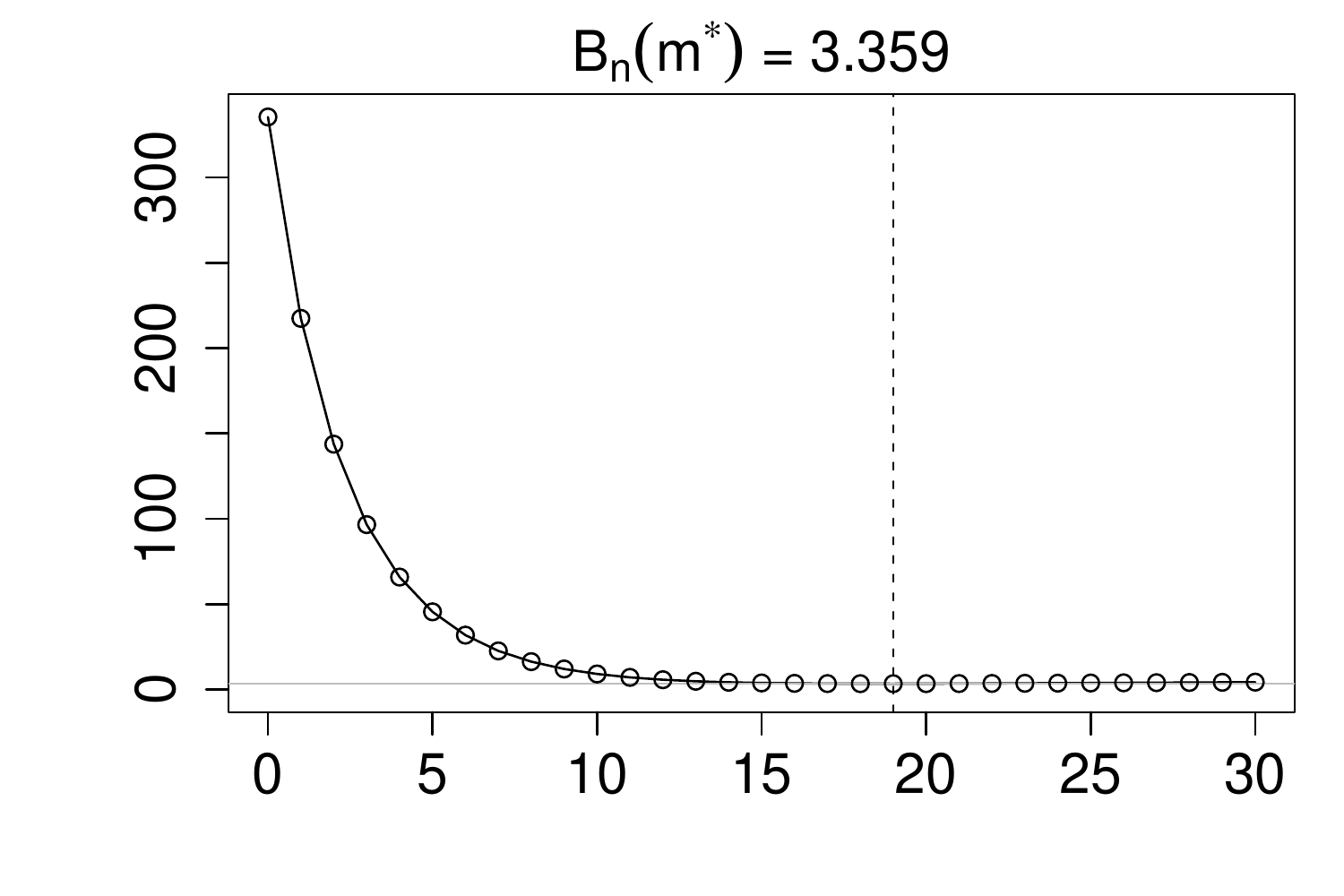}
		
		\includegraphics[scale = 0.35]{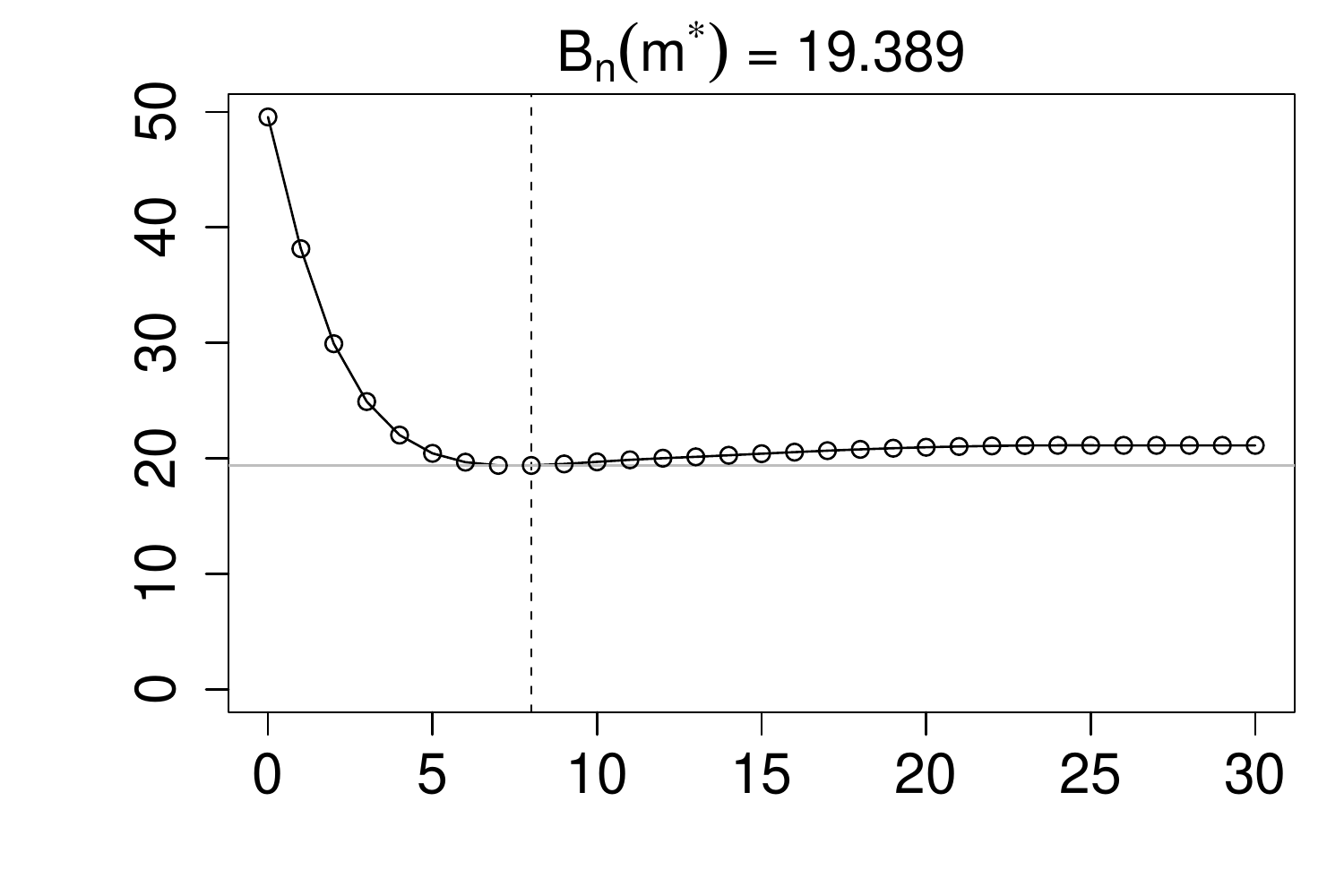}
		\hspace*{-0.55cm}
		\includegraphics[scale = 0.35]{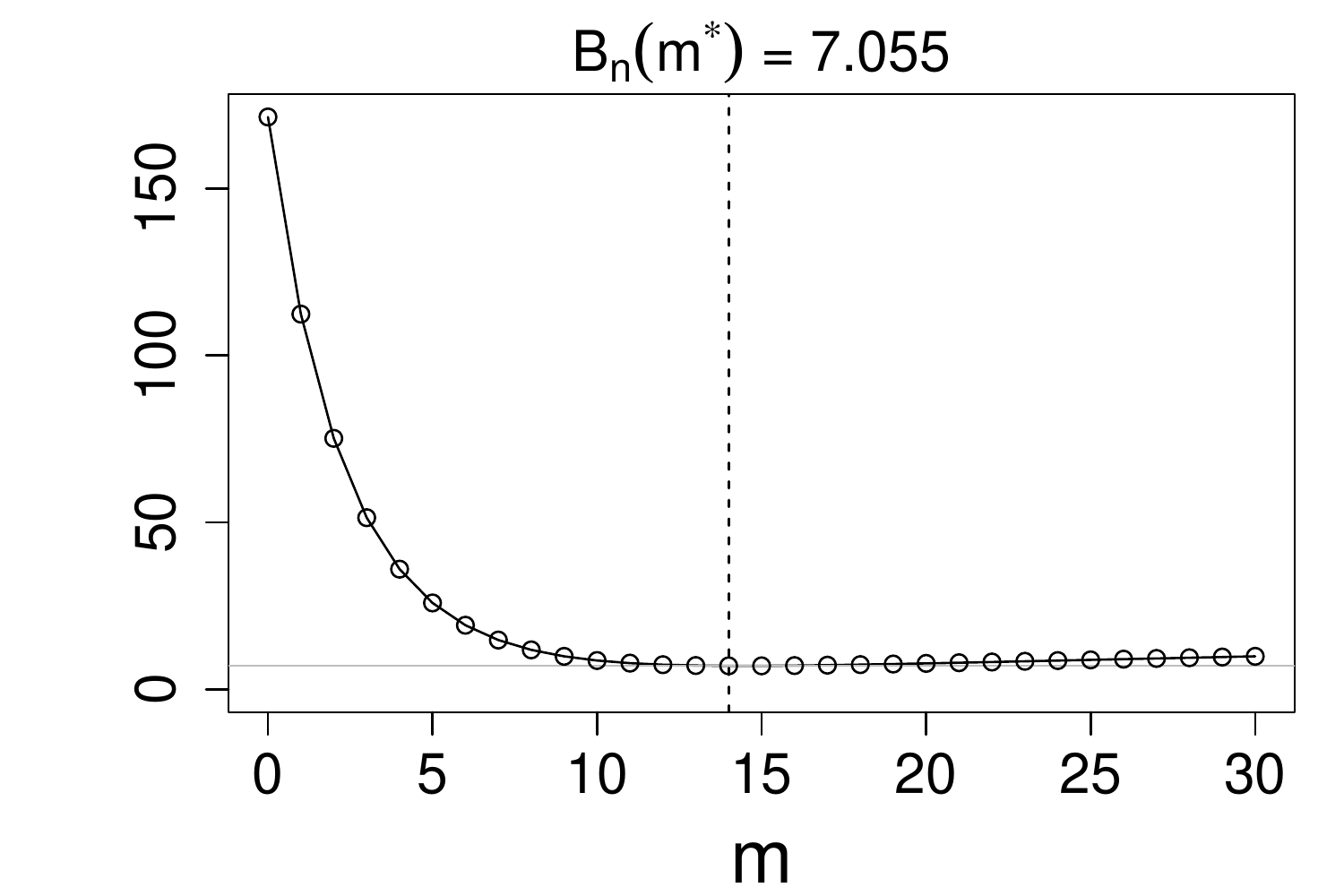}
		\hspace*{-0.55cm}
		\includegraphics[scale = 0.35]{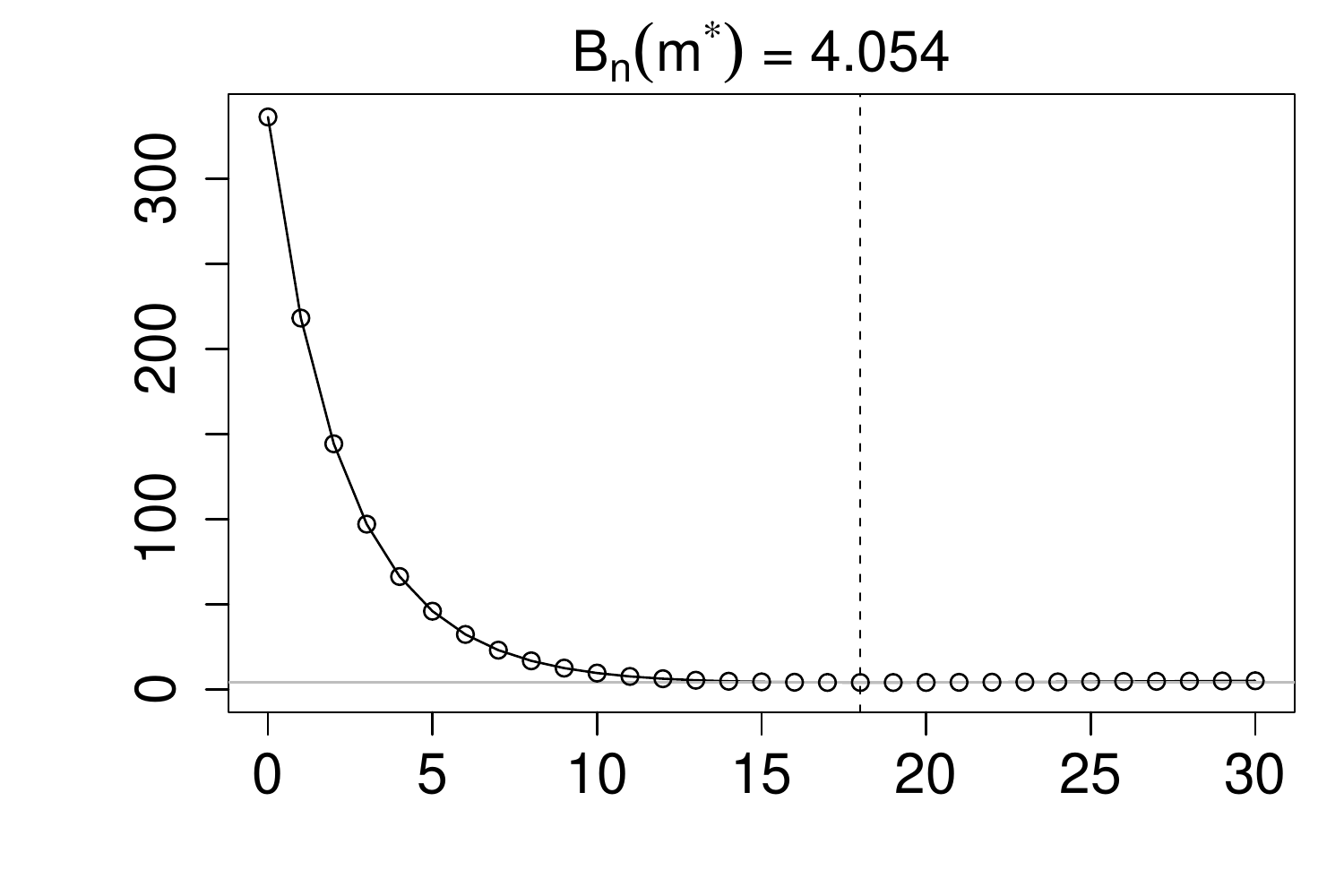}
	\end{minipage}
	\caption{\it Value of the bound from Theorem~\ref{general_theorem} for empirical autocovariances of lag $k = 0$ and for an AR(1) process with $\alpha = 0.7$ as a function of $m$. The dashed vertical line indicates $m^*$ where the minimum is achieved. The gray horizontal line indicates the minimum value.  Top, middle and bottom row show $\varepsilon(t) \sim \mathcal{N}(0,1)$, $\varepsilon(t) \sim \sqrt{12/14} \ t_{14}$ and $\varepsilon(t) \sim \sqrt{7/9} \ t_{9}$, respectively. The left, center, and right columns show $n = 25, 500$, and $2000$, respectively.}
	\label{fig:bnd_fct_m}
\end{figure}

\begin{table}[t]
	\caption{Value of the bound from Theorem~\ref{general_theorem} in combination with \eqref{bnd:Smn_new_bound}, with $m = m^*$ to minimise the bound as described in Section~\ref{sec:comp_bnd}, for empirical autocovariances, for a range of lags $k$ and sample sizes $n$. The data stems from an AR(1) process with $\varepsilon(t) \sim \sqrt{7/9} \ t_{9}$ where $\alpha$ takes a range of values. \label{tab:bnd_t9}}
	\centering
	\begin{tabular}{ll |r@{\extracolsep{0.3cm}}r@{\extracolsep{0.3cm}}r@{\extracolsep{0.1cm}}r@{\extracolsep{0.3cm}}r@{\extracolsep{0.3cm}}r@{\extracolsep{0.3cm}}r@{\extracolsep{0.2cm}}r@{\extracolsep{0.1cm}}r@{\extracolsep{0.2cm}}r}
		\hline
		$k$ & $\alpha$ $\vert$ $n$ & \multicolumn{1}{l}{25} & \multicolumn{1}{l}{50} & \multicolumn{1}{l}{75} & \multicolumn{1}{l}{100} & \multicolumn{1}{l}{150} & \multicolumn{1}{l}{    200} & \multicolumn{1}{l}{250} & \multicolumn{1}{l}{500} & \multicolumn{1}{l}{1000} & \multicolumn{1}{l}{2000} \\ 
		\hline
		0 & 0            &  0.912 &  0.645 &  0.527 &  0.456 &  0.372 &  0.322 &  0.288 &  0.204 &  0.144 &  0.102 \\ 
		& 0.1          & 11.003 &  9.294 &  8.822 &  8.707 &  7.509 &  6.658 &  6.091 &  4.779 &  3.770 &  2.773 \\ 
		& 0.3          & 16.088 & 12.932 & 11.481 & 10.287 &  8.937 &  8.192 &  7.484 &  5.751 &  4.386 &  3.375 \\ 
		& 0.5          & 16.952 & 13.518 & 11.760 & 10.689 &  9.245 &  8.365 &  7.706 &  5.955 &  4.579 &  3.514 \\ 
		& 0.7          & 16.871 & 14.042 & 12.434 & 11.367 &  9.945 &  9.018 &  8.343 &  6.531 &  5.087 &  3.961 \\ 
		1 & 0            &  2.564 &  1.818 &  1.485 &  1.286 &  1.050 &  0.909 &  0.813 &  0.574 &  0.406 &  0.287 \\ 
		& 0.1          &  7.711 &  5.701 &  4.808 &  4.285 &  3.686 &  3.350 &  3.118 &  2.283 &  1.708 &  1.326 \\ 
		& 0.3          &  9.811 &  7.567 &  6.601 &  5.939 &  5.063 &  4.554 &  4.217 &  3.213 &  2.491 &  1.908 \\ 
		& 0.5          & 12.512 &  9.980 &  8.716 &  7.861 &  6.825 &  6.133 &  5.671 &  4.394 &  3.402 &  2.641 \\ 
		& 0.7          & 14.968 & 12.828 & 11.376 & 10.387 &  9.092 &  8.247 &  7.644 &  5.983 &  4.668 &  3.647 \\ 
		2 & 0            &  4.088 &  2.916 &  2.385 &  2.067 &  1.688 &  1.462 &  1.308 &  0.924 &  0.653 &  0.462 \\ 
		& 0.1          & 10.398 &  7.659 &  6.409 &  5.667 &  4.804 &  4.309 &  3.925 &  2.833 &  2.074 &  1.561 \\ 
		& 0.3          & 10.801 &  8.273 &  7.175 &  6.405 &  5.424 &  4.850 &  4.467 &  3.353 &  2.557 &  1.913 \\ 
		& 0.5          & 12.211 &  9.739 &  8.459 &  7.632 &  6.592 &  5.921 &  5.471 &  4.210 &  3.233 &  2.486 \\ 
		& 0.7          & 14.392 & 12.610 & 11.215 & 10.236 &  8.954 &  8.111 &  7.509 &  5.866 &  4.565 &  3.556 \\ 
		\hline
	\end{tabular}
	
	\vspace{1cm}
	
	\caption{Value of the true 1-Wasserstein distance considered in Theorem~\ref{general_theorem} for empirical autocovariances, for a range of lags $k$ and sample sizes $n$. The data stems from an AR(1) process with $\varepsilon(t) \sim \sqrt{7/9} \ t_{9}$ where $\alpha$ takes a range of values. \label{tab:W1_t9}}
	\centering
	\begin{tabular}{ll |rrrrrrrrrr}
		\hline
		$k$ & $\alpha$ $\vert$ $n$ & \multicolumn{1}{l}{    25} & \multicolumn{1}{l}{    50} & \multicolumn{1}{l}{    75} & \multicolumn{1}{l}{   100} & \multicolumn{1}{l}{   150} & \multicolumn{1}{l}{   200} & \multicolumn{1}{l}{   250} & \multicolumn{1}{l}{   500} & \multicolumn{1}{l}{  1000} & \multicolumn{1}{l}{  2000} \\ 
		\hline
		0 & 0            & 0.288 & 0.218 & 0.184 & 0.163 & 0.136 & 0.120 & 0.109 & 0.080 & 0.058 & 0.041 \\ 
		& 0.1          & 0.294 & 0.222 & 0.188 & 0.166 & 0.139 & 0.123 & 0.111 & 0.081 & 0.059 & 0.042 \\ 
		& 0.3          & 0.354 & 0.266 & 0.224 & 0.198 & 0.165 & 0.145 & 0.131 & 0.095 & 0.069 & 0.049 \\ 
		& 0.5          & 0.536 & 0.401 & 0.336 & 0.296 & 0.246 & 0.216 & 0.194 & 0.140 & 0.101 & 0.072 \\ 
		& 0.7          & 1.185 & 0.891 & 0.746 & 0.655 & 0.544 & 0.475 & 0.428 & 0.307 & 0.219 & 0.156 \\ 
		1 & 0            & 0.072 & 0.040 & 0.028 & 0.021 & 0.015 & 0.011 & 0.009 & 0.005 & 0.002 & 0.001 \\ 
		& 0.1          & 0.103 & 0.069 & 0.055 & 0.047 & 0.038 & 0.032 & 0.029 & 0.020 & 0.014 & 0.010 \\ 
		& 0.3          & 0.256 & 0.187 & 0.155 & 0.135 & 0.111 & 0.097 & 0.087 & 0.062 & 0.044 & 0.031 \\ 
		& 0.5          & 0.524 & 0.384 & 0.319 & 0.279 & 0.230 & 0.200 & 0.180 & 0.128 & 0.091 & 0.065 \\ 
		& 0.7          & 1.282 & 0.951 & 0.791 & 0.693 & 0.572 & 0.499 & 0.448 & 0.320 & 0.227 & 0.161 \\ 
		2 & 0            & 0.083 & 0.045 & 0.031 & 0.024 & 0.016 & 0.013 & 0.010 & 0.005 & 0.003 & 0.001 \\ 
		& 0.1          & 0.088 & 0.049 & 0.034 & 0.026 & 0.018 & 0.014 & 0.012 & 0.006 & 0.004 & 0.002 \\ 
		& 0.3          & 0.167 & 0.113 & 0.091 & 0.078 & 0.063 & 0.055 & 0.049 & 0.034 & 0.024 & 0.017 \\ 
		& 0.5          & 0.449 & 0.329 & 0.272 & 0.237 & 0.195 & 0.170 & 0.152 & 0.109 & 0.077 & 0.055 \\ 
		& 0.7          & 1.307 & 0.966 & 0.802 & 0.701 & 0.578 & 0.504 & 0.452 & 0.322 & 0.229 & 0.162 \\ 
		\hline
	\end{tabular} 
\end{table}

In Table~\ref{tab:bnd_t9} the values of the bound $B_n(m^*)$ for different values of $k$, $\alpha$, and $n$ are shown for the case where $\varepsilon(t) \sim \sqrt{7/9} \ t_{9}$. The numbers for the cases where $\varepsilon(t) \sim \mathcal{N}(0,1)$ or $\varepsilon(t) \sim \sqrt{12/14} \ t_{14}$ are shown in Tables \ref{tab:bnd_N01} and \ref{tab:bnd_t14}, respectively, in Section \ref{sec:adSimRes}.
We chose to present the case with the heaviest tails in the main paper, because in this case the convergence of the estimator of the autocovariance and cross-covariance functions to the Gaussian limit is the slowest.
We have omitted considering negative $\alpha$, because in the case considered the results are the same as for $-\alpha$.
It can be seen that the value of the bound increases as $|\alpha|$ increases.
Comparing the bounds across tables we see that for most cases the value of the bound is larger for heavier tails.
It can be seen that the value of the bound decreases as $n$ increases.\enlargethispage{1cm}

\FloatBarrier

For comparison with our bound, as displayed in Tables~\ref{tab:bnd_t9}, \ref{tab:bnd_N01}, and \ref{tab:bnd_t14}, we also present simulated numbers for the true Wasserstein distance in Tables~\ref{tab:W1_t9}, \ref{tab:W1_N01}, and \ref{tab:W1_t14}, with Tables \ref{tab:bnd_N01}--\ref{tab:W1_t14} shown in Section \ref{sec:adSimRes}. Additional details about simulation of the true Wasserstein distance are deferred to Section \ref{sec:SimW1}. By inspection of the numbers, it can be seen that, as expected, our bound is always larger than the true Wasserstein distance obtained by simulation.

\section{Proof of Theorem~\ref{general_theorem_nonstat}}
\label{sec:Proofs}

Before the main proof of this section, we discuss a useful lemma that summarizes the Stein's method result used in this paper which is applicable to a general local dependence condition. Consider a set of random variables $\left\lbrace \xi_i, i \in J \right\rbrace$, for a finite index set $J$.
Then, the local dependence condition is
\begin{itemize}
	\item[(LD)] For each $i \in J$ there exist $A_i \subset B_i\subset J$ such that $\xi_i$ is independent of $\left\lbrace \xi_j : j \notin A_i \right\rbrace$ and $\left\lbrace \xi_j : j \in A_i \right\rbrace$ is independent of $\left\lbrace \xi_k : k \notin B_i \right\rbrace$.
\end{itemize}
For any $A \subset J$, we now denote by
\begin{equation}
\label{tau_eta}
\xi(A) = \sum_{j\in A}^{}\xi_j.
\end{equation}
\begin{remark}Consider an $m$-dependent sequence of random variables $X_1, \ldots, X_n$. Then the sets of random variables $\left\lbrace X_j : j\leq i \right\rbrace$ and $\left\lbrace X_j : j>i+m \right\rbrace$ are independent for each\linebreak $i = 1,\ldots,n$.
	Thus, (LD) is satisfied with $J := \{1,\ldots,n\}$, $A_i := \{\ell \in J : |\ell - i| \leq m\}$, and $B_i := \{\ell \in J : |\ell - i| \leq 2m\}$. 
\end{remark}
The following lemma gives an upper bound on the Wasserstein distance between the distribution of a sum of random variables satisfying Condition (LD) above and the normal distribution. The random variables are assumed to have mean zero and the variance is not necessarily equal to one. The proof is in Section \ref{app:proofs1} and is based on the steps followed for the proof of Theorem 4.13 in p.134 of \cite{Chen_et_al_book2011}.
\begin{lemma}
	\label{Lemma_locally_dependent}
	Let $\left\lbrace\xi_i, i \in J\right\rbrace$ be an $\R$-valued random field with mean zero, satisfying Condition (LD). Denote $W := \sum_{i \in J}\xi_i$ and assume that $0 < \sigma^2 := {\rm var}(W) < \infty$.
	Then, with $\xi(A)$ as in \eqref{tau_eta}, we have for the Wasserstein distance, $d_{{\mathrm{W}}}$, defined in \eqref{classes_functions}, that
	\begin{equation}
	\nonumber d_{{\mathrm{W}}}(\mathcal{L}(W),\mathcal{N}(0,\sigma^2)) \leq \frac{2}{\sigma^3}\sum_{i \in J}\left\lbrace \E\left|\left(\xi_i\xi(A_i) - \E(\xi_i\xi(A_i))\right)\xi(B_i)\right| + \frac{1}{2}\E\left|\xi_i(\xi(A_i))^2\right| \right\rbrace.
	\end{equation}
\end{lemma}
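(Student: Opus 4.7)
The plan is a direct application of Stein's method in the classical form tailored to local dependence. For a $1$-Lipschitz test function $h\in\mathcal{H}$, let $f=f_h$ solve the Stein equation $\sigma^{2}f'(w)-wf(w)=h(w)-\mathbb{E}[h(N)]$ with $N\sim\mathcal{N}(0,\sigma^{2})$. Standard Stein-solution estimates (derived by the scaling $g(u)=f(\sigma u)$, which reduces to the standard-normal equation where $\|g''\|_{\infty}\le 2\|h'\|_{\infty}$) yield a bound on $\|f''\|_{\infty}$ in terms of a negative power of $\sigma$; since $|\mathbb{E}h(W)-\mathbb{E}h(N)|=|\mathbb{E}[\sigma^{2}f'(W)-Wf(W)]|$, it suffices to bound this quantity by the expression in braces, multiplied by $\|f''\|_{\infty}$, and then take the supremum over $h$.

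First I would decompose $\mathbb{E}[Wf(W)]=\sum_{i\in J}\mathbb{E}[\xi_i f(W)]$. For each $i$, condition (LD) makes $\xi_i$ independent of $W-\xi(A_i)$; combined with $\mathbb{E}\xi_i=0$ this gives $\mathbb{E}[\xi_i f(W-\xi(A_i))]=0$, so $\mathbb{E}[\xi_i f(W)]=\mathbb{E}[\xi_i\{f(W)-f(W-\xi(A_i))\}]$. Taylor-expanding $f$ around $W$ in a form valid for either sign of $\xi(A_i)$,
\[
f(W)-f(W-\xi(A_i))=\xi(A_i)\,f'(W)-\int_{0}^{\xi(A_i)}\bigl(\xi(A_i)-t\bigr)\,f''(W-t)\,dt,
\]
with the remainder integral bounded in absolute value by $\tfrac{1}{2}\|f''\|_{\infty}\,\xi(A_i)^{2}$. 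Summing over $i$ I obtain
\[
\mathbb{E}[Wf(W)]=\mathbb{E}\bigl[V\,f'(W)\bigr]+R,\qquad V:=\sum_{i\in J}\xi_i\,\xi(A_i),
\]
where $|R|\le\tfrac{1}{2}\|f''\|_{\infty}\sum_{i\in J}\mathbb{E}|\xi_i(\xi(A_i))^{2}|$, which already matches the second term in the desired bound.

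The crucial identity is $\mathbb{E}V=\sigma^{2}$. Indeed, since $\xi_i$ is orthogonal to $\{\xi_j:j\notin A_i\}$ by (LD), one has $\mathbb{E}[\xi_i\,\xi(A_i)]=\mathbb{E}[\xi_i W]$, and summing gives $\sum_i\mathbb{E}[\xi_i W]=\mathbb{E}[W^{2}]=\sigma^{2}$. Therefore
\[
\sigma^{2}\mathbb{E}[f'(W)]-\mathbb{E}[V f'(W)]=-\sum_{i\in J}\mathbb{E}\bigl[\bigl(\xi_i\xi(A_i)-\mathbb{E}\xi_i\xi(A_i)\bigr)\,f'(W)\bigr].
\]
For each summand I invoke the second part of (LD): the product $\xi_i\xi(A_i)$ is measurable with respect to $\sigma(\xi_j:j\in A_i)$, which is independent of $\sigma(\xi_k:k\notin B_i)$, hence of $W^{(i)}:=W-\xi(B_i)$. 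Because $\xi_i\xi(A_i)-\mathbb{E}\xi_i\xi(A_i)$ has mean zero, $\mathbb{E}\bigl[\bigl(\xi_i\xi(A_i)-\mathbb{E}\xi_i\xi(A_i)\bigr)f'(W^{(i)})\bigr]=0$, so $f'(W)$ can be replaced by $f'(W)-f'(W^{(i)})$; bounding the latter by $\|f''\|_{\infty}|\xi(B_i)|$ produces the first term in the claimed estimate.

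Collecting the three pieces, substituting the Stein bound on $\|f''\|_{\infty}$, and taking the supremum over $h\in\mathcal{H}$ completes the argument. The principal technical care lies in (a) the careful bookkeeping of which of the two independence assertions of (LD) is being invoked at each step, and (b) verifying that the integral remainder in the Taylor expansion produces the factor $\tfrac{1}{2}$ for either sign of $\xi(A_i)$; everything else is routine computation.
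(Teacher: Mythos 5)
Your proposal is correct and follows essentially the same route as the paper's proof: the same Stein equation, the same two-stage use of (LD) (first to center $f(W)$ against $f(W-\xi(A_i))$, then to center $f'(W)$ against $f'(W-\xi(B_i))$), the identity $\sum_i\E[\xi_i\xi(A_i)]=\sigma^2$, and the bound $\|f''\|\le 2\|h'\|/\sigma^3$. The only cosmetic difference is that you use the integral form of the Taylor remainder where the paper uses the Lagrange form; both yield the factor $\tfrac12$.
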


\smallskip

\noindent
{\textbf{Proof of Theorem~\ref{general_theorem_nonstat}}}.
Firstly, for $Z^{*}(t) = \cO{X}{a}{t+k}\cO{X}{b}{t} - \frac{n}{n-k} \gamma$ we have that
\begin{equation}
\label{new_representation}
\sqrt{n}\left(\cPest{\gamma}{ab}{k} - \gamma\right) = \sqrt{n}\left(\frac{1}{n}\sum_{t=1}^{n-k}\cO{X}{a}{t+k}\cO{X}{b}{t} - \gamma\right) = \frac{1}{\sqrt{n}}\sum_{t=1}^{n-k} Z^{*}(t).
\end{equation}
For $\tilde{Z}(t) = \cO{Y}{a}{t+k}\cO{Y}{b}{t} - \E[\cO{Y}{a}{t+k}\cO{Y}{b}{t}]$, the triangle inequality and \eqref{new_representation} yield
\begin{equation*}
d_{{\mathrm{W}}}\left(\mathcal{L}\left(\sqrt{n}\left(\cPest{\gamma}{ab}{k} - \gamma\right)\right),\mathcal{N}(0,\sigma^2)\right) = d_{{\mathrm{W}}}\left(\mathcal{L}\left(\frac{1}{\sqrt{n}}\sum_{t=1}^{n-k} Z^{*}(t)\right), \mathcal{N}(0, \sigma^2)\right) \leq D_1 + D_2 + D_3,
\end{equation*}
where
\begin{align}
& \label{T1} D_1 := d_{{\mathrm{W}}}\left(\mathcal{L}\left(\frac{1}{\sqrt{n}}\sum_{t=1}^{n-k} Z^{*}(t)\right), \mathcal{L}\left(\frac{1}{\sqrt{n}}\sum_{t=1}^{n-k} \tilde{Z}(t)\right) \right),\\
\label{T2}
& D_2 := d_{{\mathrm{W}}}\left(\mathcal{L}\left(\frac{1}{\sqrt{n}}\sum_{t=1}^{n-k} \tilde{Z}(t)\right), \mathcal{N}\left(0, \tilde{\Sigma}_{ab}(k)\right)\right), \\
\label{T3}
& D_3 := d_{{\mathrm{W}}}\left(\mathcal{N}\left(0, \tilde{\Sigma}_{ab}(k)\right), \mathcal{N}\left(0, \sigma^2\right)\right),
\end{align}
where $\tilde{\Sigma}_{ab}(k)$ is as in \eqref{def:tildeSigma}. We now proceed to find upper bounds for \eqref{T1}, \eqref{T2} and \eqref{T3}.

\smallskip

\noindent
{\textbf{Bound for \eqref{T1}}:} With $h \in \mathcal{H}$, since $\|h\|_{\mathrm{Lip}} \leq 1$, then
\begin{align}
\nonumber &  \left|\E\left[h\left(\frac{1}{\sqrt{n}}\sum_{t=1}^{n-k} Z^{*}(t) \right) - h\left(\frac{1}{\sqrt{n}}\sum_{t=1}^{n-k} \tilde{Z}(t) \right)\right] \right| \leq \frac{1}{\sqrt{n}}\sum_{t=1}^{n-k}\E\left| Z^{*}(t) - \tilde{Z}(t) \right|\\
\label{T1b} & \leq \frac{1}{\sqrt{n}}\sum_{t=1}^{n-k}\E\left|\cO{X}{a}{t+k}\cO{X}{b}{t} - \cO{Y}{a}{t+k}\cO{Y}{b}{t}\right| \\
\nonumber & \quad + \frac{1}{\sqrt{n}}\sum_{t=1}^{n-k}\left| \frac{n}{n-k} \gamma - \E[\cO{Y}{a}{t+k}\cO{Y}{b}{t}]\right|.
\end{align}
Next, we bound $\eqref{T1b} \leq \frac{1}{\sqrt{n}}\sum_{t=1}^{n-k} K_t$, by a telescoping sum argument, made precise by Lemma~\ref{lem:approx_jnt_moments_nonstat} with $\alpha = 1$, $ p = 2$, $X_1 := \cO{X}{a}{t+k}$, $X_2 := \cO{X}{b}{t}$, $Y_1 := \cO{Y}{a}{t+k}$, and $Y_2 := \cO{Y}{b}{t}$. Therefore, we have
\begin{equation}
\label{bound_T1}
\eqref{T1} \leq \frac{1}{\sqrt{n}}\sum_{t=1}^{n-k} K_t + \frac{1}{\sqrt{n}}\sum_{t=1}^{n-k}\left| \frac{n}{n-k} \gamma - \E[\cO{Y}{a}{t+k}\cO{Y}{b}{t}]\right|.
\end{equation}

\smallskip

\noindent
{\textbf{Bound for \eqref{T2}}:} Here we use Stein's method. Let $W := \frac{1}{\sqrt{n}}\sum_{t=1}^{n-k}\tilde{Z}(t)$ and note that $\E(\tilde{Z}(t)) = 0$ and ${\rm var}(W) = \tilde{\Sigma}_{ab}(t)$. Lemma~\ref{Lemma_locally_dependent} then yields that
\begin{equation}
\label{bound_second_term}
\eqref{T2} \leq \frac{2}{\left(\tilde{\Sigma}_{ab}(k)\right)^{3/2}}\sum_{t=1}^{n-k} \tilde Q_t.
\end{equation}

\smallskip

\noindent
{\textbf{Bound for \eqref{T3}}:} 
Using the results in pages 69 and~70 of \cite{Malliavin_Calculus}, it is straightforward to conclude that
\begin{equation}
\label{bound_third_term}
\eqref{T3} \leq \frac{\sqrt{2}}{\sqrt{\pi\cO{\Sigma}{ab}{k}}}\left|\cO{\Sigma}{ab}{k} - \tilde{\Sigma}_{ab}(k)\right|.
\end{equation}
Combining \eqref{bound_T1}, \eqref{bound_second_term} and \eqref{bound_third_term}, yields the result of Theorem \ref{general_theorem_nonstat} as in \eqref{final_upper_bound_nonstat}.
\hfill$\square$

\section{Discussion}
\label{sec:Discussion}
In this article, we have obtained upper bounds on the Wasserstein distance between the true distribution of the estimator of the autocovariance and cross-covariance functions and their limiting Gaussian distribution for non-stationary and stationary data. Compared with existing results in the literature for general linear statistics and apart from the machinery employed (partly based on Stein's method) and the proof methodology followed, the results presented in this paper are novel in three main aspects. Firstly, the results of the paper are applicable to non-stationary data sequences; see Theorem~\ref{general_theorem_nonstat}. Secondly, our focus is not only on rates of convergence, but the derived bounds are fully explicit in terms of the sample size, the lag and the constants depending on the time series model. This allows to compute the bound in examples. Thirdly, the assumptions that we have used are non-restrictive, and they are partly based on an $m$-dependence approximation of the original time series, which is convenient to work with in applications. In contrast, existing results are focused on rather more restrictive structures that are often probabilistic in nature and difficult to verify in practice, such as the case of strong and $\varphi$-mixing conditions or the discrete time martingales setting.

\bibliography{paper}

\begin{thebibliography}{}

\bibitem[Anastasiou, 2017]{Anastasiou_mdependence}
Anastasiou, A. (2017).
\newblock Bounds for the normal approximation of the maximum likelihood
  estimator from $m$-dependent random variables.
\newblock {\em Stat. Probab. Lett.}, 129:171--181.

\bibitem[Anastasiou et~al., 2019]{ABE_2019}
Anastasiou, A., Balasubramanian, K., and Erdogdu, M.~A. (2019).
\newblock Normal {A}pproximation for {S}tochastic {G}radient {D}escent via
  {N}on-{A}symptotic {R}ates of {M}artingale {CLT}.
\newblock In {\em Proceedings of the Thirty-Second Conference on Learning
  Theory}, volume~99 of {\em Proceedings of Machine Learning Research}, pages
  115--137.

\bibitem[Anderson, 1971]{Anderson1971}
Anderson, T.~W. (1971).
\newblock {\em The Statistical Analysis of Time Series}.
\newblock Wiley, New York.

\bibitem[Aue et~al., 2009]{AueEtAl2009}
Aue, A., H\"{o}rmann, S., Horv\'{a}th, L., and Reimherr, M. (2009).
\newblock Break detection in the covariance structure of multivariate time
  series models.
\newblock {\em Ann. Statist.}, 37(6B):4046--4087.

\bibitem[Brillinger, 1975]{Brillinger1975}
Brillinger, D.~R. (1975).
\newblock {\em Time Series: Data Analysis and Theory}.
\newblock Holt, Rinehart and Winston, Inc., New York.

\bibitem[Brockwell and Davis, 2006]{BrockwellDavis2006}
Brockwell, P.~J. and Davis, R.~A. (2006).
\newblock {\em Time Series: Theory and Methods}.
\newblock Springer, New York, 2 edition.

\bibitem[Chen et~al., 2011]{Chen_et_al_book2011}
Chen, L. H.~Y., Goldstein, L., and Shao, Q.-M. (2011).
\newblock {\em Normal {A}pproximation by {S}tein's {M}ethod}.
\newblock Probability and its Applications. Springer.

\bibitem[Dedecker et~al., 2009]{Dedecker_Merlevede}
Dedecker, J., Merlev\`ede, F., and Rio, E. (2009).
\newblock Rates of convergence for minimal distances in the central limit
  theorem under projective criteria.
\newblock {\em Electronic Journal of Probability}, 14:978--1011.

\bibitem[Dedecker and Rio, 2008]{DedeckerRio2008}
Dedecker, J. and Rio, E. (2008).
\newblock {On mean central limit theorems for stationary sequences}.
\newblock {\em Annales de l'Institut Henri Poincaré, Probabilités et
  Statistiques}, 44(4):693 -- 726.

\bibitem[Fan, 2019]{Fan_2019}
Fan, X. (2019).
\newblock Exact rates of convergence in some martingale central limit theorems.
\newblock {\em Journal of Mathematical Analysis and Applications},
  469:1028--1044.

\bibitem[Fan and Ma, 2020]{Fan_Ma_2020}
Fan, X. and Ma, X. (2020).
\newblock On the {W}asserstein distance for a martingale central limit theorem.
\newblock {\em Statistics and Probability Letters}, 167:108892.

\bibitem[Horv\'ath and Kokoszka, 2008]{HorvathKokoszka2008}
Horv\'ath, L. and Kokoszka, P. (2008).
\newblock Sample autocovariances of long-memory time series.
\newblock {\em Bernoulli}, 14(2):405--418.

\bibitem[Jirak, 2012]{Jirak2012}
Jirak, M. (2012).
\newblock Simultaneous confidence bands for {Y}ule-{W}alker estimators and
  order selection.
\newblock {\em Annals of Statistics}, 40(1):494--528.

\bibitem[Jirak, 2014]{Jirak2014}
Jirak, M. (2014).
\newblock Simultaneous confidence bands for sequential autoregressive fitting.
\newblock {\em Journal of Multivariate Analysis}, 124:130 -- 149.

\bibitem[Jirak, 2016]{Jirak2016}
Jirak, M. (2016).
\newblock Berry-{E}sseen theorems under weak dependence.
\newblock {\em Annals of Probability}, 44:2024--2063.

\bibitem[Kley et~al., 2019]{Kley_etal2019}
Kley, T., Preu{\ss}, P., and Fryzlewicz, P. (2019).
\newblock {Predictive, finite-sample model choice for time series under
  stationarity and non-stationarity}.
\newblock {\em Electronic Journal of Statistics}, 13(2):3710 -- 3774.

\bibitem[Munk and Czado, 1998]{MunkCzado1998}
Munk, A. and Czado, C. (1998).
\newblock Nonparametric {V}alidation of {S}imilar {D}istributions and
  {A}ssessment of {G}oodness of {F}it.
\newblock {\em Journal of the Royal Statistical Society. Series B (Statistical
  Methodology)}, 60(1):223--241.

\bibitem[Nourdin and Peccati, 2012]{Malliavin_Calculus}
Nourdin, I. and Peccati, G. (2012).
\newblock {\em Normal {A}pproximations with {M}alliavin {C}alculus : {F}rom
  {S}tein's {M}ethod to {U}niversality.}
\newblock Number Vol. 192 in Cambridge Tracts in Mathematics. Cambridge
  University Press.

\bibitem[R\"ollin, 2018]{Roellin2018}
R\"ollin, A. (2018).
\newblock On quantitative bounds in the mean martingale central limit theorem.
\newblock {\em Statistics and Probability Letters}, 138:171--176.

\bibitem[Stein, 1972]{Stein1972}
Stein, C. (1972).
\newblock A bound for the error in the normal approximation to the distribution
  of a sum of dependent random variables.
\newblock In {\em Proceedings of the {S}ixth {B}erkeley {S}ymposium on
  {M}athematical {S}tatistics and {P}robability}, volume~{\bf 2}, pages
  586--602. Berkeley: University of California Press.

\bibitem[Sunklodas, 2007]{Sunklodas_2007}
Sunklodas, J. (2007).
\newblock On {N}ormal {A}pproximation for {S}trongly {M}ixing {R}andom
  {V}ariables.
\newblock {\em Acta Appl Math}, 97:251--260.

\bibitem[Sunklodas, 2011]{Sunklodas_2011}
Sunklodas, J. (2011).
\newblock Some estimates of the normal approximation for $\varphi$-mixing
  random variables.
\newblock {\em Lithuanian Mathematical Journal}, 51:260--273.

\end{thebibliography}


\clearpage

{\noindent\Huge\textbf{Supporting Information for\medskip\\
		\Large``Wasserstein distance bounds on the normal approximation of empirical autocovariances and cross-covariances under non-stationarity and stationarity''}}
\appendix
\section{Technical Details regarding Section~\ref{sec:bound_mdep_appr_known}}\label{app:tech_details:Sec23}
Here, we explain how the right-hand side of~\eqref{bnd:Smn_new_bound} can be computed from cumulants, up to order~8, of $\seq{\vO{Y}{t}}$ that is centered and weakly stationary. First, note that
\[ \E \Big( \tilde{Z}(t) \tilde{Z}( A_t) \Big)^2 = \Var \Big( \tilde{Z}(t) \tilde{Z}( A_t) \Big) + \Bigg[\E \Big( \tilde{Z}(t) \tilde{Z}( A_t) \Big)\Bigg]^2,\]
and
\begin{equation*}
\begin{split}
& \Var \Big( \tilde{Z}(t)  \tilde{Z}( A_t) \Big)
= \sum_{j_1 \in A_t} \sum_{j_2 \in A_t} \Cov \big( \tilde{Z}(t) \tilde{Z}(j_1), \tilde{Z}(t) \tilde{Z}(j_2) \big) \\
&  = \sum_{j_1 \in A_t} \sum_{j_2 \in A_t} \left[ \cum ( \tilde{Z}(t), \tilde{Z}(j_1), \tilde{Z}(t), \tilde{Z}(j_2) )
+ \cum ( \tilde{Z}(t), \tilde{Z}(t)) \cum ( \tilde{Z}(j_1), \tilde{Z}(j_2) )\right.\\
&\qquad\qquad\qquad\left. + \cum ( \tilde{Z}(t), \tilde{Z}(j_2)) \cum ( \tilde{Z}(j_1), \tilde{Z}(t) ) \right] \\
& = \sum_{j_1 \in A_t} \sum_{j_2 \in A_t} \cum ( \tilde{Z}(t), \tilde{Z}(t), \tilde{Z}(j_1), \tilde{Z}(j_2) )
+ \tilde C(0) \Var\Big(\sum_{j \in A_t} \tilde{Z}(j) \Big)
+ \Big[ \E \Big( \tilde{Z}(t) \sum_{j \in A_t} \tilde{Z}(j) \Big) \Big]^2,
\end{split}
\end{equation*}
where
\begin{equation}\label{def:Cu}
\begin{split}
\tilde C(u) := \cum(\tilde{Z}(0), \tilde{Z}(u))
& = \cum(\cO{Y}{a}{k}, \cO{Y}{b}{0}, \cO{Y}{a}{u+k}, \cO{Y}{b}{u}) \\
& \quad + \cO{\tilde\gamma}{aa}{u} \cO{\tilde\gamma}{bb}{u}
+ \cO{\tilde\gamma}{ab}{k-u} \cO{\tilde\gamma}{ab}{k+u},
\end{split}
\end{equation}
with $\cO{\tilde\gamma}{ab}{k} := \cum(\cO{Y}{a}{k}, \cO{Y}{b}{0})$.

Noting that $A_t$ and $B_t$ in \eqref{tildeS_mn} are finite sets of consecutive integers of the form $\{s, \ldots, e\}$, we conclude that it suffices to compute or bound $\tilde C(0)$ and the following three quantities
\begin{equation}\label{bnd_parts}
\sum_{j_1=s}^e \sum_{j_2=s}^e \cum ( \tilde{Z}(t), \tilde{Z}(t), \tilde{Z}(j_1), \tilde{Z}(j_2) ), \quad 
\Var\Big(\sum_{j=s}^e \tilde{Z}(j) \Big), \quad \text{and} \quad
\E \Big( \tilde{Z}(t) \sum_{j=s}^e \tilde{Z}(j) \Big),
\end{equation}
where $s \leq t \leq e$. The second and the third quantity in~\eqref{bnd_parts} only depend on second and fourth order moments of $\vO{Y}{t}$ and, in this sense, are easier to obtain. We have that
\begin{equation*}
\begin{split}
\Var\Big(\sum_{j=s}^e \tilde{Z}(j) \Big)
& = \sum_{j_1=s}^e \sum_{j_2=s}^e \Cov( \tilde{Z}(0), \tilde{Z}(j_2 - j_1))
= \sum_{|u| \leq e-s} (e-s+1-|u|) \Cov( \tilde{Z}(0), \tilde{Z}(u) ) \\
& = (e-s+1) \sum_{|u| \leq e-s} \Big(1-\frac{|u|}{e-s+1}\Big) \tilde C(u),
\end{split}
\end{equation*}
with $\tilde C(u)$ as defined in~\eqref{def:Cu},
and similarly
\begin{equation*}
\begin{split}
\E \Big( \tilde{Z}(t) \sum_{j=s}^e \tilde{Z}(j) \Big)
& = \sum_{j=s}^e \Cov(\tilde{Z}(0), \tilde{Z}(j-t)) = \sum_{u=s-t}^{e-t} \Cov(\tilde{Z}(0), \tilde{Z}(u))
= \sum_{u=s-t}^{e-t} \tilde C(u).
\end{split}
\end{equation*}
For the first term in~\eqref{bnd_parts}, we have, by Theorem 2.3.2 in \cite{Brillinger1975}, that
\begin{equation}\label{cumEight}
\begin{split}
& \sum_{j_1=s}^e \sum_{j_2=s}^e \cum (\tilde{Z}(t), \tilde{Z}(t), \tilde{Z}(j_1), \tilde{Z}(j_2) )
= \sum_{j_1=s}^e \sum_{j_2=s}^e \sum_{\nu} \prod_{r=1}^R \cum ( \tilde Y_{i,j} : (i,j) \in \nu_r )
\end{split}
\end{equation}
where $\tilde Y_{i,j}$ are defined in Table~\ref{tab1}(b) and the sum is with respect to all indecomposable partitions $\nu := \{\nu_1, \ldots, \nu_R\}$, where $|\nu_r| \geq 2$, of Table~\ref{tab1}(a).
\begin{table}[t]
	\centering
	\begin{subtable}[h]{0.35\textwidth}
		\centering
		\begin{tabular}{cc}
			\toprule
			(1,1) & (1,2) \\
			(2,1) & (2,2) \\
			(3,1) & (3,2) \\
			(4,1) & (4,2) \\
			\bottomrule
		\end{tabular}
		\caption{Table of indices to be partitioned}
		\label{tab:1a}
	\end{subtable}
	\hspace{1cm}
	\begin{subtable}[h]{0.55\textwidth}
		\centering
		\begin{tabular}{ll}
			\toprule
			$\tilde Y_{1,1} := \cO{Y}{a}{t+k}$   & $\tilde Y_{1,2} := \cO{Y}{b}{t}$ \\
			$\tilde Y_{2,1} := \cO{Y}{a}{t+k}$   & $\tilde Y_{2,2} := \cO{Y}{b}{t}$ \\
			$\tilde Y_{3,1} := \cO{Y}{a}{j_1+k}$ & $\tilde Y_{3,2} := \cO{Y}{b}{j_1}$ \\
			$\tilde Y_{4,1} := \cO{Y}{a}{j_2+k}$ & $\tilde Y_{4,2} := \cO{Y}{b}{j_2}$ \\
			\bottomrule
		\end{tabular}
		\caption{Variables of which the cumulants are considered}
		\label{tab1b}
	\end{subtable}
	\caption{Table of indices and variables to be partitioned for the addends of the sum in~\eqref{cumEight}.\label{tab1}}
\end{table}

The number of ways to partition a table with 8 entries is the 8th Bell number $B_8 = 4140$. It is straight forward to enumerate all partitions algorithmically, for example using the \proglang{R} function \code{all_indecomposable_partitions()} that we provide as part of the replication package, and to verify that only 545 of them are indecomposable and have at least two elements for each of the sets of the partitions and are therefore the ones to consider for distributions with $\E(\vO{Y}{t}) = 0$. For the case where all cumulants of odd orders vanish, such as in the examples of Section~\ref{sec:expl}, we only consider partitions where the number of elements in the sets is even and there are 249 such partitions. Further, for the case where $\vO{Y}{t}$ is centered and normally distributed, only indecomposable partitions with sets of exactly 2 elements will be considered and there are 48 of them.

\section{Technical details regarding Section \ref{sec:bound_original_data}}
\label{app:lemmas_for_S2.5}
We provide Lemmas~\ref{lem:bnd_var_mdep_est} and~\ref{lem:bnd1_Smn} that can be used to yield a bound on~\eqref{final_upper_bound} in terms of quantities defined only via $\seq{\vO{X}{t}}$ and the approximation errors $\tilde D_{j}^{(q)}$ from Assumption~\ref{as:m_dep_appr}. For the statement of the results in this section, we will define, in~\eqref{Kqm}, the quantities $\tilde K_{p}^{(\alpha)}$. Next, we state Lemma~\ref{lem:approx_jnt_moments} which asserts that $\tilde K_{p}^{(\alpha)}$ is a bound for the approximation error (measured in~$L^{\alpha}$) of products of $\seq{\vO{X}{t}}$ by the corresponding products of $\seq{\vO{Y}{t}}$.

\begin{lemma}\label{lem:approx_jnt_moments}
	Let $\alpha \geq 1$, $p \in \IN$, $a_1, \ldots,a_p \in \{1,\ldots,d\}$, and $t_1, \ldots, t_p \in \IZ$.
	In addition, let $\seq{\vO{X}{t}}$ be $d$-variate, centered and weakly stationary, and assume $\| \vO{X}{0} \|_{\alpha p} < \infty$.
	Grant Assumption~\ref{as:m_dep_appr} with $q = \alpha p$.
	Define
	\begin{equation}
	\label{Kqm}
	\tilde K^{(\alpha)}_{p} = \tilde K^{(\alpha)}_{p}(\{a_1, \ldots, a_p\}):= \sum_{(\ell_1, \ldots, \ell_p) \in \Lambda_p} \prod_{i=1}^p \left(\tilde D_{a_i}^{(\alpha p)} \right)^{\ell_i} \| \cO{X}{a_i}{0} \|_{\alpha p}^{1-\ell_i},
	\end{equation}
	where $\tilde D_{a_i}^{(\alpha p)}$ is as in \eqref{A:mdep}, $\Lambda_1 := \{1\}$, $\Lambda_2 := \{(1,0), (1,1), (0,1)\}$ and
	\[\Lambda_p := \{1\} \times \{0,1\}^{p-1}
	\cup \bigcup_{j=2}^{p-1} \{0\}^{j-1} \times \{1\} \times \{0,1\}^{p-j}
	\cup \{0\}^{p-1} \times \{1\}, \quad p = 3, 4, \ldots.\]
	Then, we have that	
	\begin{equation}\label{eqn:approx_jnt_q_moments}
	\Big\| \prod_{i=1}^p \cO{X}{a_i}{t_i} - \prod_{i=1}^p \cO{Y}{a_i}{t_i} \Big\|_{\alpha}
	\leq \tilde K^{(\alpha)}_{p}.
	\end{equation}
\end{lemma}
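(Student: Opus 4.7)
The plan is to combine a telescoping identity with the generalized Hölder inequality and the triangle inequality in $L^{\alpha p}$. First, I would use the identity
\begin{equation*}
\prod_{i=1}^p \cO{X}{a_i}{t_i} - \prod_{i=1}^p \cO{Y}{a_i}{t_i} = \sum_{j=1}^{p}\Bigl(\prod_{i=1}^{j-1} \cO{X}{a_i}{t_i}\Bigr)\bigl(\cO{X}{a_j}{t_j} - \cO{Y}{a_j}{t_j}\bigr)\Bigl(\prod_{i=j+1}^{p} \cO{Y}{a_i}{t_i}\Bigr),
\end{equation*}
with the convention that empty products equal~$1$; this is a standard identity verified by induction on~$p$ (consecutive terms in the sum cancel in pairs). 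Note that the ``before $j$'' factors are $\cO{X}{a_i}{t_i}$ while the ``after $j$'' factors are $\cO{Y}{a_i}{t_i}$; this choice turns out to match precisely the structure of~$\Lambda_p$.

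Next, I would apply the triangle inequality in $L^\alpha$, followed by the generalized Hölder inequality with $p$ factors each in $L^{\alpha p}$ (so that $p\cdot(\alpha p)^{-1}=\alpha^{-1}$), yielding
\begin{equation*}
\Bigl\|\prod_{i=1}^p \cO{X}{a_i}{t_i} - \prod_{i=1}^p \cO{Y}{a_i}{t_i}\Bigr\|_\alpha \leq \sum_{j=1}^p \Bigl(\prod_{i<j}\|\cO{X}{a_i}{t_i}\|_{\alpha p}\Bigr)\bigl\|\cO{X}{a_j}{t_j}-\cO{Y}{a_j}{t_j}\bigr\|_{\alpha p}\Bigl(\prod_{i>j}\|\cO{Y}{a_i}{t_i}\|_{\alpha p}\Bigr).
\end{equation*}
By weak stationarity of $\seq{\vO{X}{t}}$ up to moment $\alpha p$, I replace $\|\cO{X}{a_i}{t_i}\|_{\alpha p}$ by $\|\cO{X}{a_i}{0}\|_{\alpha p}$, and by Assumption~\ref{as:m_dep_appr} together with definition~\eqref{A:mdep} I bound $\|\cO{X}{a_j}{t_j}-\cO{Y}{a_j}{t_j}\|_{\alpha p}\leq \tilde D_{a_j}^{(\alpha p)}$. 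For the $\cO{Y}{a_i}{t_i}$ terms (no stationarity assumed on $\seq{\vO{Y}{t}}$), I would apply the triangle inequality once more,
\begin{equation*}
\|\cO{Y}{a_i}{t_i}\|_{\alpha p} \leq \|\cO{X}{a_i}{t_i}\|_{\alpha p} + \|\cO{X}{a_i}{t_i}-\cO{Y}{a_i}{t_i}\|_{\alpha p} \leq \|\cO{X}{a_i}{0}\|_{\alpha p} + \tilde D_{a_i}^{(\alpha p)},
\end{equation*}
and then expand $\prod_{i>j}(\|\cO{X}{a_i}{0}\|_{\alpha p} + \tilde D_{a_i}^{(\alpha p)})$ as a sum over $(\ell_{j+1},\dots,\ell_p)\in\{0,1\}^{p-j}$.

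Putting everything together, each resulting summand has the form $\prod_{i=1}^p (\tilde D_{a_i}^{(\alpha p)})^{\ell_i}\|\cO{X}{a_i}{0}\|_{\alpha p}^{1-\ell_i}$, where $\ell_i=0$ for $i<j$, $\ell_j=1$, and $\ell_i\in\{0,1\}$ for $i>j$. The outer sum over $j\in\{1,\dots,p\}$ and the inner sum over $\{0,1\}^{p-j}$ then enumerate exactly the decomposition of $\Lambda_p$ by the position of the leftmost coordinate equal to~$1$, so the right-hand side equals $\tilde K_p^{(\alpha)}$, which establishes~\eqref{eqn:approx_jnt_q_moments}. The only real obstacle is combinatorial bookkeeping: one must verify that partitioning $\Lambda_p$ by the leftmost~$1$ matches what the telescoping produces, and take care with the boundary cases $p=1$ (trivial) and $j\in\{1,p\}$ (empty products).
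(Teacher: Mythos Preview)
Your proof is correct and follows essentially the same approach as the paper: the paper proves a slightly more general version (for arbitrary random variables $X_1,\ldots,X_p,Y_1,\ldots,Y_p$) using exactly your telescoping identity, the triangle inequality, generalized H\"{o}lder, and the expansion of $\prod_{i>j}(\|X_i\|_{\alpha p}+\|X_i-Y_i\|_{\alpha p})$, and then specialises to $X_i=\cO{X}{a_i}{t_i}$, $Y_i=\cO{Y}{a_i}{t_i}$. Your identification of $\Lambda_p$ as the decomposition by the position of the leftmost~$1$ is precisely what the paper's computation yields.
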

The proof of Lemma~\ref{lem:approx_jnt_moments} is available in Section \ref{app:proofs:jnt_moments}.
The case where $p := 2$, $a_1 := a$, and $a_2 := b$ with 
\[\tilde K_{2}^{(\alpha)} := \tilde K_{2}^{(\alpha)}(\{a,b\}) := \tilde D_{a}^{(2 \alpha)} \| \cO{X}{b}{0} \|_{2\alpha} + \tilde D_{a}^{(2 \alpha)} \tilde D_{b}^{(2 \alpha)} + \tilde D_{b}^{(2 \alpha)} \| \cO{X}{a}{0} \|_{2\alpha}, \]
appears repeatedly throughout the proofs, lemmas that follow, and also in the main theorems.

\begin{remark}\label{rem:Kp}
	(i) The quantity $\tilde K^{(\alpha)}_{p}$ in \eqref{Kqm} depends on the set of indices, but it does not depend on their order.
	This can be seen from the fact that the set $\Lambda_p$ is closed under permutations of the indices, which is the case as $\Lambda_p$ is closed under transpositions (swapping any two indices yields another element of $\Lambda_p$). Thus, the bound has the desirable property to remain the same regardless of the order of the indices~$a_i$ on the left-hand side of~\eqref{eqn:approx_jnt_q_moments}.\\
	(ii) In applications, $\tilde D_{a}^{(\alpha p)} $ will typically be of order $o(1)$ and $\| \cO{X}{a}{0} \|_{\alpha p} = \mathcal{O}(1)$, as \hbox{$m \rightarrow \infty$}. Therefore, $\tilde K^{(\alpha)}_{p,m} = \mathcal{O}( \max_{a \in \{a_1, \ldots, a_q\}} \tilde D_{a}^{(\alpha p)} )$ which can be seen from the fact that each element of $\Lambda_p$ has at least one index 1.\\
	(iii) To obtain a bound on the closeness of joint moments of $\seq{\vO{X}{t}}$ to the joint moments of the $m$-dependent sequence $\seq{\vO{Y}{t}}$ we may employ Lemma~\ref{lem:approx_jnt_moments} with $\alpha = 1$
	\[\Bigg| \E\Big[\prod_{i=1}^p \cO{X}{a_i}{t_i} \Big] - \E\Big[ \prod_{i=1}^p \cO{Y}{a_i}{t_i}\Big] \Bigg| \leq \Big\| \prod_{i=1}^p \cO{X}{a_i}{t_i} - \prod_{i=1}^p \cO{Y}{a_i}{t_i} \Big\|_{1} \leq \tilde K_{p}^{(1)}. \]
\end{remark}

The second term of~\eqref{final_upper_bound} depends on $\seq{\vO{Y}{t}}$ via $\tilde{\Sigma}_{ab}(k)$. 
Lemma~\ref{lem:bnd_var_mdep_est}, below, quantifies the error made by replacing $\tilde{\Sigma}_{ab}(k)$, defined in terms of $\seq{\vO{Y}{t}}$, by $n \Var\big(\cPest{\gamma}{a b}{k}\big)$ defined in terms of $\seq{\vO{X}{t}}$.
The proof of Lemma~\ref{lem:bnd_var_mdep_est} is available in Section \ref{app:proofs:bnd_var_mdep_est}.
\begin{lemma}\label{lem:bnd_var_mdep_est}
	Grant Assumption~\ref{as:m_dep_appr} with $q=4$, then
	\begin{equation}
	\nonumber |\tilde{\Sigma}_{ab}(k) - \cP{\Sigma}{ab}{k}|
	\leq \Big| n \Var\big(\cPest{\gamma}{a b}{k}\big) - \cP{\Sigma}{a b}{k} \Big|
	+ 2 \frac{(n-k)^2}{n} \Big(\max_{i=a,b} \tilde D_{i}^{(4)} \Big) \tilde F,
	\end{equation}
	where
	\begin{equation}
	\label{eq:Fm}
	\begin{split}
	& \tilde F :=  |\cP{\gamma}{a b}{k}| \big(\|\cO{X}{a}{0}\|_2 + \|\cO{X}{b}{0}\|_2 + \min_{i=a,b} \tilde D_{i}^{(2)} \big) \\
	&\;\; + \frac{1}{2} \Big( \max_{i=a,b} \tilde D_{i}^{(2)} \Big) \big(\|\cO{X}{a}{0}\|_2 + \|\cO{X}{b}{0}\|_2 + \min_{i=a,b} \tilde D_{i}^{(2)} \big)^2 \\
	&\;\; + (\|\cO{X}{a}{0}\|_4 + \tilde D_{a}^{(4)})(\|\cO{X}{b}{0}\|_4 + \tilde D_{b}^{(4)})\\
	&\qquad\times\Big( \|\cO{X}{a}{0}\|_4 + \|\cO{X}{b}{0}\|_4 + \tilde D_{a}^{(4)} + \tilde D_{b}^{(4)} \Big). \\
	\end{split}
	\end{equation}
\end{lemma}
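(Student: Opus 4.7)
The plan is to peel off the first summand on the right-hand side by a triangle inequality and then expand the remaining variance-difference into pieces that can each be controlled via Lemma~\ref{lem:approx_jnt_moments}. Setting
\[
U := n^{-1/2}\sum_{t=1}^{n-k}\cO{X}{a}{t+k}\cO{X}{b}{t}, \qquad V := n^{-1/2}\sum_{t=1}^{n-k}\cO{Y}{a}{t+k}\cO{Y}{b}{t},
\]
so that $n\Var(\cPest{\gamma}{ab}{k}) = \Var(U)$ and $\tilde{\Sigma}_{ab}(k) = \Var(V)$, the triangle inequality gives
\[
|\tilde{\Sigma}_{ab}(k) - \cP{\Sigma}{ab}{k}| \leq \big|n\Var(\cPest{\gamma}{ab}{k}) - \cP{\Sigma}{ab}{k}\big| + |\Var(V) - \Var(U)|,
\]
so it remains to bound $|\Var(V) - \Var(U)|$ by $2(n-k)^2 n^{-1}(\max_{i=a,b}\tilde D_i^{(4)})\tilde F$. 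Writing $\Var(V) - \Var(U) = [\E(V^2) - \E(U^2)] - [(\E V)^2 - (\E U)^2]$, I would treat the second-moment piece and the mean piece separately; these will give the third and the first two summands of $\tilde F$, respectively.

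For the second-moment piece I would use the identity $\E(V^2) - \E(U^2) = \E[(V-U)(V+U)]$ together with Cauchy--Schwarz to obtain $|\E(V^2) - \E(U^2)| \leq \|V-U\|_2 (\|U\|_2 + \|V\|_2)$. Applying Lemma~\ref{lem:approx_jnt_moments} with $\alpha = 2$, $p = 2$ to each summand of $V - U$ together with the triangle inequality yields $\|V - U\|_2 \leq n^{-1/2}(n-k)\tilde K_2^{(2)}$, and the crude enlargement $\tilde K_2^{(2)} \leq (\max_i\tilde D_i^{(4)})(\|\cO{X}{a}{0}\|_4 + \|\cO{X}{b}{0}\|_4 + \min_i \tilde D_i^{(4)})$ pulls out the prefactor $\max_i\tilde D_i^{(4)}$. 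The factor $\|U\|_2 + \|V\|_2$ I would handle by Cauchy--Schwarz on each summand together with $\|\cO{Y}{a}{t}\|_4 \leq \|\cO{X}{a}{0}\|_4 + \tilde D_a^{(4)}$ (triangle plus Assumption~\ref{as:m_dep_appr}), producing the bound $2n^{-1/2}(n-k)(\|\cO{X}{a}{0}\|_4 + \tilde D_a^{(4)})(\|\cO{X}{b}{0}\|_4 + \tilde D_b^{(4)})$. Multiplying the two estimates delivers precisely the third summand of $\tilde F$, after the harmless further enlargement $\min_i \tilde D_i^{(4)} \leq \tilde D_a^{(4)} + \tilde D_b^{(4)}$.

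For the mean piece I would use $(\E U)^2 - (\E V)^2 = (\E U - \E V)(\E U + \E V)$. Stationarity of $\seq{\vO{X}{t}}$ gives $|\E U| = n^{-1/2}(n-k)|\cP{\gamma}{ab}{k}|$, and applying Lemma~\ref{lem:approx_jnt_moments} with $\alpha = 1$, $p = 2$ to each of the summands $\cP{\gamma}{ab}{k} - \E[\cO{Y}{a}{t+k}\cO{Y}{b}{t}]$ yields $|\E U - \E V| \leq n^{-1/2}(n-k)\tilde K_2^{(1)}$. Combined with $|\E U| + |\E V| \leq 2|\E U| + |\E U - \E V|$, the product $|\E U - \E V|\cdot|\E U + \E V|$ splits into two contributions: $2(n-k)^2 n^{-1} \cdot \tilde K_2^{(1)} |\cP{\gamma}{ab}{k}|$ and $(n-k)^2 n^{-1} (\tilde K_2^{(1)})^2$. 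Using $\tilde K_2^{(1)} \leq (\max_i\tilde D_i^{(2)})(\|\cO{X}{a}{0}\|_2 + \|\cO{X}{b}{0}\|_2 + \min_i\tilde D_i^{(2)})$ together with the monotonicity $\max_i\tilde D_i^{(2)} \leq \max_i\tilde D_i^{(4)}$ rewrites these two contributions as exactly $2(n-k)^2 n^{-1}(\max_i\tilde D_i^{(4)})$ times the first and the second summand of $\tilde F$; the factor $\tfrac{1}{2}$ in front of the second summand of $\tilde F$ cancels the $2$ in $2(n-k)^2/n$ against the power $(\tilde K_2^{(1)})^2$.

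The main obstacle is bookkeeping rather than conceptual difficulty. Each individual step is routine (triangle inequality, Cauchy--Schwarz, $L^p$-monotonicity in $p$, the inequality $\tilde D_i^{(2)} \leq \tilde D_i^{(4)}$, and Lemma~\ref{lem:approx_jnt_moments}), but arranging them so that the resulting upper bound matches exactly the algebraic form of $\tilde F$ with the declared prefactors requires careful tracking of how the various $\tilde D_i^{(q)}$ and $\|\cO{X}{i}{0}\|_q$ factors combine after the inequalities have been applied.
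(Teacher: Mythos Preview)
Your argument is correct and lands on exactly the same constants as the paper, but the route you take for the second-moment piece differs from the paper's. The paper expands $\Var(V)-\Var(U)$ termwise as a double sum over $t_1,t_2$ and bounds each summand $|\E[X_{a}X_{b}X_{a}X_{b}]-\E[Y_{a}Y_{b}Y_{a}Y_{b}]|$ by a direct application of Lemma~\ref{lem:approx_jnt_moments} with $\alpha=1$, $p=4$ (the quantity $\tilde K_4^{(1)}$). You instead work at the aggregate level via the identity $\E V^2-\E U^2=\E[(V-U)(V+U)]$, apply Cauchy--Schwarz, and then invoke Lemma~\ref{lem:approx_jnt_moments} with $\alpha=2$, $p=2$ on $\|V-U\|_2$ together with a crude $L^2$-bound on $\|U\|_2+\|V\|_2$. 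It is a pleasant coincidence that the two routes produce the identical third summand of~$\tilde F$. For the mean piece your decomposition $(\E U-\E V)(\E U+\E V)\le 2|\E U-\E V|\,|\E U|+|\E U-\E V|^2$ is essentially the same telescoping the paper uses (the paper does it termwise, you do it on the aggregated sums), and both yield the first two summands of~$\tilde F$. Either approach is perfectly adequate here; the paper's version has the mild advantage of giving a bound for general cross-covariances $\Cov(\hat\gamma_{a_1b_1}(k_1),\hat\gamma_{a_2b_2}(k_2))$ before specializing, whereas yours is a bit shorter for the variance case at hand.
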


To bound the fourth term in~\eqref{final_upper_bound}, which depends on $\seq{\vO{Y}{t}}$ via $\tilde Q_t$ and $\tilde{\Sigma}_{ab}(k)$, we will use Lemma~\ref{lem:bnd_var_mdep_est}, above, to replace $\tilde{\Sigma}_{ab}(k)$ by $\cP{\Sigma}{ab}{k}$ and Lemma~\ref{lem:bnd1_Smn}, below, to replace $\tilde Q_t$, defined in terms of $\seq{\vO{Y}{t}}$, by $Q_t$, defined in~\eqref{notation_for_fourth_term} in terms of $\seq{\vO{X}{t}}$.
We state Lemma~\ref{lem:bnd1_Smn} first and defer the detailed discussion regarding a bound for the fourth term in~\eqref{final_upper_bound} to the end of this section.
The proof of Lemma~\ref{lem:bnd1_Smn} is available in Section \ref{app:proofs:bnd1_Smn}.

\begin{lemma}\label{lem:bnd1_Smn}
	Grant Assumption~\ref{as:m_dep_appr} with $q=6$.
	Let $\tilde{Q}_t$ and $\tilde{Z}(t)$ be as in and below~\eqref{tildeS_mn}, respectively, and $Q_t$ and $Z(t)$ be as in and below~\eqref{notation_for_fourth_term}, respectively. Then,
	\begin{equation*}
	\begin{split}
	|Q_t - \tilde{Q}_t|
	\leq \tilde K_{2}^{(3)} \Big( |A_t| |B_t| + \frac{1}{2} |A_t|^2 \Big) \tilde C_{1}
	+ \tilde K_{2}^{(2)} | A_t | |B_t| \tilde C_{2}
	+ \tilde K_{2}^{(1)}  |B_t| \tilde C_{3},
	\end{split}
	\end{equation*}
	with $\tilde K_{2}^{(\alpha)} := \tilde K_{2}^{(\alpha)}(\{a,b\})$, $\alpha = 1,2,3$, as defined in Lemma~\ref{lem:approx_jnt_moments}, and
	\begin{equation*}
	\begin{split}
	\tilde C_{1} & := 6 \big( 2 \| \cO{X}{a}{0} \|_{6} \| \cO{X}{b}{0} \|_{6} + \tilde K_{2}^{(3)} \big)^2 + 2 (\tilde K_{2}^{(3)})^2, \\
	\tilde C_{2} & := 8 \Big( 2 \| \cO{X}{a}{0} \|_{4} \| \cO{X}{b}{0} \|_{4} + \tilde K_{2}^{(2)} \Big) \Big( \| \cO{X}{a}{0} \|_{2} \| \cO{X}{b}{0} \|_{2} + \tilde K_{2}^{(1)} \Big), \\
	\tilde C_{3} & := 2 \Big| \sum_{j \in A_t} \E( Z(t)  Z(j) ) \Big|.
	\end{split}
	\end{equation*}
\end{lemma}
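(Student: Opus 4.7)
The plan is to use the reverse triangle inequality $\bigl|\E|W_1| - \E|W_2|\bigr| \leq \E|W_1 - W_2|$ separately on the two terms that comprise $Q_t$ and $\tilde Q_t$, which reduces the problem to bounding $\E|U_t - \tilde U_t|$ and $\E|V_t - \tilde V_t|$, where $U_t := (Z(t) Z(A_t) - \E[Z(t) Z(A_t)]) Z(B_t)$ and $V_t := Z(t) (Z(A_t))^2$ (with $\tilde U_t, \tilde V_t$ built analogously from $\tilde Z$). The key auxiliary variable is $\eta_s := Z(s) - \tilde Z(s)$, which equals $\delta_s - \E[\delta_s]$ for $\delta_s := \cO{X}{a}{s+k}\cO{X}{b}{s} - \cO{Y}{a}{s+k}\cO{Y}{b}{s}$. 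Lemma~\ref{lem:approx_jnt_moments} applied with $p = 2$ gives $\|\delta_s\|_\alpha \leq \tilde K_2^{(\alpha)}$ and hence $\|\eta_s\|_\alpha \leq 2 \tilde K_2^{(\alpha)}$ for $\alpha \geq 1$; combined with $\|Z(s)\|_q \leq 2 \|\cO{X}{a}{0}\|_{2q}\|\cO{X}{b}{0}\|_{2q}$ and $\|\tilde Z(s)\|_q \leq \|Z(s)\|_q + \|\eta_s\|_q$, these furnish the $L^q$-inputs used throughout.

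Writing $P_t := Z(t) Z(A_t)$ and $\tilde P_t := \tilde Z(t)\tilde Z(A_t)$, I would decompose
\[
U_t - \tilde U_t = (P_t - \E P_t)(Z(B_t) - \tilde Z(B_t)) + \bigl((P_t - \tilde P_t) - \E (P_t - \tilde P_t)\bigr)\tilde Z(B_t),
\]
which is an algebraic identity verifiable by direct expansion. The first bracket splits further: the part $P_t (Z(B_t) - \tilde Z(B_t)) = \sum_{j_1 \in A_t,\, j_2 \in B_t} Z(t) Z(j_1)\eta_{j_2}$ is bounded termwise by H\"older in $L^3$ and feeds into the $\tilde K_2^{(3)} |A_t||B_t| \tilde C_1$ contribution, while the part $-\E P_t \cdot (Z(B_t) - \tilde Z(B_t))$ is controlled by $|\E P_t| \sum_{j \in B_t}\|\eta_j\|_1 \leq |B_t|\tilde K_2^{(1)} \cdot 2|\E P_t|$, which matches $\tilde K_2^{(1)} |B_t| \tilde C_3$ precisely because $\tilde C_3 = 2|\E P_t|$. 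For the second bracket I use the telescoping identity $Z(t) Z(j) - \tilde Z(t)\tilde Z(j) = \tilde Z(t)\eta_j + \eta_t \tilde Z(j) + \eta_t \eta_j$ on each summand of $P_t - \tilde P_t$: the uncentered part $\E|(P_t - \tilde P_t)\tilde Z(B_t)|$ is bounded in $L^3$ by H\"older (augmenting the $\tilde K_2^{(3)} |A_t||B_t| \tilde C_1$ term), whereas the centered piece $|\E(P_t - \tilde P_t)| \cdot \E|\tilde Z(B_t)|$ is bounded by Cauchy--Schwarz in $L^2$ on the first factor (introducing the $L^4$ norms that feature in $\tilde C_2$) combined with $\E|\tilde Z(B_t)| \leq 2|B_t|(\|\cO{X}{a}{0}\|_2\|\cO{X}{b}{0}\|_2 + \tilde K_2^{(1)})$, producing the $\tilde K_2^{(2)} |A_t||B_t| \tilde C_2$ contribution.

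The analysis of $V_t - \tilde V_t = \sum_{j_1, j_2 \in A_t}[Z(t) Z(j_1) Z(j_2) - \tilde Z(t) \tilde Z(j_1) \tilde Z(j_2)]$ is simpler since no centering corrections appear: the three-fold telescoping
\[
Z(t)Z(j_1)Z(j_2) - \tilde Z(t)\tilde Z(j_1)\tilde Z(j_2) = \eta_t Z(j_1)Z(j_2) + \tilde Z(t)\eta_{j_1}Z(j_2) + \tilde Z(t)\tilde Z(j_1)\eta_{j_2}
\]
applied termwise and bounded in $L^3$ via H\"older yields the final $\tfrac12 |A_t|^2 \tilde K_2^{(3)} \tilde C_1$ contribution, where the $\tfrac12$ is inherited from the prefactor in the definition of $Q_t$. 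Summing the four pieces gives the claimed bound. The main obstacle I anticipate is constants bookkeeping: the decomposition above must be selected precisely so that the $|B_t|\tilde C_3$ factor isolates cleanly from the $|A_t||B_t|$ contributions, and the specific constants $6$ and $2$ in $\tilde C_1$, together with the $8$ in $\tilde C_2$, emerge only when the Cauchy--Schwarz and H\"older estimates are combined without unnecessary slack in the norm bounds for $\tilde Z$ and $\eta$.
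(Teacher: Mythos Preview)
Your strategy---reverse triangle inequality, algebraic decomposition, then termwise H\"older/Cauchy--Schwarz---is exactly the paper's. The pieces you call $-\E P_t\,\eta(B_t)$ and $-\E(P_t-\tilde P_t)\,\tilde Z(B_t)$ correspond precisely to the paper's terms that produce $\tilde K_2^{(1)}|B_t|\tilde C_3$ and $\tilde K_2^{(2)}|A_t||B_t|\tilde C_2$, and your treatment of $V_t-\tilde V_t$ matches as well.

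The one place your organization differs is in the random (uncentered) part of $U_t-\tilde U_t$. The paper does \emph{not} split it into $P_t\,\eta(B_t)$ and $(P_t-\tilde P_t)\tilde Z(B_t)$; instead it keeps $P_t Z(B_t)-\tilde P_t\tilde Z(B_t)$ together and bounds it as $\sum_{j_1\in A_t}\sum_{j_2\in B_t}\E|Z_t Z_{j_1}Z_{j_2}-\tilde Z_t\tilde Z_{j_1}\tilde Z_{j_2}|$, applying Lemma~\ref{lem:approx_jnt_moments_nonstat} with $p=3$ directly to the $Z$'s. That is what yields exactly $\tilde K_2^{(3)}\tilde C_1$ per summand. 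If you bound your two pieces separately using the three-term identity $ZZ'-\tilde Z\tilde Z'=\tilde Z\eta'+\eta\tilde Z'+\eta\eta'$ you wrote (together with $\|\tilde Z\|_3\le\|Z\|_3+\|\eta\|_3$), you pick up slack: writing $a:=2\|X_a(0)\|_6\|X_b(0)\|_6$ and $k:=\tilde K_2^{(3)}$, your per-summand bound becomes $6a^2k+20ak^2+24k^3$ rather than the target $k\tilde C_1=6a^2k+12ak^2+8k^3$. To recover $\tilde C_1$ exactly, either recombine your two pieces into the single triple-product difference before bounding, or use the two-term telescoping $ZZ'-\tilde Z\tilde Z'=\eta Z'+\tilde Z\eta'$ so that together with $Z_tZ_{j_1}\eta_{j_2}$ you obtain precisely the three-fold telescoping of the triple product.
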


\begin{remark}
	Both $|Q_t - \tilde{Q}_t|$ and the bound in Lemma~\ref{lem:bnd1_Smn} depend on $m$. The bound has three addends, each of which is factored into three terms according to their behavior with respect to $m$: \\
	(i) $\tilde K_{2}^{(\alpha)}$, which will decrease as $m$ increases, in typical applications (cf.\ Remark~\ref{rem:Kp}(ii));\\
	(ii) polynomials in the variables $|A_t|$ and $|B_t|$, which increase as $m$ increases; \\
	(iii) $\tilde C_1$, $\tilde C_2$, and $\tilde C_3$, which are non-negative and typically bounded: $\tilde C_{1}$ and $\tilde C_{2}$ are bounded if $|\tilde K_{2}^{(\alpha)}|$ is bounded, and $\tilde C_{3}$ if cumulants up to order 4 of $\seq{\vO{X}{t}}$ are sumable; cf.\ discussion of the third term in \eqref{bnd_parts}.\\
	In applications, where $\tilde K_{2}^{(\alpha)} = o(1/m^2)$, the bound will vanish, uniformly with respect to $t$.
\end{remark}

Using Lemmas~\ref{lem:bnd_var_mdep_est} and~\ref{lem:bnd1_Smn}, we now bound $2 (n\tilde{\Sigma}_{ab}(k))^{-3/2} \sum_{t=1}^{n-k} \tilde Q_t$ in~\eqref{final_upper_bound} by a multiple of $2 (n\cO{\Sigma}{ab}{k})^{-3/2} \sum_{t=1}^{n-k} Q_t,$ where the last expression relies only on $\seq{\vO{X}{t}}$. We have
\begin{align}
\label{eqn:outl_bnd_tildeSmn_1}
 \frac{2 n^{-3/2}}{\left(\tilde{\Sigma}_{ab}(k)\right)^{3/2}} \sum_{t=1}^{n-k} \tilde{Q}_t
& \leq \frac{2 n^{-3/2}}{\left(\tilde{\Sigma}_{ab}(k)\right)^{3/2}} \sum_{t=1}^{n-k} Q_t + \frac{2 n^{-3/2}}{\left(\tilde{\Sigma}_{ab}(k)\right)^{3/2}} \sum_{t=1}^{n-k} |Q_t - \tilde{Q}_t|\\
\nonumber & = \left(\frac{2 n^{-3/2}}{\left(\cO{\Sigma}{ab}{k}\right)^{3/2}} \sum_{t=1}^{n-k} Q_t + \frac{2 n^{-3/2}}{\left(\cO{\Sigma}{ab}{k}\right)^{3/2}}\sum_{t=1}^{n-k}|Q_t - \tilde{Q}_t|\right) \left(\frac{\cO{\Sigma}{ab}{k}}{\tilde{\Sigma}_{ab}(k)}\right)^{3/2}.
\end{align}
If for a specific example $\tilde{\Sigma}_{ab}(k)$ is tractable, but $\tilde Q_t$ is not, then it suffices to employ Lemma~\ref{lem:bnd1_Smn} and the bound in \eqref{eqn:outl_bnd_tildeSmn_1} is usable.
In the case where $\tilde{\Sigma}_{ab}(k)$ is not tractable, but $\cO{\Sigma}{ab}{k}$ is, we additionally employ Lemma~\ref{lem:bnd_var_mdep_est} to find $n$ and $m$ large enough such that $|\tilde{\Sigma}_{ab}(k) - \cP{\Sigma}{ab}{k}| \leq C \cdot \cP{\Sigma}{ab}{k}$, for some fixed $C \in (0,1)$, as this implies that $\big(\Sigma_{ab}(k) / \tilde{\Sigma}_{ab}(k) \big)^{3/2} \leq 1/(1-C)^{3/2}$. Finally, note that we can bound $Q_t$ using either of the two strategies described in Section~\ref{sec:bound_mdep_appr_known} to bound $\tilde Q_t$ with a computable quantity. The only adaptation required to employ these methods is to replace $\seq{\vO{Y}{t}}$ in the computations by $\seq{\vO{X}{t}}$.

\section{The bound for non-centered stationary data}\label{app:non_central}
The bound in Theorem~\ref{general_theorem} is for $\cPest{\gamma}{ab}{k}$, defined in~\eqref{def:gam_hat}, for data assumed to be centered.
In practice, though, we often see $\cPpre{\gamma}{ab}{k}$, defined in~\eqref{def:gam_tilde}, with the empirical centering added for the case when the data are not centered. In this section, we rigorously describe the effect on the autocovariance and cross-covariance finite sample distribution when the empirical mean, instead of the population mean, is used for centering, which is often done in practice. Lemma~\ref{lem:diff_pre_est} below, provides a bound on the Wasserstein distance between the distributions of $\cPpre{\gamma}{ab}{k}$ and $\cPest{\gamma}{ab}{k}$, which are the autocovariance and cross-covariance functions with and without empirical centering, respectively. The lemma can be employed to get an upper bound on $d_{{\mathrm{W}}}\left( \mathcal{L}\left( \sqrt{n}\left(\cPpre{\gamma}{ab}{k} - \cP{\gamma}{ab}{k}\right) \right), \mathcal{N}\left(0,\cP{\Sigma}{ab}{k}\right) \right)$, where the data are non-centered; this is explained in Remark~\ref{rem:noncentreddata}(i).
\begin{lemma}\label{lem:diff_pre_est}
	Assume that $\seq{\vO{X}{t}}$ is a weakly stationary sequence with $\E \vO{X}{t} = 0$ and $\sum_{u=-\infty}^{\infty} |u| |\cP{\gamma}{jj}{u}| < \infty$ for $j=a,b$. Then,
	\begin{align*}
	& d_{\rm W}\big( \mathcal{L}\left( n^{1/2} (\cPpre{\gamma}{ab}{k} - \cP{\gamma}{ab}{k})\right), \mathcal{L}\left( n^{1/2} (\cPest{\gamma}{ab}{k} - \cP{\gamma}{ab}{k} ) \right) \big)\\
	&\leq n^{-1/2} \Big( \sum_{u=-\infty}^{\infty} |\cP{\gamma}{aa}{u}| \Big)^{1/2} \Big( \sum_{u=-\infty}^{\infty} |\cP{\gamma}{bb}{u}| \Big)^{1/2} \\
	&\;\;	+ \frac{|k|}{n^{3/2}} \Bigg( \sum_{u=-\infty}^{\infty} |\cP{\gamma}{aa}{u}| + \frac{1}{n} \sum_{u=-\infty}^{\infty} |u| |\cP{\gamma}{aa}{u}| \Bigg)^{1/2}\\
	& \qquad\qquad \times \Bigg( \sum_{u=-\infty}^{\infty} |\cP{\gamma}{bb}{u}| + \frac{1}{n} \sum_{u=-\infty}^{\infty} |u| |\cP{\gamma}{bb}{u}| \Bigg)^{1/2}
	\end{align*}
\end{lemma}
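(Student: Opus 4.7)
The plan is to use the coupling bound $d_{\mathrm W}(\mathcal L(U),\mathcal L(V)) \le \E|U-V|$, applied with both $\cPpre{\gamma}{ab}{k}$ and $\cPest{\gamma}{ab}{k}$ evaluated on the same sample, which reduces the claim to an upper bound for $\sqrt n\,\E\bigl|\cPpre{\gamma}{ab}{k} - \cPest{\gamma}{ab}{k}\bigr|$. Without loss of generality assume $k\ge 0$; the case $k < 0$ follows from the relation $\cPest{\gamma}{ab}{-k} = \cPest{\gamma}{ba}{k}$ (and analogously for $\cPpre{\gamma}{}{}$) together with the symmetry of the stated bound. Expanding the centred products in \eqref{def:gam_tilde} and collecting terms then gives the algebraic identity
\[
\cPpre{\gamma}{ab}{k} - \cPest{\gamma}{ab}{k} \;=\; -\,T_aT_b \;+\; \Delta_a\Delta_b \;-\; \tfrac{k}{n}\bar X_a\bar X_b,
\]
where $T_a := n^{-1}\sum_{t=k+1}^{n}\cO{X}{a}{t}$, $T_b := n^{-1}\sum_{t=1}^{n-k}\cO{X}{b}{t}$, $\Delta_a := \bar X_a - T_a = n^{-1}\sum_{t=1}^{k}\cO{X}{a}{t}$, and $\Delta_b := \bar X_b - T_b = n^{-1}\sum_{t=n-k+1}^{n}\cO{X}{b}{t}$.

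By the triangle inequality followed by Cauchy--Schwarz applied separately to each of the three terms, it suffices to bound $\E T_a^2$, $\E\Delta_a^2$ and $\E\bar X_a^2$ (and analogously for $b$). Under weak stationarity each is of the form $n^{-2}\sum_{|u|\le N-1}(N-|u|)\cP{\gamma}{aa}{u}$ with $N = n-k$, $N = k$ and $N = n$, respectively. For $\E T_a^2$ and $\E\Delta_a^2$ I would use the tight bound $(N-|u|) \le N$, giving $\E T_a^2 \le \tfrac{n-k}{n^2}\sum_u|\cP{\gamma}{aa}{u}|$ and $\E\Delta_a^2 \le \tfrac{k}{n^2}\sum_u|\cP{\gamma}{aa}{u}|$. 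For $\E\bar X_a^2$, however, I would deliberately use the slightly \emph{looser} estimate $(n-|u|) \le (n+|u|)$, which yields
\[
\E\bar X_a^2 \;\le\; \tfrac{1}{n^2}\sum_u(n+|u|)|\cP{\gamma}{aa}{u}| \;=\; \tfrac{1}{n}\Bigl(\sum_u|\cP{\gamma}{aa}{u}| + \tfrac{1}{n}\sum_u|u|\,|\cP{\gamma}{aa}{u}|\Bigr).
\]

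Multiplying the three Cauchy--Schwarz bounds by $\sqrt n$ yields, respectively, $\tfrac{n-k}{n^{3/2}}\bigl(\sum|\cP{\gamma}{aa}{u}|\sum|\cP{\gamma}{bb}{u}|\bigr)^{1/2}$, $\tfrac{k}{n^{3/2}}\bigl(\sum|\cP{\gamma}{aa}{u}|\sum|\cP{\gamma}{bb}{u}|\bigr)^{1/2}$, and $\tfrac{k}{n^{3/2}}\bigl[\bigl(\sum|\cP{\gamma}{aa}{u}|+n^{-1}\sum|u||\cP{\gamma}{aa}{u}|\bigr)\bigl(\sum|\cP{\gamma}{bb}{u}|+n^{-1}\sum|u||\cP{\gamma}{bb}{u}|\bigr)\bigr]^{1/2}$. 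The arithmetic identity $(n-k)+k = n$ collapses the first two contributions into $n^{-1/2}\bigl(\sum|\cP{\gamma}{aa}{u}|\sum|\cP{\gamma}{bb}{u}|\bigr)^{1/2}$, which is exactly the first summand of the claim, while the third is precisely the second summand.

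The main, relatively modest, obstacle is to anticipate the correct pairing of contributions: the tight variance bounds for $T_aT_b$ and $\Delta_a\Delta_b$ together with the cancellation $(n-k)+k = n$ are what produce the $k$-uniform rate $n^{-1/2}$ in the leading summand, whereas the looser estimate $(n-|u|)\le(n+|u|)$ is applied only to $\bar X_a$ and $\bar X_b$ so that the $\tfrac{k}{n}\bar X_a\bar X_b$ contribution takes the stated form involving $n^{-1}\sum_u|u||\cP{\gamma}{jj}{u}|$. Once the identity for $\cPpre{\gamma}{ab}{k} - \cPest{\gamma}{ab}{k}$ is at hand, the rest is a routine computation.
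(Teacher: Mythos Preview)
Your proof is correct, but the decomposition differs from the paper's. The paper writes
\[
\cPpre{\gamma}{ab}{k}-\cPest{\gamma}{ab}{k}
=\frac{n-k}{n}\Bigl[(\bar X_{a,1:n}-\bar X_{a,k+1:n})(\bar X_{b,1:n}-\bar X_{b,1:n-k})-\bar X_{a,k+1:n}\,\bar X_{b,1:n-k}\Bigr],
\]
so it has only two pieces: the product $\bar X_{a,k+1:n}\,\bar X_{b,1:n-k}$ (which, after Cauchy--Schwarz and the variance bound for block means of length $n-k$, gives the $n^{-1/2}$ summand directly) and a product of \emph{differences} of means. To control the latter the paper expands $\|\bar X_{a,1:n}-\bar X_{a,k+1:n}\|_2^2$ into the two block variances plus a cross-covariance $\Cov(\bar X_{a,1:k},\bar X_{a,k+1:n})$, and it is this cross term that forces the factor $\sum_u|u|\,|\cP{\gamma}{jj}{u}|$ into the bound.

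Your three-term identity $-T_aT_b+\Delta_a\Delta_b-\tfrac{k}{n}\bar X_a\bar X_b$ is a genuinely different and arguably cleaner route: every factor is a single block mean, so only plain variance bounds are needed, and the combination $(n-k)+k=n$ recovers the first summand without any cross-covariance estimate. In fact your argument shows slightly more than stated: had you used the tight bound $(n-|u|)\le n$ for $\E\bar X_j^2$ as well, the third contribution would simplify to $\tfrac{k}{n^{3/2}}\bigl(\sum_u|\cP{\gamma}{aa}{u}|\sum_u|\cP{\gamma}{bb}{u}|\bigr)^{1/2}$, which is smaller than the lemma's second summand. The deliberately loose step $(n-|u|)\le(n+|u|)$ is therefore only needed to reproduce the exact form of the statement, whereas in the paper's argument the $\sum_u|u|\,|\cP{\gamma}{jj}{u}|$ term is not avoidable.
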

\begin{proof}
	Let $h \in \mathcal{H}$, as defined in~\eqref{classes_functions}. Since $\|h\|_{{\mathrm{Lip}}} \leq 1$, we have
	\begin{align}
	\nonumber & \left|h\left( n^{1/2} (\cPpre{\gamma}{ab}{k} - \cP{\gamma}{ab}{k})\right) - h\left(n^{1/2} (\cPest{\gamma}{ab}{k} - \cP{\gamma}{ab}{k})\right)\right| \leq  n^{1/2} |\cPpre{\gamma}{ab}{k} - \cPest{\gamma}{ab}{k} |\\
	\nonumber & = \frac{n-k}{n^{1/2}} \Big| \bar X_{a,1:n} \bar X_{b,1:n} - \bar X_{a,1:n} \bar X_{b,1:n-k} - \bar X_{a,k+1:n} \bar X_{b,1:n} \Big| \\
	\nonumber & = \frac{n-k}{n^{1/2}} \Big| (\bar X_{a,1:n} - \bar X_{a,k+1:n})(\bar X_{b,1:n} - \bar X_{b,1:n-k}) - \bar X_{a,k+1:n} \bar X_{b,1:n-k} \Big|.
	\end{align}
	where $\bar X_{a,u:v} := \frac{1}{v-u+1} \sum_{t=u}^v \cO{X}{a}{t}$ and analogously with $b$ instead of $a$.
	From the previous equation, we obtain, for $k = 1, \ldots, n-1$, that
	\begin{equation*}
	\begin{split}
	& d_{\rm W}\big( \mathcal{L}\left( n^{1/2} (\cPpre{\gamma}{ab}{k} - \cP{\gamma}{ab}{k}) \right), \mathcal{L}\left( n^{1/2} (\cPest{\gamma}{ab}{k} - \cP{\gamma}{ab}{k} ) \right) \big) \\
	& \leq \frac{n-k}{n^{1/2}}\Big( \| \bar X_{a,1:n} - \bar X_{a,k+1:n} \|_2 \| \bar X_{b,1:n} - \bar X_{b,1:n-k} \|_2 + \|\bar X_{a,k+1:n}\|_2 \|\bar X_{b,1:n-k} \|_2\Big) \\
	& \leq \frac{n-k}{n^{1/2}} \Big(  \frac{k}{n} \Big)^2 \Bigg( \frac{n}{k(n-k)} \sum_{u=-\infty}^{\infty} |\cP{\gamma}{aa}{u}| + \frac{1}{k (n-k)} \sum_{u=-\infty}^{\infty} |u| |\cP{\gamma}{aa}{h}| \Bigg)^{1/2} \\
	& \qquad\qquad\qquad \times \Bigg( \frac{n}{k(n-k)} \sum_{u=-\infty}^{\infty} |\cP{\gamma}{bb}{u}| + \frac{1}{k (n-k)} \sum_{u=-\infty}^{\infty} |u| |\cP{\gamma}{bb}{u}| \Bigg)^{1/2} \\
	& \qquad + \frac{n-k}{n^{1/2}} \frac{1}{n-k} \Big( \sum_{u=-\infty}^{\infty} |\cP{\gamma}{aa}{u}| \Big)^{1/2} \Big( \sum_{u=-\infty}^{\infty} |\cP{\gamma}{bb}{u}| \Big)^{1/2}
	\end{split}
	\end{equation*}
	\begin{equation*}
	\begin{split}
	& = \frac{k}{n^{3/2}} \Bigg( \sum_{u=-\infty}^{\infty} |\cP{\gamma}{aa}{u}| + \frac{1}{n} \sum_{u=-\infty}^{\infty} |u| |\cP{\gamma}{aa}{u}| \Bigg)^{1/2}\\
	& \qquad\qquad \times \Bigg( \sum_{u=-\infty}^{\infty} |\cP{\gamma}{bb}{u}| + \frac{1}{n} \sum_{u=-\infty}^{\infty} |u| |\cP{\gamma}{bb}{u}| \Bigg)^{1/2} \\
	& \qquad + \frac{1}{n^{1/2}}\Big( \sum_{u=-\infty}^{\infty} |\cP{\gamma}{aa}{u}| \Big)^{1/2} \Big( \sum_{u=-\infty}^{\infty} |\cP{\gamma}{bb}{u}| \Big)^{1/2} \\
	\end{split}
	\end{equation*}
	where we have used that
	\begin{equation*}
	\begin{split}
	\E \bar X_{j,u:v}^2
	& = \frac{1}{(v-u+1)^2} \sum_{s=u}^v \sum_{t=u}^v \E( \cO{X}{j}{s-t} \cO{X}{j}{0}) \\
	& = \frac{1}{(v-u+1)^2} \sum_{\ell=-(v-u+1)}^{v-u+1} (v - u + 1 - |\ell|) \cP{\gamma}{jj}{\ell} \\
	& = \frac{1}{v-u+1} \sum_{\ell=-(v-u+1)}^{v-u+1} \Big(1 - \frac{|\ell|}{v-u+1}\Big) \cP{\gamma}{jj}{\ell}
	\\
	& \leq \frac{1}{v - u + 1} \sum_{\ell=-\infty}^{\infty} |\cP{\gamma}{jj}{\ell}|
	\end{split}
	\end{equation*}
	and the fact that for $k = 1, \ldots, n-1$,
	\begin{equation*}
	\begin{split}
	\| \bar X_{a,1:n} - \bar X_{a,k+1:n} \|_2^2
	& = \E \Big( \frac{1}{n} \sum_{i=1}^n \cO{X}{a}{i} - \frac{1}{n-k} \sum_{i=k+1}^n \cO{X}{a}{i} \Big)^2 \\
	& = \E \Big( \frac{k}{n} \frac{1}{n-k} \sum_{i=k+1}^n \cO{X}{a}{i} - \frac{k}{n} \frac{1}{k} \sum_{i=1}^k \cO{X}{a}{i} \Big)^2 \\
	& = \Big( \frac{k}{n} \Big)^2 \Big( \| \bar X_{a,k+1:n} \|_2^2 + \| \bar X_{a,1:k} \|_2^2 - 2 \Cov(\bar X_{a,k+1:n}, \bar X_{a,1:k}) \Big)\\
	& \leq \Big( \frac{k}{n} \Big)^2 \Bigg( \Big( \frac{1}{n-k} + \frac{1}{k} \Big) \Big( \sum_{u=-\infty}^{\infty} |\cP{\gamma}{aa}{u}| \Big) + 2 \frac{1}{k (n-k)} \sum_{u=1}^{\infty} u |\cP{\gamma}{aa}{u}| \Bigg) \\
	& \leq \Big( \frac{k}{n} \Big)^2 \Bigg( \frac{n}{k(n-k)} \sum_{u=-\infty}^{\infty} |\cP{\gamma}{aa}{u}| + \frac{1}{k (n-k)} \sum_{u=-\infty}^{\infty} |u| |\cP{\gamma}{aa}{u}| \Bigg).
	\end{split}
	\end{equation*}
	Similarly, we derive
	\begin{equation*}
	\begin{split}
	\| \bar X_{b,1:n} - \bar X_{b,1:n-k} \|_2^2
	& = \Big( \frac{k}{n} \Big)^2 \| \bar X_{b,n-k+1:n} - \bar X_{b,1:n-k} \|_2^2 \\
	& \leq \Big( \frac{k}{n} \Big)^2 \Bigg( \frac{n}{k(n-k)} \sum_{u=-\infty}^{\infty} |\cP{\gamma}{bb}{u}| + \frac{1}{k (n-k)} \sum_{u=-\infty}^{\infty} |u| |\cP{\gamma}{bb}{u}| \Bigg).
	\end{split}
	\end{equation*}
\end{proof}
\begin{remark}\label{rem:noncentreddata} (i) Lemma~\ref{lem:diff_pre_est} facilitates use of Theorem~\ref{general_theorem} in the context of non-centered stationary time series, say $\vO{N}{t}$. This can be seen as follows. Define $\vO{X}{t} := \vO{N}{t} - \E(\vO{N}{t})$ and note that $\cO{X}{j}{t} - \bar X_j = \cO{N}{j}{t} - \bar N_j$. Thus,
	\begin{equation*}
	\begin{split}
	\cPpre{c}{ab}{k} :=
	\frac{1}{n} \sum\limits_{t=1}^{n-k} (\cO{N}{a}{t+k} - \bar N_a) (\cO{N}{b}{t} - \bar N_b)
	&= \cPpre{\gamma}{ab}{k} =
	\frac{1}{n} \sum\limits_{t=1}^{n-k} (\cO{X}{a}{t+k} - \bar X_a) (\cO{X}{b}{t} - \bar X_b),
	\end{split}
	\end{equation*}
	where $\cPpre{\gamma}{ab}{k}$ is defined in terms of the centered process $\vO{X}{t}$ to which we can apply our results. An upper bound on $d_{{\mathrm{W}}}\left(\mathcal{L}\left(\sqrt{n}\left(\cPpre{c}{ab}{k} - \cP{\gamma}{ab}{k}\right)\right),\mathcal{N}\left(0,\cP{\Sigma}{ab}{k}\right)\right)$, is thus obtain through the triangle inequality
	\begin{align}
	\nonumber & d_{{\mathrm{W}}}\left( \mathcal{L}\left( \sqrt{n}\left(\cPpre{c}{ab}{k} - \cP{\gamma}{ab}{k}\right)\right) ,\mathcal{N}\left(0,\cP{\Sigma}{ab}{k}\right)\right) \\
	& \leq d_{\rm W}\left( \mathcal{L}\left( \sqrt{n}(\cPpre{\gamma}{ab}{k} - \cP{\gamma}{ab}{k}) \right), \mathcal{L}\left(  \sqrt{n} (\cPest{\gamma}{ab}{k} - \cP{\gamma}{ab}{k} ) \right) \right) \label{triangle_middlestep1} \\
	 & \quad + d_{{\mathrm{W}}}\left( \mathcal{L}\left(\sqrt{n}\left(\cPest{\gamma}{ab}{k} - \cP{\gamma}{ab}{k}\right) \right),\mathcal{N}\left(0,\cP{\Sigma}{ab}{k}\right)\right).\label{triangle_middlestep2}
	\end{align}
	The results of Lemma~\ref{lem:diff_pre_est} and Theorem~\ref{general_theorem} are applied to the terms in~\eqref{triangle_middlestep1} and~\eqref{triangle_middlestep2}, respectively, to get the desired bound.\\
	(ii) Lemma~\ref{lem:diff_pre_est} asserts a bound of the order $\mathcal{O}(n^{-1/2})$, uniformly with respect to $k$.\\
	(iii) Note the following, interesting, special cases: (a) if $k = 0$, the bound reduces to
	\[n^{-1/2} \Big( \sum_{u=-\infty}^{\infty} |\cP{\gamma}{aa}{u}|\sum_{u=-\infty}^{\infty} |\cP{\gamma}{bb}{u}|\Big)^{1/2}.\]
	(b) if the data are componentwise uncorrelated (i.\,e., $\cP{\gamma}{aa}{u} = \cP{\gamma}{bb}{u} = 0$ for $u \neq 0$), then the bound reduces to
	\[\Big( n^{-1/2} + k\,n^{-3/2} \Big) \Big( \cP{\gamma}{aa}{0} \cP{\gamma}{bb}{0} \Big)^{1/2}.\]
\end{remark}

\section{Details on the computation of the bound for causal AR(1)}\label{app:tech_details:AR1_expl}
Here, we provide the technical details on how to obtain the bound in the AR(1) example which has been discussed in Section~\ref{sec:expl:AR1}. The bound in Theorem~\ref{general_theorem} is \[\frac{k}{\sqrt{n}} |\cO{\gamma}{ab}{k}| + \frac{\sqrt{2}}{\sqrt{\pi\cO{\Sigma}{ab}{k}}}\left|\cO{\Sigma}{ab}{k} - \cO{\tilde{\Sigma}}{ab}{k}\right| + \frac{2(n-k)}{\sqrt{n}} \tilde K + \frac{2 n^{-3/2}}{\left(\cO{\tilde{\Sigma}}{ab}{k}\right)^{3/2}} \sum_{t=1}^{n-k} \tilde Q_t.\]
Since we are in the setting of univariate time series we drop the indices $a$ and $b$.

In this section we treat the case of a univariate (i.\,e., $a = b$) AR(1) time series
\[\cO{X}{}{t} = \alpha \cO{X}{}{t-1} + \varepsilon_t = \sum_{j=0}^{\infty} \alpha^j \varepsilon_{t-j},\]
with parameter $\alpha \in (-1,1)$ and $\seq{\varepsilon_t}$ i.\,i.\,d. such that $\kappa_p := \cum_p(\varepsilon_t)$ exists for $p=8$ and $\kappa_p = 0$ when $p$ is odd.
We assume that $\kappa_1 = \E \varepsilon_t = 0$ and use the following moving average process as the $m$-dependent approximation
\[ \cO{Y}{}{t} = \sum_{j=0}^{m} \alpha^j \varepsilon_{t-j}.\]

We first treat the first and third term in the bound and the second and fourth term after that.
For the first and third term it suffices to compute
\begin{itemize}
	\item $\tilde D^{(q)} := \left[\E\left(\cO{X}{}{0} - \cO{Y}{}{0}\right)^q\right]^{1/q}$, for $q = 2, 4, 6, 8$, and
	\item $\E\left(\cO{X}{}{t+k} \cO{X}{}{t}\right)^q $, for $q = 1, 2, 3, 4$.
\end{itemize}
Note that $\|\cO{X}{}{t}\|_{2q}$ follows from $\E\left(\cO{X}{}{t+k} \cO{X}{}{t}\right)^q$ with $k=0$

For $\tilde D^{(q)}$, using Theorem~2.3.2 in~\cite{Brillinger1975} about cumulants of products, we obtain
\begin{equation*}
\begin{split}
& \E\left(\cO{X}{}{0} - \cO{Y}{}{0}\right)^q
= \E\left(\sum_{j=m+1}^{\infty} \alpha^j \cO{\varepsilon}{}{-j_{\ell}} \right)^q \\
& = \sum_{j_1=m+1}^{\infty} \cdots \sum_{j_q=m+1}^{\infty} \alpha^{j_1 + \cdots + j_q}
\sum_{\{\nu_1, \ldots, \nu_R\}} \prod_{r=1}^R \cum(\cO{\varepsilon}{}{-j_{\ell}} : \ell \in \nu_r ) \\
& = \sum_{\{\nu_1, \ldots, \nu_R\}} \prod_{r=1}^R \sum_{j=m+1}^{\infty} \alpha^{|\nu_r | j} \kappa_{|\nu_r|}
= \sum_{\{\nu_1, \ldots, \nu_R\}} \prod_{r=1}^R \kappa_{|\nu_r|} \alpha^{|\nu_r | (m+1)} \sum_{j=0}^{\infty} \alpha^{|\nu_r | j} \\
& = \alpha^{q(m+1)}\sum_{\{\nu_1, \ldots, \nu_R\}} \prod_{r=1}^R \frac{\kappa_{|\nu_r|}}{1 - \alpha^{|\nu_r|}}
\end{split}
\end{equation*}
where the sum is with respect to partitions $\{\nu_1, \ldots, \nu_R\}$ of $\{1, \ldots, q\}$. In particular, for $q = 2, 4, 8$, and since cumulants of odd orders vanish for the distributions considered, we have
\begin{equation*}
\begin{split}
\tilde D^{(2)} & = \alpha^{m+1} \Big( \frac{\kappa_2}{1-\alpha^2} \Big)^{1/2}, \\
\tilde D^{(4)} & = \alpha^{m+1} \Big( \frac{\kappa_4}{1-\alpha^4} + 3 \Big( \frac{\kappa_2}{1-\alpha^2} \Big)^2 \Big)^{1/4}, \\
\tilde D^{(8)} & = \alpha^{m+1} \Big( \frac{\kappa_8}{1-\alpha^8} + 28 \frac{\kappa_6}{1-\alpha^6} \frac{\kappa_2}{1-\alpha^2}
+ 35 \Big( \frac{\kappa_4}{1-\alpha^4} \Big)^2 \\
& \qquad + 210 \frac{\kappa_4}{1-\alpha^4} \Big( \frac{\kappa_2}{1-\alpha^2} \Big)^2
+ 105 \Big( \frac{\kappa_2}{1-\alpha^2} \Big)^4 \Big)^{1/8} \\
\end{split}
\end{equation*}

Similarly, we obtain that for $\alpha \neq 0$, we have
\begin{equation*}
\E\left(\cO{X}{}{t+k} \cO{X}{}{t}\right)^q = \alpha^{q |k|} \sum_{\{\nu_1, \ldots, \nu_R\}} \prod_{r=1}^R \frac{\kappa_{|\nu_r|}}{1-\alpha^{|\nu_r|}},
\end{equation*}
where the sum is with respect to partitions $\{\nu_1, \ldots, \nu_R\}$ of $\{1, \ldots, 2 q\}$.

In particular, for $q = 1,2,3,4$ we have,
\begin{equation}\label{XkX0_q}
\begin{split}
\E\left(\cO{X}{}{t+k} \cO{X}{}{t}\right)
& = \alpha^{|k|} \frac{\kappa_2}{1-\alpha^2} =: \cP{\gamma}{}{k}  \\
\E\left(\cO{X}{}{t+k} \cO{X}{}{t}\right)^2
& = \alpha^{2 |k|} \Big( \frac{\kappa_4}{1-\alpha^4} + 3 \Big( \frac{\kappa_2}{1-\alpha^2} \Big)^2 \Big) \\
\E\left(\cO{X}{}{t+k} \cO{X}{}{t}\right)^3 
& = \alpha^{3 |k|} \Big( \frac{\kappa_6}{1-\alpha^6} + 15 \frac{\kappa_4}{1-\alpha^4} \frac{\kappa_2}{1-\alpha^2}
+ 30 \Big( \frac{\kappa_2}{1-a^2} \Big)^3 \Big) \\
\E\left(\cO{X}{}{t+k} \cO{X}{}{t}\right)^4
& = \alpha^{4 |k|} \Big( \frac{\kappa_8}{1-\alpha^8} + 28 \frac{\kappa_6}{1-\alpha^6} \frac{\kappa_2}{1-\alpha^2}
+ 35 \Big( \frac{\kappa_4}{1-\alpha^4} \Big)^2 \\
& \qquad\qquad\qquad 	+ 210 \frac{\kappa_4}{1-\alpha^4} \Big( \frac{\kappa_2}{1-\alpha^2} \Big)^2
+ 105 \Big( \frac{\kappa_2}{1-\alpha^2} \Big)^4 \Big)
\end{split}
\end{equation}

Further, for $\alpha = 0$, we have
\begin{equation*}
\E\left(\cO{X}{}{t+k} \cO{X}{}{t}\right)^q =
\begin{cases}
\E \varepsilon_{t}^{2q} & k = 0 \\
\left(\E \varepsilon_{t}^{q} \right)^2 & k \neq 0. \\
\end{cases}
\end{equation*}

Thus, for the case of $\alpha = 0$ and $k = 0$ the expressions in~\eqref{XkX0_q} are correct when we apply the convention that $0^0 = 1$.
For $\alpha = 0$ and $k \neq 0$ we have 
\begin{equation*}
\begin{split}
\E\left(\cO{X}{}{t+k} \cO{X}{}{t}\right) \
& = 0 \\
\E\left(\cO{X}{}{t+k} \cO{X}{}{t}\right)^2
& = (\kappa_2)^2 \\
\E\left(\cO{X}{}{t+k} \cO{X}{}{t}\right)^3 
& = 0 \\
\E\left(\cO{X}{}{t+k} \cO{X}{}{t}\right)^4
& = \Big( \kappa_4 + 3 (\kappa_2)^2 \Big)^2
\end{split}
\end{equation*}

This covers the relevant pieces for the first and third term in the bound and we now turn our attention to the second and fourth term. To this end we first discuss how to compute joint cumulants of the $m$-dependent approximation in our example and then turn our attention to the second term of the bound.

\medskip

\noindent
\textbf{Joint cumulants of $\seq{\cO{Y}{}{t}}$.}
To compute the bound we will frequently require the joint cumulants of $\seq{\cO{Y}{}{t}}$. For $u_1, \ldots, u_p \in \IZ$ denote
$M := \min\{u_1, \ldots, u_p\}$,
$R := \max\{u_1, \ldots, u_p\} - M$, and $S := \sum_{i=1}^{p} (u_i - M)$.
Then, we have that
\begin{equation}\label{cumXm}
\begin{split}
& \cum( \cO{Y}{}{u_1}, \ldots, \cO{Y}{}{u_p}) \\
& = \sum_{j_1=0}^{m} \cdots \sum_{j_p=0}^{m} \alpha^{j_1 + \ldots + j_p} \cum(\cO{\varepsilon}{}{u_1 - M - j_1}, \ldots, \cO{\varepsilon}{}{u_p - M - j_p}) \\
&
= \begin{cases}
\sum\limits_{j_s=0}^{m-R} \alpha^{p j_s + S} \cum(\cO{\varepsilon}{}{-j_s}, \ldots, \cO{\varepsilon}{}{-j_s})
= \kappa_p \alpha^{S} \cfrac{1 - \alpha^{p (m - R + 1)}}{1 - \alpha^p} & R \leq m \\
0 & R > m.
\end{cases}
\end{split}
\end{equation}
For the second equality, note that $u_{\ell} - M \geq 0$, for all $\ell = 1, \ldots, p$, and $u_s - M = 0$ for at least one $s = 1, \ldots, p$. We now argue that for the cumulants $\cum(\varepsilon_{u_1 - M - j_1}, \ldots, \varepsilon_{u_p - M - j_p})$ with $(j_1, \ldots, j_p) \in \{1,\ldots,m\}^p$ and $j_s$ such that $u_s = M$, at most one of these cumulants is non zero. To this end, note that we have to have $u_s - M - j_s = -j_s = u_{\ell} - M -j_{\ell}$, for all $\ell \in 1,\ldots, p$, including $\ell = s$, which implies that $j_{\ell} = j_s + u_{\ell} - M \in \{1,\ldots,m\}$ for the cumulants that do not vanish. Hence, to ensure that $j_{\ell} \leq m$ for all $\ell$, we have to exclude $j_s > m-R$.
Further, we have that
$ u_{\ell_1} - j_{\ell_1} = u_{\ell_2} - j_{\ell_2}$, which implies that we have to require that
\[\max_{\ell_1, \ell_2 \in \{1, \ldots, p\}} |j_{\ell_1} - j_{\ell_2}| = \max_{\ell_1, \ell_2 \in \{1, \ldots, p\}}  |u_{\ell_1} - u_{\ell_2}| = R \leq m.\]

A very similar, but simpler, argument can be applied to obtain
\begin{equation*}
\cum( \cO{X}{}{u_1}, \ldots, \cO{X}{}{u_p})
= \kappa_p \cfrac{\alpha^{S}}{1 - \alpha^p}.
\end{equation*}

\medskip

\textbf{Computation of $\cO{\Sigma}{ab}{k}$ and $\cO{\tilde{\Sigma}}{ab}{k}$.}
To do so, we note that
\begin{equation*}
\begin{split}
\cP{\Sigma}{ab}{k} & = \sum_{u=-\infty}^{\infty} \cP{\gamma^2}{}{u}
+ \sum_{u=-\infty}^{\infty} \cP{\gamma}{}{u-k} \cP{\gamma}{}{u+k} + \frac{\kappa_4}{(\kappa_2)^2} \cP{\gamma^2}{}{k}\\
& = (\kappa_2)^2 \frac{1 + \alpha^2 + \alpha^{2|k|} \big(1 + \alpha^2 + 2 k (1 - \alpha^2)\big)}{(1-\alpha^2)^3} + \kappa_4 \frac{\alpha^{2|k|}}{(1-\alpha^2)^2}.
\end{split}
\end{equation*}

Further, we have
\begin{equation*}
\begin{split}
\cO{\tilde{\Sigma}}{ab}{k} & = n^{-1} \Var\Big( \sum_{t=1}^{n-k} \cO{Y}{}{t+k}\cO{Y}{}{t} \Big) \\
& = \frac{1}{n} \sum_{t_1=1}^{n-k} \sum_{t_2=1}^{n-k} \tilde C(t_1 - t_2)
= \frac{n-k}{n} \sum_{|u| \leq \min\{n - k,m\} } \Big( 1 - \frac{|u|}{n - k}\Big) \tilde C(u),
\end{split}
\end{equation*}
with $\tilde C(u)$ defined in~\eqref{def:cum}.
The cumulants of $\seq{\cO{Y}{}{t}}$ that appear in the definition of $\tilde C(u)$ can be computed as described in~\eqref{cumXm} of the previous section.

\medskip

It remains to discuss the fourth term of the bound.
Recall that in Section~\ref{sec:upper_bound} we discussed two methods to compute a bound for $\tilde Q_t$.
In this section we use the second method and discuss how to compute the exact value of the right hand side of~\eqref{bnd:Smn_new_bound}.

\medskip

\textbf{Computation of the bound for $\tilde Q_t$.}
We have
\begin{equation*}
\begin{split}
& \tilde Q_t \leq \Var \Big( \tilde Z(t) \sum_{j \in A_t} \tilde Z(j) \Big)^{1/2} \Var\Big(\sum_{j \in B_t} \tilde Z(j) \Big)^{1/2} + \frac{1}{2}  \Big[ \E \Big( \tilde Z(t) \sum_{j \in A_t} \tilde Z(j) \Big)^2 \Big]^{1/2} \Var \Big(\sum_{j \in A_t} \tilde Z(j) \Big)^{1/2} \\
& = \Big( M_{1,t} + \tilde C(0) \times M_{2a,t} + M_{3,t}^2 \Big)^{1/2} M_{2b,t}^{1/2}
+ \frac{1}{2} \Big( M_{1,t} + \tilde C(0) \times M_{2a,t} + 2 M_{3,t}^2 \Big)^{1/2} M_{2a,t}^{1/2},
\end{split}
\end{equation*}

where
\begin{equation*}
\begin{split}
M_{1,t} & := \sum_{j_1 \in A_t} \sum_{j_2 \in A_t} \sum_{\nu} \prod_{r=1}^R \cum ( \tilde Y_{ij} : (ij) \in \nu_r )
= \sum_{\ell_1 \in A_t - t} \sum_{\ell_2 \in A_t - t} \tilde D(\ell_1, \ell_2), \\
M_{2a,t} & := |A_t| \sum_{|u| \leq |A_t| - 1} \Big(1-\frac{|u|}{|A_t|}\Big) \tilde C(u),
\quad M_{2b,t} := |B_t| \sum_{|u| \leq |B_t| - 1} \Big(1-\frac{|u|}{|B_t|}\Big) \tilde C(u), \\
M_{3,t} & := \sum_{u \in A_t} \tilde C(u-t),
\hspace{2.35cm}  \tilde D(u_1, u_2) := \sum_{\nu} \prod_{r=1}^R \cum ( \tilde Y_{ij} : (ij) \in \nu_r )
\end{split}
\end{equation*}
where the sums in the definitions of $M_{1,t}$ and $\tilde D(u_1,u_2)$ are with respect to the indecomposable partitions~$\nu$ of Table~\ref{tab1}(a) and the terms $\tilde Y_{ij}$ are defined in Table~\ref{tab1}(b). We use the original indexing for the definition of $M_{1,t}$ and an indexing shifted by $t$ for the definition of $\tilde D(u_1,u_2)$. More precisely, of the following definitions we use the first in $M_{1,t}$ and the second in $\tilde D(u_1,u_2)$:
\begin{center}
	\begin{tabular}{ll}
		$\tilde Y_{1,1} := \cO{Y}{}{t+k} \stackrel{d}{=} \cO{Y}{}{k}$   & $\tilde Y_{1,2} := \cO{Y}{}{t} \stackrel{d}{=} \cO{Y}{}{0}$ \\
		$\tilde Y_{2,1} := \cO{Y}{}{t+k} \stackrel{d}{=} \cO{Y}{}{k}$   & $\tilde Y_{2,2} := \cO{Y}{}{t} \stackrel{d}{=} \cO{Y}{}{0}$ \\
		$\tilde Y_{3,1} := \cO{Y}{}{j_1+k} \stackrel{d}{=} \cO{Y}{}{u_1+k}$ & $\tilde Y_{3,2} := \cO{Y}{}{j_1} \stackrel{d}{=} \cO{Y}{}{u_1}$ \\
		$\tilde Y_{4,1} := \cO{Y}{}{j_2+k}\stackrel{d}{=} \cO{Y}{}{u_2+k}$ & $\tilde Y_{4,2} := \cO{Y}{}{j_2} \stackrel{d}{=} \cO{Y}{}{u_2}$
	\end{tabular}
\end{center}

Naively computing $\tilde Q_t$ is very costly. For computational efficiency, we therefore note that
\begin{itemize}
	\item $\tilde C(u) = \tilde C(-u)$,
	\item $\tilde D(u_1, u_2) = \tilde D(u_2, u_1)$,
	\item $M_{1,t}$, $M_{2a,t}$ and $M_{3,t}$ are constant on $t=m+k+1, \ldots, n-m-2k$, and
	\item $M_{2b,t}$ is constant on $t=2(m+k)+1, \ldots, n-2m-3k$.
\end{itemize}
Further, we have that the addends $\tilde C(u)$ and $\tilde D(\ell_1, \ell_2)$ do not depend on $t$. Therefore, for a given $m$ and $k$, we first compute $\tilde C(u)$, for $u=0,\ldots,2(m+k)$, and $\tilde D(u_1, u_2)$ for $u_1,u_2 = -(m+k), \ldots, m+k$ that satisfy $u_1 \leq u_2$. Then, computing the bound can be done rather efficiently.
\section{Details on simulating the Wasserstein distance}\label{sec:SimW1}
We obtain the Wasserstein distance, to compare our bound against, via simulation.
Denoting the cdf of $\sqrt{n} (\cPest{\gamma}{ab}{k} - \cP{\gamma}{ab}{k})$ by $F$ and the cdf of $N \sim \mathcal{N}\left(0,\cP{\Sigma}{ab}{k}\right)$ by $G$, we have
\begin{equation}\label{eqn:rep_W1}
W := d_{{\mathrm{W}}}\left(\mathcal{L}\left(\sqrt{n}\left(\cPest{\gamma}{ab}{k} - \cP{\gamma}{ab}{k}\right)\right),\mathcal{N}\left(0,\cP{\Sigma}{ab}{k}\right)\right) = \int_0^1 |F^{-1}(u) - G^{-1}(u)| {\rm d}u,
\end{equation}
where $F^{-1}(u) := \inf\{x \in \IR : F(x) \geq u\}$ is the quantile function of $F$; similarly, $G^{-1}$ is the quantile function for $G$.
We proceed by sampling i.\,i.\,d. $Z_1, \ldots, Z_R \sim F$ and then replace $F$ in~\eqref{eqn:rep_W1} by the empirical distribution $\mathbb{F}(x) := \frac{1}{R} \sum_{r=1}^R I\{Z_r \leq x\}.$ More precisely,
\begin{equation*}
\begin{split}
\eqref{eqn:rep_W1}
& \approx \int_0^1 |\mathbb{F}^{-1}(u) - G^{-1}(u)| {\rm d}u
= \sum_{r=1}^R \int_{(i-1)/R}^{i/R} |Z_{(r)} - G^{-1}(u)| {\rm d}u \\
& \approx \frac{1}{R} \sum_{r=1}^R \Big| Z_{(r)} - \sqrt{\cP{\Sigma}{ab}{k}} \Phi^{-1}\Big(\frac{2i-1}{2 R}\Big)\Big| =: \hat W_i.
\end{split}
\end{equation*}
If $F \neq G$ (and additional regularity conditions hold), then the error in the first approximation is of order $O_P(R^{-1/2})$; cf.\ e.\,g. \cite{MunkCzado1998}.
The error in the second approximation is of order $O(R^{-1})$.
Since we are considering $n$ to the order of thousands, we choose $R = 4 \times 10^6$.
Note that we have chosen $R$ as $2000^2$ and $n=2000$ is the maximum sample length.
Further, to reduce the variance of the (pseudo) random $\hat W_i$ we simulate independent copies of it ($i = 1, \ldots, B = 50$) and then report the average.
\section{Additional technical proofs}
\label{app:proofs}
\subsection{Proof of Lemma~\ref{Lemma_locally_dependent}}\label{app:proofs1}
For $N \sim \mathcal{N}(0,\sigma^2)$ and $h \in \mathcal{H}_{\mathrm{W}}$, let first $f = f_h$ be the solution of the Stein equation 
\begin{equation}
\label{stein_equation}
\sigma^2f'(w) - wf(w) = h(w) - \E[h(N)]
\end{equation}
for the $\mathcal{N}(0,\sigma^2)$ distribution. Because $\forall i \in J$, $\xi_i$ and $W - \xi(A_i)$, with $\xi(A)$ as in~\eqref{tau_eta}, are independent and since $\E\left(\xi_i\right) = 0$, we have that
\begin{equation}
\nonumber \E\left(Wf(W)\right) = \sum_{i \in J}\E\left(\xi_i f(W)\right) = \sum_{i \in J}\E\left(\xi_i\left(f(W) - f(W - \xi(A_i))\right)\right).
\end{equation}
Adding now and subtracting the quantity $\sum_{i \in J}\E\left(\xi_i\xi(A_i)f'(W)\right)$ yields
\begin{equation}
\nonumber \E\left(Wf(W)\right) = \sum_{i \in J}\E\left(\xi_i\left(f(W) - f(W - \xi(A_i)) - \xi(A_i)f'(W)\right)\right) + \E\left(\sum_{i \in J}\xi_i\xi(A_i)f'(W)\right).
\end{equation}
For $\E\left(W^2\right) = {\rm var}(W) = \sigma^2$, we have now that
\[\sigma^2 = \E\left(W^2\right) = \E\left(\sum_{i \in J}\xi_i\sum_{j \in J}\xi_j\right) = \sum_{i \in J}\E\left(\xi_i\xi(A_i)\right)\]
because $\xi_i$ is independent of all the elements that are not in $A_i$. Using now the Stein equation as in \eqref{stein_equation}, we have that
\begin{align}
\label{midstep_stein_lemma}
\nonumber & \E\left(\sigma^2f'(W) - Wf(W)\right) \\
\nonumber &  = \E\left(\sum_{i \in J}\E\left(\xi_i\xi(A_i)\right)f'(W)\right) \\
\nonumber & \qquad - \sum_{i \in J}\E\left(\xi_i\left(f(W) - f(W - \xi(A_i)) - \xi(A_i)f'(W)\right)\right)
- \E\left(\sum_{i \in J}\xi_i\xi(A_i)f'(W)\right)\\
\nonumber & = - \sum_{i \in J}\E\left(\left(\xi_i\xi(A_i) - \E(\xi_i\xi(A_i))\right)f'(W)\right) - \sum_{i \in J}\E\left(\xi_i\left(f(W) - f(W - \xi(A_i)) - \xi(A_i)f'(W)\right)\right)\\
\nonumber & = - \sum_{i \in J}\E\left(\left(\xi_i\xi(A_i) - \E(\xi_i\xi(A_i))\right)\left(f'(W) - f'(W - \xi(B_i))\right)\right) \\
& \qquad  - \sum_{i \in J}\E\left(\xi_i\left(f(W) - f(W - \xi(A_i)) - \xi(A_i)f'(W)\right)\right)
\end{align}
with the last equality being because both $\xi_i$ and $\xi(A_i)$ are independent with $W - \xi(B_i)$, where the notation of $\xi(A)$ is given in \eqref{tau_eta} for any $A \subset J$. Therefore,
\begin{equation}
\nonumber \sum_{i \in J}\E\left(\left(\xi_i\xi(A_i) - \E(\xi_i\xi(A_i))\right)f'(W - \xi(B_i))\right) = \sum_{i \in J}\E\left(\xi_i\xi(A_i) - \E(\xi_i\xi(A_i))\right)\E\left(f'(W - \xi(B_i))\right) = 0.
\end{equation}
Using a first-order Taylor expansion,
\begin{equation}
\label{lemma_Stein_Taylor1}
f'(W) - f'(W - \xi(B_i)) = \xi(B_i)f''(W^*),
\end{equation}
for $W^*$ between $W$ and $W - \xi(B_i)$. Furthermore, using now a second order Taylor expansion of $f(W - \xi(A_i))$ about $W$, leads to
\begin{equation}
\label{lemma_Stein_Taylor2}
f(W) - f(W-\xi(A_i)) - \xi(A_i)f'(W) = -\frac{(\xi(A_i))^2}{2}f''(\tilde{W}),
\end{equation}
where $\tilde{W}$ is between $W$ and $W - \xi(A_i)$. We denote the sup-norm of a function by $\| \cdot \|$ and applying the results of \eqref{lemma_Stein_Taylor1} and \eqref{lemma_Stein_Taylor2} to \eqref{midstep_stein_lemma} yields
\begin{align}
\label{midstep_stein_lemma2}
\nonumber & \left|\E\left[h(W)\right] - \E\left[h(N)\right]\right| = \left|\E\left(\sigma^2f'(W) - Wf(W)\right)\right|\\
\nonumber & = \left|- \sum_{i \in J}\E\left(\left(\xi_i\xi(A_i) - \E(\xi_i\xi(A_i))\right)\xi(B_i)f''(W^*)\right) + \sum_{i \in J}\E\left(\xi_i\frac{(\xi(A_i))^2}{2}f''(\tilde{W})\right)\right|\\
\nonumber & \leq \sum_{i \in J}\E\left|\left(\xi_i\xi(A_i) - \E(\xi_i\xi(A_i))\right)\xi(B_i)f''(W^*)\right| + \sum_{i \in J}\E\left|\xi_i\frac{(\xi(A_i))^2}{2}f''(\tilde{W})\right|\\
\nonumber & \leq \|f''\|\sum_{i \in J}\E\left|\left(\xi_i\xi(A_i) - \E(\xi_i\xi(A_i))\right)\xi(B_i)\right| + \frac{1}{2}\|f''\|\sum_{i \in J}\E\left|\xi_i(\xi(A_i))^2\right|\\
& \leq \|f''\|\sum_{i \in J}\left\lbrace\E\left|\left(\xi_i\xi(A_i) - \E(\xi_i\xi(A_i))\right)\xi(B_i)\right| + \frac{1}{2}\E\left|\xi_i(\xi(A_i))^2\right|\right\rbrace.
\end{align} 
From Section~2.2 of \cite{Chen_et_al_book2011}, we know that the solution of the Stein equation in \eqref{stein_equation} satisfies that $\|f''\| \leq 2\frac{\|h'\|}{\sigma^3}$. Using this result in \eqref{midstep_stein_lemma2} yields
\begin{equation}
\nonumber \left|\E\left[h(W)\right] - \E\left[h(N)\right]\right| \leq 2\frac{\|h'\|}{\sigma^3}\sum_{i \in J}\left\lbrace\E\left|\left(\xi_i\xi(A_i) - \E(\xi_i\xi(A_i))\right)\xi(B_i)\right| + \frac{1}{2}\E\left|\xi_i(\xi(A_i))^2\right|\right\rbrace.
\end{equation}
Since we work for $h \in H_{\mathrm{W}}$ as in~\eqref{classes_functions}, we have that $\|h'\|\leq 1$, which leads to the result of the lemma.$\qquad\hfill\square$

\subsection{Proof of Lemmas~\ref{lem:approx_jnt_moments} and a generalized version of it}\label{app:proofs:jnt_moments}

Lemma~\ref{lem:approx_jnt_moments_nonstat}, stated and proved below, entails Lemma~\ref{lem:approx_jnt_moments} as a special case. The important difference between the two lemmas is that Lemma~\ref{lem:approx_jnt_moments_nonstat} can also be employed in the context of non-stationary data.

\begin{lemma}\label{lem:approx_jnt_moments_nonstat}
	For $p \in \IN$ and $\alpha \geq 1$,
	let $X_1, \ldots, X_p, Y_1, \ldots, Y_p$ be $\R$-valued random variables with $\| X_i \|_{\alpha p} < \infty$ and $\| Y_i \|_{\alpha p} < \infty$, $i = 1,\ldots, p$.
	Then, with $\Lambda_p$ as in Lemma~\ref{lem:approx_jnt_moments}, we have
	\begin{equation}\label{eqn:approx_jnt_q_moments_nonstat}
	\Big\| \prod_{i=1}^p X_i - \prod_{i=1}^p Y_i \Big\|_{\alpha}
	\leq \sum_{(\ell_1, \ldots, \ell_p) \in \Lambda_p} \prod_{i=1}^p \| X_i - Y_i \|_{\alpha p}^{\ell_i} \| X_i \|_{\alpha p}^{1-\ell_i},
	\end{equation}
\end{lemma}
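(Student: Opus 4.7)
My plan is to prove the lemma by a telescoping expansion followed by the generalized H\"older inequality. The first step is to write down the classical telescoping identity
\[
\prod_{i=1}^{p} X_i - \prod_{i=1}^{p} Y_i = \sum_{j=1}^{p} \Big(\prod_{i=1}^{j-1} X_i\Big)(X_j - Y_j)\Big(\prod_{i=j+1}^{p} Y_i\Big),
\]
which factors out a $(X_j - Y_j)$ for each $j$ but still leaves $Y_i$'s on the tail.

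Since the target bound is expressed in terms of $\|X_i\|_{\alpha p}$ rather than $\|Y_i\|_{\alpha p}$, the second step is to rewrite each trailing $Y_i$ as $X_i - (X_i - Y_i)$ and expand the tail product. For fixed $j$, this turns $\prod_{i=j+1}^{p} Y_i$ into a signed sum over subsets $T \subseteq \{j+1,\ldots,p\}$, each summand being $(-1)^{|T|} \prod_{i \in T}(X_i - Y_i) \prod_{i \in \{j+1,\ldots,p\}\setminus T} X_i$. Each term in the global expansion is then, up to a sign, a product of exactly $p$ factors, each factor being either $X_i$ or $X_i - Y_i$.

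The third step is to match the indexing with $\Lambda_p$: associate to each pair $(j,T)$ the vector $(\ell_1,\ldots,\ell_p)$ with $\ell_i = 0$ for $i<j$, $\ell_j = 1$, and $\ell_i = \mathbf{1}\{i \in T\}$ for $i > j$. This is a bijection with $\Lambda_p$, since the definition in Lemma~\ref{lem:approx_jnt_moments} is precisely the partition by the position $j$ of the first $1$, with free $\{0,1\}$-choices afterwards. The fourth and final step is to apply the triangle inequality, followed by the generalized H\"older inequality to each product of $p$ factors with common exponent $\alpha p$ (so the reciprocals sum to $1/\alpha$), yielding
\[
\Big\|\prod_{i=1}^{p} W_i\Big\|_{\alpha} \leq \prod_{i=1}^{p} \|X_i - Y_i\|_{\alpha p}^{\ell_i} \|X_i\|_{\alpha p}^{1-\ell_i}
\]
for each term, and summing over $(\ell_1,\ldots,\ell_p) \in \Lambda_p$ gives the claim.

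The main difficulty is not analytic but notational: one has to unpack the three-piece definition of $\Lambda_p$ and verify carefully that the enumeration $(j, T)$ covers it exactly once, with $j$ recording the first index where $\ell_i = 1$ (which ensures that the earlier entries are forced to be $0$, matching the structure in the definition). Once this bijection is established, the H\"older step is routine and the signs disappear under the triangle inequality.
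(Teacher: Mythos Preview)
Your proof is correct and follows essentially the same approach as the paper: the same telescoping identity, the same generalized H\"older inequality with common exponent $\alpha p$, and the same identification of the resulting terms with $\Lambda_p$. The only cosmetic difference is the order of operations: the paper applies H\"older first to obtain $\prod_{i>j}\|Y_i\|_{\alpha p}$, then uses $\|Y_i\|_{\alpha p}\le \|X_i\|_{\alpha p}+\|X_i-Y_i\|_{\alpha p}$ and expands that product, whereas you expand $Y_i = X_i-(X_i-Y_i)$ inside the product first and then apply H\"older termwise; both routes produce exactly the same sum over $\Lambda_p$.
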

\noindent
\textbf{Proof of Lemma~\ref{lem:approx_jnt_moments}.}
Lemma~\ref{lem:approx_jnt_moments} follows from the more general Lemma~\ref{lem:approx_jnt_moments_nonstat} if we apply it with $X_i := \cO{X}{a_i}{t_i}$ and $Y_i := \cO{Y}{a_i}{t_i}$ such that $\| X_i - Y_i \|_{\alpha p} \leq \tilde D_{a_i}^{(\alpha p)}$.

\clearpage
\noindent
\textbf{Proof of Lemma~\ref{lem:approx_jnt_moments_nonstat}.}
For $p = 1$ the assertion is obvious.
For $p \geq 2$ the assertion follows from the following chain of inequalities, where we denote $D_i := \| X_i - Y_i \|_{\alpha p}$
\begin{equation*}
\begin{split}
&  \Big\| \prod_{i=1}^p X_i - \prod_{i=1}^p Y_i \Big\|_{\alpha}
= \Big\| \sum_{j=1}^p \Big( \prod_{\substack{i=1 \\ i < j}}^p X_i \Big) \big( X_j - Y_j \big) \Big( \prod_{\substack{i=1 \\ i > j}}^p Y_i \Big)  \Big] \Big\|_{\alpha} \\
& \leq \sum_{j=1}^p \Big\| \Big( \prod_{\substack{i=1 \\ i < j}}^p X_i \Big) \big( X_j - Y_j \big) \Big( \prod_{\substack{i=1 \\ i > j}}^p Y_i \Big) \Big\|_{\alpha} \\
& \leq \sum_{j=1}^p \Big( \big\| X_j - Y_j \big\|_{\alpha p} \prod_{\substack{i=1 \\ i < j}}^p \| X_i \|_{\alpha p} \prod_{\substack{i=1 \\ i > j}}^p \big( \| X_i \|_{\alpha p} + \big\| X_i - Y_i \big\|_{\alpha p} \big) \Big) \\
\end{split}
\end{equation*}
\begin{equation*}
\begin{split}
& = D_{1} \big(\| X_2 \|_{\alpha p} + D_2 \big) \cdots \big(\| X_p \|_{\alpha p} + D_p \big) \\
& \quad + I\{p > 2\} \sum_{j=2}^{p-1} \| X_1 \|_{\alpha p} \cdots \| X_{j-1} \|_{\alpha p} D_j
\big(\| X_{j+1} \|_{\alpha p} + D_{j+1} \big) \cdots \big(\| X_p \|_{\alpha p} + D_p \big) \\
& \quad + D_p \| X_1 \|_{\alpha p} \cdots \| X_{p-1} \|_{\alpha p} \\
& = D_1 \sum_{(\ell_2, \ldots, \ell_p) \in \{0,1\}^{p-1}} \| X_2 \|_{\alpha p}^{1-\ell_2} D_2^{\ell_2} \cdots \| X_p \|_{\alpha p}^{1-\ell_p} D_p^{\ell_p} \\
& \quad + I\{p > 2\} \sum_{j=2}^{p-1} \| X_1 \|_{\alpha p} \cdots \| X_{j-1} \|_{\alpha p} D_j \\
& \qquad\qquad
\times \sum_{(\ell_{j+1}, \ldots, \ell_p) \in \{0,1\}^{p-j}} \| X_{j+1} \|_{\alpha p}^{1-\ell_{j+1}} D_{j+1}^{\ell_{j+1}}  \cdots \| X_p \|_p^{1-\ell_p} D_p^{\ell_p} \\
& \quad + \| X_1 \|_{\alpha p} \cdots \| X_{p-1} \|_{\alpha p} D_p,
\end{split}
\end{equation*}
where the first inequality follows due to the triangle inequality that we have because $\alpha \geq 1$, and the second inequality follows by (a generalization of) H\"{o}lder's inequality.\hfill$\square$
\subsection{Proof of Lemma~\ref{lem:bnd_var_mdep_est}}\label{app:proofs:bnd_var_mdep_est}
Note that, by the triangle inequality and the definition of $\cP{\tilde\Sigma}{ab}{k}$ in~\eqref{def:tildeSigma} of the main paper, we have
\begin{equation*}
\begin{split}
|\cP{\tilde\Sigma}{ab}{k} - \cP{\Sigma}{ab}{k}|
\leq &
\Big| \Var\left(n^{-1/2} \cPest{\gamma^{(m)}}{ab}{k} \right) - \Var\left(n^{-1/2} \cPest{\gamma}{a b}{k} \right) \Big|\\
& + \Big| n \Var\big(\cPest{\gamma}{a b}{k}\big) - \cP{\Sigma}{a b}{k} \Big|,
\end{split}
\end{equation*}
where
\begin{equation*}
\cPest{\gamma^{(m)}}{ab}{k} := \frac{1}{n} \sum_{t=1}^{n-k} \cO{Y}{a}{t+k} \cO{Y}{b}{t} \text{ for $k=0,\ldots,n-1$.}
\end{equation*}

We then have the following, more general, bound which we will subsequently use to derive the assertion of the lemma:
\begin{equation*}
\begin{split}
& \Big| \Cov\big(\sqrt{n} \cPest{\gamma}{a_1 b_1}{k_1}, \sqrt{n} \cPest{\gamma}{a_2 b_2}{k_2} \big) - \Cov\big(\sqrt{n} \cPest{\gamma^{(m)}}{a_1 b_1}{k_1}, \sqrt{n} \cPest{\gamma^{(m)}}{a_2 b_2}{k_2} \big) \Big| \\
& \leq \frac{1}{n} \sum_{t_1=1}^{n-k_1} \sum_{t_2=1}^{n-k_2} \Big| \E\big[ \cO{X}{a_1}{t_1+k_1} \cO{X}{b_1}{t_1} \cO{X}{a_2}{t_2+k_2} \cO{X}{b_2}{t_2} \big] \\
& \qquad\qquad\qquad\qquad - \E\big[ \cO{Y}{a_1}{t_1+k_1} \cO{Y}{b_1}{t_1} \cO{Y}{a_2}{t_2+k_2} \cO{Y}{b_2}{t_2} \big]\Big| \\
& \quad + \frac{1}{n} \sum_{t_1=1}^{n-k_1} \sum_{t_2=1}^{n-k_2} \Big| \Big( \E\big[ \cO{X}{a_1}{t_1+k_1} \cO{X}{b_1}{t_1} \big] - \E\big[ \cO{Y}{a_1}{t_1+k_1} \cO{Y}{b_1}{t_1} \big] \Big) \E\big[ \cO{X}{a_2}{t_2+k_2} \cO{X}{b_2}{t_2} \big] \\
& \qquad\qquad + \Big( \E\big[\cO{X}{a_2}{t_2+k_2} \cO{X}{b_2}{t_2} \big] - \E\big[\cO{Y}{a_2}{t_2+k_2} \cO{Y}{b_2}{t_2} \big] \Big) \E\big[ \cO{Y}{a_1}{t_1+k_1} \cO{Y}{b_1}{t_1} \big] \Big| \\
& \leq \frac{(n-k_1)(n-k_2)}{n} \Big( K_{4,m}(\{a_1, b_1, a_2, b_2\}) + K_{2,m}(\{a_1, b_1\}) |\cP{\gamma}{a_2 b_2}{k_2}| \\
& \qquad\qquad\qquad\qquad + \big( |\cP{\gamma}{a_1 b_1}{k_1}| + K_{2,m}(\{a_1, b_1\}) \big) K_{2,m}(\{a_2, b_2\} \Big) \\
& \leq \frac{(n-k_1)(n-k_2)}{n} \Big(\max_{i=a_1,b_1,a_2,b_2} \tilde D_{i}^{(4)}\Big) \\
& \quad \times \Bigg( \sum_{j \in \{a_1, b_1, a_2, b_2\}} \ \prod_{i \in \{a_1, b_1, a_2, b_2\} \setminus \{j\}} (\|\cO{X}{i}{0}\|_4 + \tilde D_{i}^{(4)}) \\
& \qquad + ( \|\cO{X}{a_1}{0}\|_2 + \|\cO{X}{b_1}{t}\|_2 + \min_{i=a_1,b_1} \tilde D_{i}^{(2)} ) |\cP{\gamma}{a_2 b_2}{k_2}| \\
& \qquad + ( \|\cO{X}{a_2}{0}\|_2 + \|\cO{X}{b_2}{t}\|_2 + \min_{i=a_2,b_2} \tilde D_{i}^{(2)} ) |\cP{\gamma}{a_1 b_1}{k_1}| \\
& \qquad + \min\Big\{\max_{i=a_1,b_1} \tilde D_{i}^{(2)}, \max_{i=a_2,b_2} \tilde D_{i}^{(2)}\Big\} \\
& \qquad\quad \times \Big( \|\cO{X}{a_1}{0}\|_2 + \|\cO{X}{b_1}{t}\|_2 + \min_{i=a_1,b_1} \tilde D_{i}^{(2)} \Big)\Big( \|\cO{X}{a_2}{0}\|_2 + \|\cO{X}{b_2}{t}\|_2 + \min_{i=a_2,b_2} \tilde D_{i}^{(2)} \Big) \Bigg) \\
\end{split}
\end{equation*}
where for the third inequality we have, by Lemma~\ref{lem:approx_jnt_moments} with $\alpha = 1$ and $p = 2$, that
\begin{equation*}
\begin{split}
\tilde K_2^{(1)} (\{a,b\}) & = \tilde D_a^{(2)} \|\cO{X}{b}{t}\|_2 + \tilde D_a^{(2)} \tilde D_b^{(2)} + \|\cO{X}{a}{0}\|_2 \tilde D_b^{(2)} \\
& \leq \Big(\max_{i=a,b}\tilde D_i^{(2)}\Big) ( \|\cO{X}{a}{0}\|_2 + \|\cO{X}{b}{t}\|_2 + \min_{i=a,b} \tilde D_i^{(2)} )
\end{split}
\end{equation*}
and, by Lemma~\ref{lem:approx_jnt_moments} with $\alpha = 1$ and $p = 4$, we have that
\begin{equation*}
\begin{split}
& \tilde K_4^{(1)}(\{a_1, b_1, a_2, b_2\}) \\
& = \tilde D_{a_1}^{(4)} (\|\cO{X}{b_1}{0}\|_4 + \tilde D_{b_1}^{(4)})(\|\cO{X}{a_2}{0}\|_4 + \tilde D_{a_2}^{(4)})(\|\cO{X}{b_2}{0}\|_4 + \tilde D_{b_2}^{(4)}) \\
& \quad + \tilde D_{b_1}^{(4)} \|\cO{X}{a_1}{0}\|_4 (\|\cO{X}{a_2}{0}\|_4 + \tilde D_{a_2}^{(4)})(\|\cO{X}{b_2}{0}\|_4 + \tilde D_{b_2}^{(4)}) \\
& \quad + \tilde D_{a_2}^{(4)} \|\cO{X}{a_1}{0}\|_4 \|\cO{X}{b_1}{0}\|_4 (\|\cO{X}{b_2}{0}\|_4 + \tilde D_{b_2}^{(4)}) \\
& \quad + \tilde D_{b_2}^{(4)} \|\cO{X}{a_1}{0}\|_4 \|\cO{X}{b_1}{0}\|_4 \|\cO{X}{a_2}{0}\|_4 \\
& \leq \Big(\max_{i=a_1,b_1,a_2,b_2} \tilde D_{i}^{(4)}\Big) \sum_{j \in \{a_1, b_1, a_2, b_2\}} \ \prod_{i \in \{a_1, b_1, a_2, b_2\} \setminus \{j\}} (\|\cO{X}{i}{0}\|_4 + \tilde D_{i}^{(4)}).
\end{split}
\end{equation*}
For the third inequality we had further employed the fact that
\[\max_{i=j_1,j_2} \tilde D_{i}^{(2)} \leq \max_{i=a_1,b_1,a_2,b_2} \tilde D_{i}^{(4)}, \quad \text{for any $j_1, j_2 \in \{a_1, b_1, a_2, b_2\}$}.\]
The assertion of the lemma follows, after some simplications applied to the special case where we have $k_1 = k_2$, $a_1 = a_2$ and $b_1 = b_2$.\hfill$\square$

\subsection{Proof of Lemma~\ref{lem:bnd1_Smn}}\label{app:proofs:bnd1_Smn}


We obtain the assertion from the following chain of inequalities
\begin{align}
\nonumber & |Q_t - \tilde Q_t| \\
\nonumber & \leq \Big| \|( Z_t Z(A_t) - \E( Z_t Z(A_t) )) Z(B_t) \|_1
- \|( \tilde Z_t \tilde Z(A_t) - \E( \tilde Z_t \tilde Z(A_t) )) \tilde Z(B_t) \|_1 \Big| \\
\nonumber & \qquad + \Big| \frac{1}{2} \| Z_t Z(A_t)^2 \|_1 - \frac{1}{2} \| \tilde Z_t \tilde Z(A_t)^2 \|_1 \Big| \\
\nonumber & \leq \E\Big| ( Z_t Z(A_t) - \E( Z_t Z(A_t) ) ) Z(B_t) - ( \tilde Z_t \tilde Z(A_t) - \E( \tilde Z_t \tilde Z(A_t))) \tilde Z(B_t) \Big| \\
\nonumber & \qquad + \frac{1}{2} \E\left| Z_t Z(A_t)^2 - \tilde Z_t \tilde Z(A_t)^2 \right| \\
\nonumber & \leq \E\left| Z_t Z(A_t) Z(B_t) - \tilde Z_t \tilde Z(A_t) \tilde Z(B_t) \right|
+ \E\left| \E( Z_t Z(A_t) ) Z(B_t) - \E( \tilde Z_t \tilde Z(A_t) ) \tilde Z(B_t) \right| \\
\nonumber & \qquad + \frac{1}{2} \E\left| Z_t Z(A_t)^2 - \tilde Z_t \tilde Z(A_t)^2 \right| \\
\label{bnd1_Smn:A} & \leq \sum_{j_1 \in A_t} \sum_{j_2 \in B_t} \E\left| Z_t  Z_{j_1}  Z_{j_2} - \tilde Z_t \tilde Z_{j_1} \tilde Z_{j_2} \right|
+ \frac{1}{2} \sum_{j_1 \in A_t} \sum_{j_2 \in A_t} \E\left| Z_t  Z_{j_1}  Z_{j_2} - \tilde Z_t \tilde Z_{j_1} \tilde Z_{j_2} \right| \\
\label{bnd1_Smn:B} & \qquad + \E\left| \E\Bigg( Z_t \sum_{j \in A_t} Z_j \Bigg)  \sum_{j \in B_t} (Z_j - \tilde Z_j) \right| \\
\label{bnd1_Smn:C} & \qquad + \E \left| \Bigg[ \E\Bigg( Z_t \sum_{j \in A_t} Z_j \Bigg) - \E\Bigg( \tilde Z_t \sum_{j \in A_t} \tilde Z_j \Bigg) \Bigg] \sum_{j \in B_t} (Z_j + \tilde Z_j - Z_j) \right|,
\end{align}
where we have used the triangle inequality and reverse triangle inequality.
The asserted bound for $|Q_t - \tilde Q_t|$ will be the sum of the bounds we now derive for~\eqref{bnd1_Smn:A}, \eqref{bnd1_Smn:B} and~\eqref{bnd1_Smn:C}. First, we derive two preliminary bounds to control $\| Z_0 \|_{\alpha}$ and $\| Z_0 - \tilde Z_0 \|_{\alpha}$, $\alpha = 1, 2, 3$. Employing the triangle inequality and H\"{o}lder's inequality we have
\begin{equation}\label{bnd_Z_alpha}
\| Z_0 \|_{\alpha}
\leq \| \cO{X}{a}{t+k} \cO{X}{b}{t} \|_{\alpha} + |\E[\cO{X}{a}{t+k} \cO{X}{b}{t}]| \leq 2 \| \cO{X}{a}{0} \|_{2 \alpha} \| \cO{X}{b}{0} \|_{2 \alpha}
\end{equation}
and, using the triangle inequality and Lemma~\ref{lem:approx_jnt_moments}, we have
\begin{equation}\label{bnd_diff_Z_alpha}
\begin{split}
\| Z_0 - \tilde Z_0 \|_{\alpha}
& \leq 2 \| \cO{X}{a}{t+k} \cO{X}{b}{t} - \cO{Y}{a}{t+k} \cO{Y}{b}{t} \|_{\alpha} \leq 2 \tilde K_{2}^{(\alpha)}(\{a,b\})
\end{split}
\end{equation}

Next, to bound~\eqref{bnd1_Smn:A}, we use Lemma~\ref{lem:approx_jnt_moments_nonstat} with $\alpha = 1$, $p=3$, $X_1, X_2, X_3$ equal to $Z_t,  Z_{j_1},  Z_{j_2}$ and $Y_1, Y_2, Y_3$ equal to $\tilde Z_t, \tilde Z_{j_1}, \tilde Z_{j_2}$, respectively.

Then, $\|X_i\|_{\alpha} = \|Z_0\|_{\alpha}$ and $\|X_i - Y_i\|_{\alpha} = \|Z_0 - \tilde Z_0\|_{\alpha}$, such that
\begin{equation}\label{bnd_3Z}
\begin{split}
& \E\left| Z_t  Z_{j_1}  Z_{j_2} - \tilde Z_t \tilde Z_{j_1} \tilde Z_{j_2} \right| \\
& \leq \| Z_0 - \tilde Z_0 \|_3 \Big( 3 \| Z_0 \|_3^2 + 3 \| Z_0 \|_3 \| Z_0 - \tilde Z_0 \|_3 + \| Z_0 - \tilde Z_0 \|_3^2 \Big) \\
& \leq 2 \tilde K_{2}^{(3)} \Big( 3 \big( 2 \| \cO{X}{a}{0} \|_{6} \| \cO{X}{b}{0} \|_{6} + \tilde K_{2}^{(3)} \big)^2 + (\tilde K_{2}^{(3)})^2 \Big) \\
\end{split}
\end{equation}
Clearly, this implies that
\begin{equation}\label{bnd1_Smn:A2}
\eqref{bnd1_Smn:A} \leq 2 \tilde K_{2}^{(3)} \Big( 3 \big( 2 \| \cO{X}{a}{0} \|_{6} \| \cO{X}{b}{0} \|_{6} + \tilde K_{2}^{(3)} \big)^2 + (\tilde K_{2}^{(3)})^2 \Big) \Big( |A_t| |B_t| + \frac{1}{2} |A_t|^2 \Big)
\end{equation}
For \eqref{bnd1_Smn:B} we have
\begin{equation}\label{bnd1_Smn:B2}
\eqref{bnd1_Smn:B} = \Big| \sum_{j \in A_t} \E( Z_t  Z_j ) \Big| \times \E\left|  \sum_{j \in B_t} (Z_j - \tilde Z_j) \right| \leq \Big| \sum_{j \in A_t} \E( Z_t  Z_j ) \Big| \times  |B_t| \times 2 \tilde K_{2}^{(1)},
\end{equation}
where we have used~\eqref{bnd_diff_Z_alpha} with $\alpha=1$.
Finally, to bound~\eqref{bnd1_Smn:C}, we use Lemma~\ref{lem:approx_jnt_moments_nonstat} similar to how we used it in~\eqref{bnd_3Z}, but with $p = 2$, and obtain
\begin{equation*}
\begin{split}
\E\left| Z_t  Z_j - \tilde Z_t \tilde Z_j \right|
& \leq \| Z_0 - \tilde Z_0 \|_2 \Big( 2 \| Z_0 \|_2 + \| Z_0 - \tilde Z_0 \|_2 \Big) \\
& \leq 2 \tilde K_{2}^{(2)} \Big( 4 \| \cO{X}{a}{0} \|_{4} \| \cO{X}{b}{0} \|_{4} + 2 \tilde K_{2}^{(2)} \Big),
\end{split}
\end{equation*}
where the second inequality followed from the preliminary bounds~\eqref{bnd_Z_alpha} and~\eqref{bnd_diff_Z_alpha}, with $\alpha = 2$.
Thus, we have
\begin{equation}\label{bnd1_Smn:C2}
\begin{split}
\eqref{bnd1_Smn:C}
& =  \Bigg| \E\Bigg( Z_t \sum_{j \in A_t} Z_j \Bigg) - \E\Bigg( \tilde Z_t \sum_{j \in A_t} \tilde Z_j \Bigg) \Bigg| \times \E \Bigg[ \Big| \sum_{j \in B_t} (Z_j + \tilde Z_j - Z_j) \Big| \Bigg] \\
& \leq \sum_{j \in A_t} \E \Big| Z_t Z_j - \tilde Z_t \tilde Z_j \Big|
\times \sum_{j \in B_t}  \Big( \|Z_0\|_1 + \| \tilde Z_0 - Z_0 \|_1 \Big) \\
& \leq 2 \tilde K_{2}^{(2)} \Big( 4 \| \cO{X}{a}{0} \|_{4} \| \cO{X}{b}{0} \|_{4} + 2 \tilde K_{2}^{(2)} \Big) | A_t | \\
&  \qquad \times 2 \Big( \| \cO{X}{a}{0} \|_{2} \| \cO{X}{b}{0} \|_{2} + \tilde K_{2}^{(1)} \Big) |B_t|.
\end{split}
\end{equation}
Summing the three bounds obtained in~\eqref{bnd1_Smn:A2}, \eqref{bnd1_Smn:B2} and \eqref{bnd1_Smn:C2} yields the assertion.
\hfill$\square$

\subsection{Proof of Proposition~\ref{prop:rate}}\label{prf:prop:rate}The bound given in~\eqref{final_upper_bound} consists of four terms that we now discuss one by one, for Regimes~1 and~2.
	For Regime~1, we assume that $\vO{Y^{(m)}}{t}$ is chosen as described in Remark~\ref{rem:asym_reg}.
	In this proof, we will refer to the first, second, third and fourth term of the bound, respectively, meaning the respective addends in~\eqref{final_upper_bound}.
	For the first term of the bound, it is obvious that $kn^{-1/2}|\cO{\gamma}{ab}{k}| = \mathcal{O}(n^{-1/2})$, in both regimes.
	With respect to the second term of the bound we now show that it is of the order $\mathcal{O}(n^{-1/2})$, in both regimes, if $m$ is chosen appropriately.
	The expression $|\cP{\Sigma}{ab}{k} - \cP{\tilde\Sigma}{ab}{k}|$ can be controlled by employing Lemma~\ref{lem:bnd_var_mdep_est}.
	Therefore,
	\begin{align}
	\nonumber \frac{\sqrt{2}}{\sqrt{\pi\cO{\Sigma}{ab}{k}}}\left|\cO{\Sigma}{ab}{k} - \cO{\tilde{\Sigma}}{ab}{k}\right| & \leq 
	\frac{\sqrt{2}}{\sqrt{\pi\cO{\Sigma}{ab}{k}}}\Big| n \Var\big(\cPest{\gamma}{a b}{k}\big) - \cP{\Sigma}{a b}{k} \Big| \\
	\label{third_term_order_middle_step_b}
	& \quad + 2 \frac{\sqrt{2}}{\sqrt{\pi\cO{\Sigma}{ab}{k}}} \frac{(n-k)^2}{n} \Big(\max_{i=a,b} \tilde D_{i,m}^{(4)} \Big) \tilde F_m,
	\end{align}
	where $\tilde F_m$ is given in~\eqref{eq:Fm}. In Regime~1, $\tilde D_{j,m}^{(q)} = 0$ for $n \geq M$ and thus~\eqref{third_term_order_middle_step_b} vanishes for $n$ large enough. In Regime~2, since fourth moments are finite, from~\eqref{A:mdep_geom} and Jensen's inequality, we have that $$\min_{i=a,b} \tilde D_{i,m}^{(2)} \leq \max_{i=a,b} \tilde D_{i,m}^{(2)} \leq \max_{i=a,b} \tilde D_{i,m}^{(4)} = o(1),$$ as $m \rightarrow \infty$, which implies $\tilde F_m = \mathcal{O}(1)$.
	Now, since $m = C\log n$, with $C \geq -3/(2\log(\rho))$, we have that~\eqref{third_term_order_middle_step_b} is of the order $\mathcal{O}(n \rho^m) = \mathcal{O}(n^{-1/2})$, in Regime~2 and we had already seen that this is the case in Regime~1.
	It remains to assess the order of $|n \Var\big(\cPest{\gamma}{a b}{k}\big) - \cP{\Sigma}{a b}{k}|$, the difference of the finite sample variance and the asymptotic variance of the empirical cross-covariance.
	This quantity is of the order $\mathcal{O}(n^{-1})$ under condition~\eqref{eqn:sum_cum}, which we have argued holds in both regimes.
	This can, for example, be concluded from the arguments provided in Section~7.6 in \cite{Brillinger1975}.
	In conclusion, the second term of the bound is of the order $\mathcal{O}(n^{-1/2})$, in Regimes~1 and~2.
	
	With respect to the third term of the bound, $2(n-k) n^{-1/2} \tilde K_{m}$,
	note that in Regime 1 we have that $\tilde K_{m}$ vanishes eventually, as $\tilde D_{j,m}^{(q)} = 0$ for $n \geq M$ and in Regime~2 we have $\tilde K_{m} = \mathcal{O}(\rho^m)$, by~\eqref{A:mdep_geom}.
	Therefore, the third term in the bound vanishes for $n$ large enough in Regime~1 and is of order $\mathcal{O}(\sqrt{n}\rho^m) = \mathcal{O}(n^{-1/2})$ in Regime~2, if we choose $m = C\log n$, and it suffices that we have $C \geq -1/\log(\rho)$. In conclusion, the third term in the bound is of the order $\mathcal{O}(n^{-1/2})$, in both regimes.
	
	We now proceed to discuss the rate of the fourth term of the bound,  $2n^{-3/2} (\cO{\tilde{\Sigma}}{ab}{k})^{-3/2} \sum_{t=1}^{n-k} \tilde Q_t$. For this term, the regime makes a difference in the outcome of our analysis. From the discussion of the second term of the bound we have, $\cO{\tilde{\Sigma}}{ab}{k} \rightarrow \cO{\Sigma}{ab}{k} > 0$, as $n \rightarrow \infty$.
	By~\eqref{eqn:outl_bnd_tildeSmn_1}, Lemma~\ref{lem:bnd1_Smn} and condition~\eqref{A:mdep_geom} (which holds in both regimes according to Remark~\ref{rem:asym_reg}), it suffices to show that $\sup_{t=1,\ldots,n-k} Q_t$ is of the order $\mathcal{O}(1)$ in Regime~1 and of the order $\mathcal{O}(\log n)$ in Regime 2, respectively.
	
	Regime~1: Bounding $Q_t$ as in~\eqref{bnd:Qt_naive}, but with $\seq{\vO{Y^{(m)}}{t}}$ replaced by $\seq{\vO{X}{t}}$, we have $Q_t \leq 3 (4m + 4k + 1)^2 \|\vO{X}{0}\|_6^6 = \mathcal{O}(m^2) = \mathcal{O}(1)$, since $m = \min\{n,M\} \leq M$.
	
	Regime~2: Here we use a version of~\eqref{bnd:Smn_new_bound}, with $\seq{\vO{Y^{(m)}}{t}}$ replaced by $\seq{\vO{X}{t}}$:
	\begin{equation}\label{bnd:Qt_new_bound}
	\begin{split}
	Q_t & \leq \Var \Big( Z_t \sum_{j \in A_t} Z_j \Big)^{1/2} \Var\Big(\sum_{j \in B_t} Z_j \Big)^{1/2}\\
	&\quad + \frac{1}{2}  \Big[ \E \Big( Z_t \sum_{j \in A_t} Z_j \Big)^2 \Big]^{1/2} \Var \Big(\sum_{j \in A_t} Z_j \Big)^{1/2}
	\end{split}
	\end{equation}
	It suffices to show that the three quantities defined in~\eqref{bnd_parts}, with $\vO{Y^{(m)}}{t}$ replaced by $\vO{X}{t}$, are $\mathcal{O}(m)$. We now show that this is implied by condition~\eqref{eqn:sum_cum}, the summability of cumulants up to order~8. Following arguments from Section \ref{app:tech_details:Sec23}, we have
	\begin{equation}\label{bnd:Qt_new_bound2}
	\Var\Big(\sum_{j \in A_t} Z_j \Big)
	\leq |A_t| \sum_{u = -\infty}^{\infty} |C(u)|,
	\quad \text{and} \quad
	\E \Big( Z_t \sum_{j \in A_t} Z_j \Big)
	\leq \sum_{u = -\infty}^{\infty}  |C(u)|,
	\end{equation}
	where
	\begin{equation*}
	C(u) = \cum(\cO{X}{a}{k}, \cO{X}{b}{0}, \cO{X}{a}{u+k}, \cO{X}{b}{u}) + \cO{\gamma}{aa}{u} \cO{\gamma}{bb}{u} + \cO{\gamma}{ab}{k-u} \cO{\gamma}{ab}{k+u},
	\end{equation*}
	which is summable according to condition~\eqref{eqn:sum_cum}.
	\begin{table}[t]
		\caption{Table of indices and variables to be partitioned for the addends of the sum in~\eqref{cumEight_b}\label{tab2}}
		\centering
		\begin{subtable}[h]{0.35\textwidth}
			\centering
			\begin{tabular}{cc}
				\toprule
				(1,$a$) & (1,$b$) \\
				(2,$a$) & (2,$b$) \\
				(3,$a$) & (3,$b$) \\
				(4,$a$) & (4,$b$) \\
				\bottomrule
			\end{tabular}
			\caption{Table of indices to be partitioned}
			\label{tab:2a}
		\end{subtable}
		\hspace{1cm}
		\begin{subtable}[h]{0.5\textwidth}
			\centering
			\begin{tabular}{ll}
				\toprule
				$Y_{1,a} := \cO{X}{a}{t+k}$   & $Y_{1,b} := \cO{X}{b}{t}$ \\
				$Y_{2,a} := \cO{X}{a}{t+k}$   & $Y_{2,b} := \cO{X}{b}{t}$ \\
				$Y_{3,a} := \cO{X}{a}{j_1+k}$ & $Y_{3,b} := \cO{X}{b}{j_1}$ \\
				$Y_{4,a} := \cO{X}{a}{j_2+k}$ & $Y_{4,b} := \cO{X}{b}{j_2}$ \\
				\bottomrule
			\end{tabular}
			\caption{Variables of which the cumulants are considered}
			\label{tab2b}
		\end{subtable}
	\end{table}
	Finally, with notation from Table~\ref{tab2}(b), we have
	\begin{equation}\label{cumEight_b}
	\begin{split}
	& \sum_{j_1 \in A_t} \sum_{j_2 \in A_t} \cum ( Z_t, Z_t, Z_{j_1}, Z_{j_2} )
	\leq \sum_{\nu} S(\nu),\\
	& S(\nu) := \sum_{j_1 = -\infty}^{\infty} \sum_{j_2 = -\infty}^{\infty} \prod_{r=1}^R |\cum ( Y_{ij} : (ij) \in \nu_r )|,
	\end{split}
	\end{equation}
	where the sum extends over all indecomposable partitions $\nu := \{\nu_1, \ldots, \nu_R\}$ of the indices in Table~\ref{tab2}(a).
	To complete our argument it suffices to show that for every indecomposable partition $\nu$ of Table~\ref{tab2}(a), with $|\nu_r| \geq 2$,
	\begin{equation}\label{exist_C_nu}
	\exists C_{\nu}:|S(\nu)| \leq C_{\nu},
	\end{equation}
	with $C_{\nu}$ being independent of $t$. Then, since there are only finitely many $\nu$, we have shown that the left-hand side in~\eqref{cumEight_b} is $\mathcal{O}(1)$ uniformly in~$t$. The rigorous proof of~\eqref{exist_C_nu} is straightforward but tedious. It is given below after the remaining steps to the proof of Proposition~\ref{prop:rate}.
	
	In conclusion, by~\eqref{bnd:Qt_new_bound} in combination with the discussion of how to compute~\eqref{bnd:Smn_new_bound}, together with \eqref{bnd:Qt_new_bound2}, \eqref{cumEight_b}, \eqref{exist_C_nu}, $|A_t| \leq 4(m+k)+1$ and the choice of $m = C \log n$ for the first, second and third term of the bound, we have shown that in Regime~2 we have $\sup_{t=1,\ldots,n-k} Q_t = \mathcal{O}(\log n)$, which implies that the fourth term of the bound under Regime~2 is of the order $\mathcal{O}(n^{-1/2} \log n)$.
	
We now complete the proof by showing that for every $\nu$ as in~\eqref{exist_C_nu}, one of the two following assertions holds true:
\begin{itemize}
	\item[(a)] there exists one set $\nu_{\ell}$ in $\nu$ such that
	\[\sum_{j_1 = -\infty}^{\infty} \sum_{j_2 = -\infty}^{\infty} |\cum ( Y_{ij} : (ij) \in \nu_{\ell} )| \leq c_{\nu},\]
	
	\item[(b)] or there exist two distinct sets $\nu_{\ell_1}$ and $\nu_{\ell_2}$ in $\nu$ such that
	\[\sum_{j_1 = -\infty}^{\infty} \sum_{j_2 = -\infty}^{\infty} |\cum ( Y_{ij} : (ij) \in \nu_{\ell_1} )| |\cum ( Y_{ij} : (ij) \in \nu_{\ell_2} )| \leq c_{\nu},\]
\end{itemize}
with the constant $c_{\nu}$ not depending on $t$. The desired $S(\nu) \leq C_{\nu}$ then follows from applying
\begin{equation}\label{eqn:bnd_cumY}
|\cum(Y_{ij} : (ij) \in \nu_r)| \leq |\nu_r|^{|\nu_r|} (|\nu_r|-1)! \max\{1, \| \vO{X}{0} \|_{|\nu_r|} \}^{|\nu_r|}  =: B(\nu_r),
\end{equation}
to the remaining factors in the definition of $S(\nu)$.
Note that $B(\nu_r)$ does not depend on $t$.
The inequality in~\eqref{eqn:bnd_cumY} follows from definition~\eqref{def:cum}, the triangle inequality, the fact that the number of partitions of $\nu_r$ can be bounded by $|\nu_r|^{|\nu_r|}$, the fact that the maximum number of sets in a partition of $\nu_r$ is $|\nu_r|$, generalised H\"{o}lder inequality, stationarity and Jensen's inequality.
In the following, we refer to (3,1) and (3,2) from Table~\ref{tab2}(a) as the $j_1$-indices and we refer to (4,1) and (4,2) as the $j_2$-indices. By $j$-indices we will refer to (3,1), (3,2), (4,1) and (4,2), and by non-$j$-indices we will refer to the remaining (1,1), (1,2), (2,1) and (2,2). 
Now, assume that $\nu := \{\nu_1, \ldots, \nu_R\}$ is an indecomposable partition of Table~\ref{tab2}(a), where each $\nu_r$ contains at least two elements. To show that either (a) or (b) holds, it clearly suffices to focus on the possible arrangements of the $j$-indices. To see this, note that by~\eqref{eqn:bnd_cumY} we can ignore sets that don't contain at least one $j$-index.

Before we discuss all possible arrangements of $j$-indices in the indecomposable partitions $\nu$ with $|\nu_r| \geq 2$, we now explain how we show that (a) or (b) holds: To show that (a) holds, for a given partition $\nu := \{\nu_1, \ldots, \nu_R\}$, it suffices to \emph{identify a set $\nu_{\ell}$ in $\nu$ that contains at least one $j_1$-index, at least one $j_2$-index and at least one non-$j$-index.} We then have $|\nu_{\ell}| =: p \geq 3$. Say that the elements in $\nu_{\ell} := \{(u_1, v_1), \ldots, (u_p, v_p)\}$ are ordered such that a $j_1$-index is the first, a $j_2$-index is second and a non-$j$-index is $p$th. Then, by \eqref{eqn:bnd_cumY} and the stationarity of $\seq{\vO{X}{t}}$ we have
\begin{equation*}
\begin{split}
& S(\nu) \leq \Bigg( \prod_{\substack{r=1\\r\neq\ell}}^R B(\nu_r) \Bigg) \sum_{j_1 = -\infty}^{\infty} \sum_{j_2 = -\infty}^{\infty} |\cum ( \cO{X}{v_1}{j_1+h_1-s}, \cO{X}{v_2}{j_2+h_2-s}, \ldots, \cO{X}{v_p}{0} )| \\
& \leq \Bigg( \prod_{\substack{r=1\\r\neq\ell}}^R B(\nu_r) \Bigg) \sum_{i_1 = -\infty}^{\infty} \ldots\sum_{i_{p-1} = -\infty}^{\infty} |\cum ( \cO{X}{v_1}{i_1}, \ldots, \cO{X}{v_{p-1}}{i_{p-1}}, \cO{X}{v_p}{0} )| =: c_{\nu}, 
\end{split}
\end{equation*}
where $h_1, h_2 \in \{0,k\}$ (depending on $v_1$ and $v_2$), $s \in \{t, t+k\}$ (depending on $v_p$), and the $\ldots$ in the first line represents $|\nu_{\ell}|-3$ variables from Table~\ref{tab2}(a) with their time index shifted by $s$.

Now, to show that (b) holds, for a given partition $\nu := \{\nu_1, \ldots, \nu_R\}$, it suffices to \emph{identify sets $\nu_{\ell_1}$ and $\nu_{\ell_2}$ that satisfy the following conditions}:
\begin{itemize}
	\item[(I)] $\nu_{\ell_1} := \{(u_{1,1}, v_{1,1}), \ldots, (u_{1,p_1}, v_{1,p_1})\}$ contains at least one $j_1$-index;
	\item[(II)] $\nu_{\ell_2}  := \{(u_{2,1}, v_{2,1}), \ldots, (u_{2,p_2}, v_{2,p_2})\}$ contains at least one $j_2$-index;
	\item[(III)] either $\nu_{\ell_1}$ or $\nu_{\ell_2}$ contains at least one non-$j$-index;
	\item[(IV)] if $\nu_{\ell_1}$ contains both $j_1$-indices, then it must contain at least three elements and also if $\nu_{\ell_2}$ contains both $j_2$-indices, then it must contain at least three elements.
\end{itemize}
Note that for any indecomposable partition, (IV) is always satisfied, because a partition with a set of exactly two elements that are $j_1$-indices or exactly two elements that are $j_2$-indices is decomposable.

First, consider the case where the non-$j$-index which we know exists due to (III) is in~$\nu_{\ell_1}$.
The case where it is in $\nu_{\ell_2}$ can be treated analogously.
We can number the sets and indices such that $(u_{1,1}, v_{1,1})$ is a $j_1$-index, $(u_{2,1}, v_{2,1})$ is a $j_2$-index, and $(u_{1,p_1}, v_{1,p_1})$ is a non-$j$-index.
Further, due to (IV) and $|\nu_{\ell_2}| \geq 2$, $(u_{2,p_2}, v_{2,p_2})$ is not a $j_2$-index.
Thus, for $\nu_{\ell_1}$ and $\nu_{\ell_2}$ satisfying (I)--(IV) we have
\begin{equation*}
\begin{split}
& S(\nu) \leq \Bigg( \prod_{\substack{r=1\\r\notin\{\ell_1, \ell_2\}}}^R B(\nu_r) \Bigg) \sum_{j_1 = -\infty}^{\infty} \sum_{j_2 = -\infty}^{\infty} |\cum ( \cO{X}{v_{1,1}}{j_1+h_1-s_1}, \ldots, \cO{X}{v_{1,p_1}}{0} )| \\
& \hspace{5.7cm} \times |\cum ( \cO{X}{v_{2,1}}{j_2+h_2-s_2}, \ldots, \cO{X}{v_{2,p_2}}{0} )| \\
& \leq \Bigg( \prod_{\substack{r=1\\r\notin\{\ell_1, \ell_2\}}}^R B(\nu_r) \Bigg) \sum_{i_{1,1} = -\infty}^{\infty} \ldots\sum_{i_{1,p_1-1} = -\infty}^{\infty} |\cum ( \cO{X}{v_{1,1}}{i_{1,1}}, \ldots, \cO{X}{v_{1,p_1-1}}{i_{1,p_1-1}}, \cO{X}{v_{1,p_1}}{0} )| \\
& \hspace{0.3cm} \times  \sum_{i_{2,1} = -\infty}^{\infty} \ldots\sum_{i_{2,p_2-1} = -\infty}^{\infty} |\cum ( \cO{X}{v_{2,1}}{i_{2,1}}, \ldots, \cO{X}{v_{2,p_2-1}}{i_{2,p_2-1}}, \cO{X}{v_{2,p_2}}{0} )| =: c_{\nu}, 
\end{split}
\end{equation*}
where $h_1, h_2 \in \{0,k\}$ (depending on the values of $v_{1,1}$ and $v_{2,1}$), $s_1 \notin \{j_1, j_1+k, j_2, j_2 + k\}$, and $s_2 \notin \{j_2, j_2+k\}$.
For the second inequality we have substituted $j_1 + h_1-s_1$ by $i_{1,1}$, which corresponds to a shift in the index of the $j_1$-sum, since $(u_{1,p_1}, v_{1,p_1})$ is a non-$j$-index, and then bounded in two steps
\begin{multline}\label{bnd:j1_cum}
|\cum ( \cO{X}{v_{1,1}}{i_{1,1}}, \ldots, \cO{X}{v_{1,p_1}}{0} )| \\
\leq \sum_{i_{1,2} = -\infty}^{\infty} \ldots\sum_{i_{1,p_1-1} = -\infty}^{\infty} |\cum ( \cO{X}{v_{1,1}}{i_{1,1}}, \ldots, \cO{X}{v_{1,p_1-1}}{i_{1,p_1-1}}, \cO{X}{v_{1,p_1}}{0} )|,
\end{multline}
where the right-hand side in~\eqref{bnd:j1_cum} does not depend on $j_2$ (anymore).
Then, $S(\nu) \leq c_{\nu}$ follows from
\begin{multline*}
\sum_{j_2 = -\infty}^{\infty} |\cum ( \cO{X}{v_{2,1}}{j_2+h_2-s_2}, \ldots, \cO{X}{v_{2,p_2}}{0} )| \\
\leq \sum_{i_{2,1} = -\infty}^{\infty} \ldots\sum_{i_{2,p_2-1} = -\infty}^{\infty} |\cum ( \cO{X}{v_{2,1}}{i_{2,1}}, \ldots, \cO{X}{v_{2,p_2-1}}{i_{2,p_2-1}}, \cO{X}{v_{2,p_2}}{0} )|,
\end{multline*}
which we have since $(u_{2,p_2}, v_{2,p_2})$ is not a $j_2$-index.
To conclude the proof it remains to discuss all possible arrangements of the $j$-indices.
To cover all cases, we organise the following according to the number of sets in the partition that contain at least one $j$-index.
\begin{enumerate}
	\item There exists one $\nu_{\ell}$ in the partition that contains all $j$-indices. In this case $\nu_{\ell}$ has to also contain at least one more non-$j$-index, because otherwise the partition would be decomposable. Thus, from the above, we have that (a) holds.
	
	\item There exist two sets $\nu_{\ell_1}$ and $\nu_{\ell_2}$ such that each contains at least one $j$-index and their union contains all $j$-indices. The situation where $|\nu_{\ell_1}| = |\nu_{\ell_2}| = 2$ would imply a decomposable partition and is therefore not possible. Thus, we either have $|\nu_{\ell_1}| \geq 3$ and $|\nu_{\ell_2}| \geq 2$ or $|\nu_{\ell_1}| \geq 2$ and $|\nu_{\ell_2}| \geq 3$. Two sub-cases are possible: (i) one of the sets contains three $j$-indices and the other contains only one $j$-index, or (ii) $\nu_{\ell_1}$ and $\nu_{\ell_2}$ both contains exactly two $j$-indices.
	
	If (i), then the set containing only one $j$-index has to also contain at least one non-$j$-index (i.\,e., (III) is satisfied). Then (b) holds, since if the set with only one $j$-index contains, say, a $j_1$-index then the other set contains a $j_2$-index (or vice versa; i.\,e., (I) and (II) are satisfied). 
	
	If (ii), then the set with at least three elements contains a non-$j$-index (i.\,e., (III) is satisfied). Further, since each set contains exactly two $j$-indices and there are exactly two $j_1$-indices and two $j_2$-indices we have that if one set contains a $j_1$-index, the other has to contain a $j_2$-index (i.\,e., (I) and (II) are satisfied). Thus, (b) holds.
	
	\item There exist a set with exactly two $j$-indices and two sets with exactly one $j$-index each. Then, because of $|\nu_r| \geq 2$, the two sets with one $j$-index also contain at least one non-$j$-indices. Two sub-cases are possible: the $j$-indices in the sets with exactly one $j$-index are either (i) both $j_1$-indices or both $j_2$-indices, or (ii) we have an $j_1$-index in one and a $j_2$-index in the other set.
	
	If (i), then the $j$-indices in the set with exactly two $j$-indices are either both $j_2$-indices or both $j_1$-indices, respectively. The set with exactly two $j$-indices (either two $j_1$- or two $j_2$-indices) then has to have another non-$j$-index, as the partition would otherwise be decomposable. Taking $\nu_{\ell_1}$ as the set with two $j$-indices and $\nu_{\ell_2}$ as one of the two sets with exactly one $j$-index, we see that (b) holds: the indices are $j_1$- and $j_2$ indices in the two sets (i.\,e., (I) and (II) are satisfied) and each set has at least one non-$j$-index (i.\,e., (III) is satisfied).
	
	If (ii), then we take $\nu_{\ell_1}$ as one of the sets with exactly one $j_1$-index and $\nu_{\ell_2}$ as the set with exactly one $j_2$-index (i.\,e., (I) and (II) are satisfied). Then (b) is satisfied, as each of these sets also has one non-$j$-index (i.\,e., (III) is satisfied).
	
	\item There exist four sets with exactly one $j$-index each. We take $\nu_{\ell_1}$ as one of the sets with exactly one $j_1$-index and $\nu_{\ell_2}$ as the set with exactly one $j_2$-index (i.\,e., (I) and (II) are satisfied). Each of these sets also has one non-$j$-index (i.\,e., (III) is satisfied), thus (b) is satisfied.
\end{enumerate}
This finishes the proof of~\eqref{exist_C_nu} in the main paper and also clarified how to find $C_{\nu}$ in terms of the sum in~\eqref{eqn:sum_cum} and the quantities $B(\nu_r)$ defined in~\eqref{eqn:bnd_cumY}, which are all independent of~$t$.

\hfill$\square$

\section{Additional tables for the examples in the paper}\label{sec:adSimRes}
In this section we provide the values of the bound and of the true 1-Wasserstein distances considered in Theorem~\ref{general_theorem} for the case of the example in Section~\ref{sec:expl:AR1} in the following two scenarios:
$\varepsilon(t) \sim \mathcal{N}(0,1)$, and $\varepsilon(t) \sim \sqrt{12/14} t_{14}$.

\begin{table}[H]
\caption{Value of the bound from Theorem~\ref{general_theorem} in combination with \eqref{bnd:Smn_new_bound}, with $m = m^*$ to minimise the bound as described in Section~\ref{sec:comp_bnd}, for empirical autocovariances, for a range of lags $k$ and sample sizes $n$. The data stems from an AR(1) process with $\varepsilon_t \sim \mathcal{N}(0,1)$ where $\alpha$ takes a range of values. \label{tab:bnd_N01}}
\centering
\begin{tabular}{ll |r@{\extracolsep{0.3cm}}r@{\extracolsep{0.3cm}}r@{\extracolsep{0.1cm}}r@{\extracolsep{0.3cm}}r@{\extracolsep{0.3cm}}r@{\extracolsep{0.3cm}}r@{\extracolsep{0.2cm}}r@{\extracolsep{0.1cm}}r@{\extracolsep{0.2cm}}r}
  \hline
 $k$ & $\alpha$ $\vert$ $n$ & \multicolumn{1}{l}{25} & \multicolumn{1}{l}{50} & \multicolumn{1}{l}{75} & \multicolumn{1}{l}{100} & \multicolumn{1}{l}{150} & \multicolumn{1}{l}{    200} & \multicolumn{1}{l}{250} & \multicolumn{1}{l}{500} & \multicolumn{1}{l}{1000} & \multicolumn{1}{l}{2000} \\ 
   \hline
0 & 0            &  0.912 &  0.645 &  0.527 &  0.456 &  0.372 &  0.322 &  0.288 &  0.204 &  0.144 &  0.102 \\ 
    & 0.1          &  4.438 &  3.784 &  3.219 &  2.894 &  2.533 &  2.340 &  2.222 &  1.661 &  1.239 &  0.967 \\ 
    & 0.3          &  7.005 &  5.701 &  5.033 &  4.673 &  3.984 &  3.585 &  3.324 &  2.564 &  1.986 &  1.488 \\ 
    & 0.5          &  9.842 &  8.080 &  7.164 &  6.484 &  5.676 &  5.110 &  4.715 &  3.650 &  2.798 &  2.128 \\ 
    & 0.7          & 13.981 & 11.712 & 10.379 &  9.485 &  8.289 &  7.511 &  6.931 &  5.375 &  4.124 &  3.137 \\ 
  1 & 0            &  2.564 &  1.818 &  1.485 &  1.286 &  1.050 &  0.909 &  0.813 &  0.574 &  0.406 &  0.287 \\ 
    & 0.1          &  6.778 &  5.045 &  4.259 &  3.801 &  3.279 &  2.988 &  2.752 &  1.996 &  1.477 &  1.135 \\ 
    & 0.3          &  9.065 &  6.998 &  6.105 &  5.498 &  4.658 &  4.170 &  3.849 &  2.907 &  2.226 &  1.648 \\ 
    & 0.5          & 11.388 &  9.075 &  7.926 &  7.126 &  6.179 &  5.522 &  5.084 &  3.886 &  2.947 &  2.221 \\ 
    & 0.7          & 14.561 & 12.279 & 10.808 &  9.820 &  8.531 &  7.699 &  7.091 &  5.452 &  4.152 &  3.138 \\ 
  2 & 0            &  4.088 &  2.916 &  2.385 &  2.067 &  1.688 &  1.462 &  1.308 &  0.924 &  0.653 &  0.462 \\ 
    & 0.1          &  8.272 &  6.135 &  5.159 &  4.582 &  3.917 &  3.540 &  3.249 &  2.348 &  1.726 &  1.312 \\ 
    & 0.3          & 10.061 &  7.882 &  6.848 &  6.167 &  5.216 &  4.658 &  4.288 &  3.197 &  2.429 &  1.792 \\ 
    & 0.5          & 12.843 & 10.204 &  8.804 &  7.902 &  6.803 &  6.068 &  5.575 &  4.234 &  3.196 &  2.400 \\ 
    & 0.7          & 15.174 & 13.119 & 11.527 & 10.442 &  9.040 &  8.137 &  7.485 &  5.726 &  4.342 &  3.270 \\ 
   \hline
\end{tabular} 
\end{table}

\begin{table}[H]
\caption{Value of the true 1-Wasserstein distance considered in Theorem~\ref{general_theorem} for empirical autocovariances, for a range of lags $k$ and sample sizes $n$. The data stems from an AR(1) process with $\varepsilon_t \sim \mathcal{N}(0,1)$ where $\alpha$ takes a range of values. \label{tab:W1_N01}}
\centering
\begin{tabular}{ll |rrrrrrrrrr}
  \hline
 $k$ & $\alpha$ $\vert$ $n$  & \multicolumn{1}{l}{    25} & \multicolumn{1}{l}{    50} & \multicolumn{1}{l}{    75} & \multicolumn{1}{l}{   100} & \multicolumn{1}{l}{   150} & \multicolumn{1}{l}{   200} & \multicolumn{1}{l}{   250} & \multicolumn{1}{l}{   500} & \multicolumn{1}{l}{  1000} & \multicolumn{1}{l}{  2000} \\ 
   \hline
0 & 0            & 0.129 & 0.091 & 0.074 & 0.065 & 0.053 & 0.046 & 0.041 & 0.029 & 0.020 & 0.014 \\ 
    & 0.1          & 0.135 & 0.096 & 0.078 & 0.068 & 0.055 & 0.048 & 0.043 & 0.030 & 0.021 & 0.015 \\ 
    & 0.3          & 0.191 & 0.136 & 0.112 & 0.097 & 0.080 & 0.069 & 0.062 & 0.044 & 0.031 & 0.022 \\ 
    & 0.5          & 0.360 & 0.261 & 0.215 & 0.187 & 0.153 & 0.133 & 0.119 & 0.084 & 0.060 & 0.042 \\ 
    & 0.7          & 0.970 & 0.717 & 0.595 & 0.519 & 0.428 & 0.372 & 0.333 & 0.237 & 0.168 & 0.119 \\ 
  1 & 0            & 0.051 & 0.026 & 0.018 & 0.014 & 0.009 & 0.007 & 0.005 & 0.003 & 0.001 & 0.001 \\ 
    & 0.1          & 0.079 & 0.052 & 0.041 & 0.035 & 0.028 & 0.024 & 0.022 & 0.015 & 0.011 & 0.008 \\ 
    & 0.3          & 0.209 & 0.149 & 0.122 & 0.106 & 0.087 & 0.075 & 0.067 & 0.048 & 0.034 & 0.024 \\ 
    & 0.5          & 0.440 & 0.317 & 0.260 & 0.226 & 0.185 & 0.161 & 0.144 & 0.102 & 0.072 & 0.051 \\ 
    & 0.7          & 1.132 & 0.828 & 0.684 & 0.596 & 0.490 & 0.425 & 0.381 & 0.271 & 0.192 & 0.136 \\ 
  2 & 0            & 0.062 & 0.032 & 0.022 & 0.016 & 0.011 & 0.008 & 0.007 & 0.003 & 0.002 & 0.001 \\ 
    & 0.1          & 0.067 & 0.035 & 0.024 & 0.019 & 0.013 & 0.010 & 0.008 & 0.005 & 0.003 & 0.002 \\ 
    & 0.3          & 0.144 & 0.098 & 0.078 & 0.067 & 0.054 & 0.047 & 0.042 & 0.029 & 0.021 & 0.015 \\ 
    & 0.5          & 0.406 & 0.293 & 0.241 & 0.210 & 0.172 & 0.149 & 0.133 & 0.095 & 0.067 & 0.047 \\ 
    & 0.7          & 1.206 & 0.881 & 0.727 & 0.633 & 0.520 & 0.451 & 0.404 & 0.287 & 0.203 & 0.144 \\ 
   \hline
\end{tabular} 
\end{table}

\begin{table}[H]
\caption{Value of the bound from Theorem~\ref{general_theorem} in combination with \eqref{bnd:Smn_new_bound}, with $m = m^*$ to minimise the bound as described in Section~\ref{sec:comp_bnd}, for empirical autocovariances, for a range of lags $k$ and sample sizes $n$. The data stems from an AR(1) process with $\varepsilon_t \sim \sqrt{12/14} \ t_{14}$ where $\alpha$ takes a range of values. \label{tab:bnd_t14}}
\centering
\begin{tabular}{ll |r@{\extracolsep{0.3cm}}r@{\extracolsep{0.3cm}}r@{\extracolsep{0.1cm}}r@{\extracolsep{0.3cm}}r@{\extracolsep{0.3cm}}r@{\extracolsep{0.3cm}}r@{\extracolsep{0.2cm}}r@{\extracolsep{0.1cm}}r@{\extracolsep{0.2cm}}r}
	\hline
	$k$ & $\alpha$ $\vert$ $n$  & \multicolumn{1}{l}{25} & \multicolumn{1}{l}{50} & \multicolumn{1}{l}{75} & \multicolumn{1}{l}{100} & \multicolumn{1}{l}{150} & \multicolumn{1}{l}{    200} & \multicolumn{1}{l}{250} & \multicolumn{1}{l}{500} & \multicolumn{1}{l}{1000} & \multicolumn{1}{l}{2000} \\ 
   \hline
0 & 0            &  0.912 &  0.645 &  0.527 &  0.456 &  0.372 &  0.322 &  0.288 &  0.204 &  0.144 &  0.102 \\ 
    & 0.1          &  6.056 &  5.796 &  4.892 &  4.349 &  3.727 &  3.378 &  3.153 &  2.500 &  1.842 &  1.402 \\ 
    & 0.3          &  9.007 &  7.383 &  6.423 &  5.888 &  5.109 &  4.573 &  4.217 &  3.270 &  2.512 &  1.905 \\ 
    & 0.5          & 11.020 &  9.007 &  7.946 &  7.205 &  6.294 &  5.672 &  5.235 &  4.064 &  3.136 &  2.411 \\ 
    & 0.7          & 14.064 & 11.794 & 10.500 &  9.621 &  8.441 &  7.673 &  7.099 &  5.554 &  4.312 &  3.335 \\ 
  1 & 0            &  2.564 &  1.818 &  1.485 &  1.286 &  1.050 &  0.909 &  0.813 &  0.574 &  0.406 &  0.287 \\ 
    & 0.1          &  7.224 &  5.339 &  4.505 &  4.017 &  3.460 &  3.147 &  2.928 &  2.135 &  1.589 &  1.229 \\ 
    & 0.3          &  8.995 &  6.966 &  6.092 &  5.490 &  4.675 &  4.202 &  3.890 &  2.950 &  2.274 &  1.719 \\ 
    & 0.5          & 11.204 &  8.934 &  7.829 &  7.068 &  6.139 &  5.515 &  5.099 &  3.934 &  3.023 &  2.320 \\ 
    & 0.7          & 14.097 & 11.893 & 10.569 &  9.656 &  8.449 &  7.658 &  7.089 &  5.518 &  4.267 &  3.290 \\ 
  2 & 0            &  4.088 &  2.916 &  2.385 &  2.067 &  1.688 &  1.462 &  1.308 &  0.924 &  0.653 &  0.462 \\ 
    & 0.1          &  9.329 &  6.892 &  5.780 &  5.121 &  4.358 &  3.922 &  3.583 &  2.587 &  1.898 &  1.435 \\ 
    & 0.3          & 10.481 &  8.065 &  6.983 &  6.240 &  5.282 &  4.721 &  4.347 &  3.262 &  2.485 &  1.847 \\ 
    & 0.5          & 12.334 &  9.736 &  8.425 &  7.589 &  6.562 &  5.879 &  5.421 &  4.152 &  3.166 &  2.408 \\ 
    & 0.7          & 14.490 & 12.447 & 10.980 & 10.019 &  8.750 &  7.917 &  7.322 &  5.676 &  4.369 &  3.354 \\ 
   \hline
\end{tabular} 
\end{table}

\begin{table}[H]
\caption{Value of the true 1-Wasserstein distance considered in Theorem~\ref{general_theorem} for empirical autocovariances, for a range of lags $k$ and sample sizes $n$. The data stems from an AR(1) process with $\varepsilon_t \sim \sqrt{12/14} \ t_{14}$ where $\alpha$ takes a range of values. \label{tab:W1_t14}}
\centering
\begin{tabular}{ll |rrrrrrrrrr}
  \hline
 $k$ & $\alpha$ $\vert$ $n$  & \multicolumn{1}{l}{    25} & \multicolumn{1}{l}{    50} & \multicolumn{1}{l}{    75} & \multicolumn{1}{l}{   100} & \multicolumn{1}{l}{   150} & \multicolumn{1}{l}{   200} & \multicolumn{1}{l}{   250} & \multicolumn{1}{l}{   500} & \multicolumn{1}{l}{  1000} & \multicolumn{1}{l}{  2000} \\ 
   \hline
0 & 0            & 0.207 & 0.151 & 0.125 & 0.109 & 0.090 & 0.078 & 0.070 & 0.050 & 0.036 & 0.025 \\ 
    & 0.1          & 0.213 & 0.155 & 0.128 & 0.112 & 0.092 & 0.080 & 0.072 & 0.051 & 0.037 & 0.026 \\ 
    & 0.3          & 0.270 & 0.197 & 0.163 & 0.142 & 0.117 & 0.102 & 0.091 & 0.065 & 0.046 & 0.033 \\ 
    & 0.5          & 0.444 & 0.325 & 0.270 & 0.235 & 0.194 & 0.168 & 0.151 & 0.108 & 0.076 & 0.054 \\ 
    & 0.7          & 1.072 & 0.798 & 0.664 & 0.581 & 0.479 & 0.417 & 0.375 & 0.267 & 0.189 & 0.134 \\ 
  1 & 0            & 0.062 & 0.034 & 0.023 & 0.018 & 0.012 & 0.009 & 0.007 & 0.004 & 0.002 & 0.001 \\ 
    & 0.1          & 0.092 & 0.061 & 0.048 & 0.041 & 0.033 & 0.028 & 0.025 & 0.018 & 0.012 & 0.009 \\ 
    & 0.3          & 0.234 & 0.169 & 0.139 & 0.121 & 0.099 & 0.086 & 0.077 & 0.055 & 0.039 & 0.027 \\ 
    & 0.5          & 0.483 & 0.351 & 0.289 & 0.252 & 0.207 & 0.179 & 0.161 & 0.114 & 0.081 & 0.057 \\ 
    & 0.7          & 1.207 & 0.888 & 0.736 & 0.643 & 0.529 & 0.460 & 0.412 & 0.293 & 0.208 & 0.147 \\ 
  2 & 0            & 0.074 & 0.039 & 0.027 & 0.020 & 0.014 & 0.010 & 0.008 & 0.004 & 0.002 & 0.001 \\ 
    & 0.1          & 0.078 & 0.042 & 0.029 & 0.022 & 0.016 & 0.012 & 0.010 & 0.005 & 0.003 & 0.002 \\ 
    & 0.3          & 0.156 & 0.106 & 0.085 & 0.073 & 0.059 & 0.051 & 0.045 & 0.032 & 0.022 & 0.016 \\ 
    & 0.5          & 0.428 & 0.311 & 0.257 & 0.224 & 0.184 & 0.159 & 0.143 & 0.101 & 0.072 & 0.051 \\ 
    & 0.7          & 1.257 & 0.924 & 0.765 & 0.667 & 0.549 & 0.477 & 0.427 & 0.304 & 0.215 & 0.152 \\ 
   \hline
\end{tabular} 
\end{table}

\end{document}